\newlist{steps}{enumerate}{1}
\newcommand{\xMapsto}[2][]{\ext@arrow 0599{\Mapstofill@}{#1}{#2}}
\def\Mapstofill@{\arrowfill@{\Mapstochar\Relbar}\Relbar\Rightarrow}
\setlist[steps, 1]{label = Step \arabic*:}
\theoremstyle{plain}
\newtheorem*{theorem*}{Theorem}
\newtheorem{theorem}{Theorem}[subsection]
\newtheorem{definition}[theorem]{Definition}
\newtheorem{proposition}[theorem]{Proposition}
\newtheorem{remark}[theorem]{Remark}
\newtheorem{corollary}[theorem]{Corollary}
\newtheorem*{corollary*}{Corollary}
\newtheorem*{proposition*}{Proposition}
\newtheorem{definition*}{Definition}
\newtheorem{exmp}[theorem]{Example}
\newtheorem{lemma}[theorem]{Lemma}
\newtheorem*{lemma*}{Lemma}
\numberwithin{equation}{subsection}
\def\acts{\mathrel{\reflectbox{$\righttoleftarrow$}}}
\title{On an Axiomatization of Path Integral Quantization and its Equivalence to Berezin's Quantization}
\author{Joshua Lackman\footnote{josh@pku.edu.cn}}
\date{}
\begin{document}
\maketitle
\begin{abstract}
\noindent 
We axiomatize path integral quantization of symplectic manifolds. We prove that this path integral formulation of quantization is equivalent to an abstract operator formulation, ie. abstract coherent state (or Berezin) quantization. We use the corresponding path integral of Poisson manifolds to quantize all complete Riemann surfaces of constant non–positive curvature and some Poisson structures on the sphere.
\end{abstract}
\maketitle
\tableofcontents
\section{Introduction}
Let $(\mathcal{L},\nabla,\langle\cdot,\cdot\rangle)\to (M,\omega/\hbar)$ be a prequantum line bundle with Hermitian connection and let $P(\gamma)$ denote parallel transport between the fibers of $\mathcal{L}$ over the curve $\gamma.$ Consider the coherent state path integral, which initially appeared in \cite{klauder0}, \cite{klauder} and most recently was discussed in \cite{brane}:
\begin{equation}\label{paths}
\int_{\gamma(0)=x}^{\gamma(1)=y} P(\gamma)\,\mathcal{D}\gamma\;\in \textup{Hom}(\mathcal{L}_x,\mathcal{L}_y)\;.
\end{equation}
We give an axiomatic definition of this path integral and show that it is equivalent to an abstract operator formalism, ie. coherent state (or Berezin) quantization (\cite{ber1}). We define the integrand as a formal inverse limit of an inverse system of complex measures on the space of paths. We axiomatize the corresponding path integral for Poisson manifolds as well. 
\\\\As we show, defining such a path integral is equivalent to Berezin's quantization (\cite{poland}), which is a very strong form of quantization. A brief explanation is as follows: $\mathcal{L}\to M$ is determined by a classifying map 
\begin{equation}
\begin{tikzcd}
              & \mathcal{H}\backslash\{0\} \;\;\;\;\;\; \;\;\;\;\;\;\;   \arrow[d,"\pi",shift right=8] \\
q:M \arrow[r] & \textup{P}(\mathcal{H})\subset \textup{B}(\mathcal{H})                
\end{tikzcd}
    \end{equation}
for a separable Hilbert space $\mathcal{H},$ where we are identifying points in $\textup{P}(\mathcal{H})$ with rank–one orthogonal projections in $\textup{B}(\mathcal{H}).$ Roughly, we show that computing \ref{paths} determines a classifying map $q$ with the overcompleteness property
\begin{equation}\label{id}
    \mathds{1}_{\mathcal{H}}=\int_M q(x)\,\frac{\omega_x^n}{\hbar^{n}}\;.\footnote{More accurately, the integration is with respect to a measure which is equal to $\omega^n/\hbar^n+\mathcal{O}(1/\hbar^{n-1}).$}
\end{equation}
In particular, this identifies $\mathcal{H}$ as the physical Hilbert space. In the other direction, it is a simple observation that given such a $q,$ \ref{paths} is the canonical projection map $\mathcal{L}_x\to\mathcal{L}_y$ of the associated line bundle, which a physicist would write as $|x\rangle \to |y\rangle\langle y|x\rangle.$
\\\\As a brief review of the rest of Berezin's quantization, from \cref{id} it follows that there is an identity–preserving quantization map which restricts to a map into the Hilbert–Schmidt operators, given by
\begin{equation}
   Q\vert_{L^2}:L^2(M)\to \textup{B}(\mathcal{H})_{\textup{HS}}\;,\;\; Q_f:=\int_M f(x)q(x)\,\frac{\omega^n_x}{\hbar^n}\;.
\end{equation}
This is Berezin's contravariant symbol. In the physics literature, this would be  written as
\begin{equation}
    Q_f=\int_M f(z)|z\rangle\langle z|\,\frac{\omega^n_z}{\hbar^n}\;.
\end{equation}
Assuming $q(M)$ has enough states, the adjoint map (Berezin's covariant symbol)
\begin{equation}
    Q^{\dagger}:\textup{B}(\mathcal{H})_{\textup{HS}}\to L^2(M)\cap C^{\infty}(M)
\end{equation}
is injective and thus a dense subspace of $(\ker{Q\vert_{L^2}})^{\perp}$ inherits a noncommutative product.
\\\\
For symplectic manifolds, these ideas have been implemented by choosing a compatible almost complex structure — these have been used to quantize a very large class of prequantizable symplectic manifolds, including all compact ones (\cite{bor},\cite{kor}). By contrast, no such result can exist for the Kostant–Souriau prescription. On the other hand, a general description of a quantization map for Poisson manifolds has been elusive. The corresponding path integral for Poisson manifolds still has the completeness property, but $q$ doesn't necessarily map into $\textup{P}(\mathcal{H}),$ so it can quantize some non–prequantizable symplectic manifolds as well, eg. all complete Riemann surfaces of constant non–positive curvature.
\\\\
In conventional quantum mechanics, $q(x,p)$ is the projection map onto the eigenstate of the lowering operator $\hat{x}+i\hat{p}$ for which $\langle\hat{x}\rangle =x,\,\langle\hat{p}\rangle=p.$ This is a coherent state and it minimizes the uncertainty principle. The corresponding noncommutative product is the non-formal Wick algebra, ie. the normal–ordered product.
\\\\For the Berezin–Toeplitz quantization of compact K\"{a}her manifolds (\cite{bord}, \cite{cahen}, \cite{mar}), $q$ is given by a normalized Kodaira embedding. In the case of $S^2\xhookrightarrow{}\mathbb{R}^3,$ the image of $Q^{\dagger}$ is the space of polynomials in the coordinate functions of degree at most $\textup{deg}(\mathcal{L})$. 
Another source of examples of such quantizations comes from irreducible unitary representations of Lie groups (\cite{klauder4}). This construction is dual to Kirillov's orbit method.
\\\\We introduce a category of abstract coherent state quantizations — this is an abstraction of the quantization obtained by a map $q$ satisfying \cref{id}. We prove it is equivalent to the category of path integrals, and we compute a path integral for:
\begin{enumerate}
    \item All complete Riemann surfaces of constant curvature $\le 0,$ including quotient stacks of the form $[\mathbb{H}/\Gamma],$ $\Gamma\subset \textup{PSL}(2,\mathbb{R}).$
    \item A Poisson structure with a quartic zero at the north pole of $S^2.$
    \item The Podl\`{e}s sphere, which is an $SU(2)$–invariant Poisson structure on $S^2$ with a quadratic zero at the north pole.
    \end{enumerate}
A $C^*$–algebra for the Podl\`{e}s sphere was described in \cite{pod}, but a quantization map is absent. The algebra we use for a Poisson manifold $(M,\Pi)$ is a subalgebra of sections of the prequantum line bundle over the symplectic groupoid, and in nice cases, this subalgebra embeds as a vector space into $C^{\infty}(M).$ This work is in the same vein as the Poisson sigma model and the symplectic groupoid approach to quantization, \cite{bon}, \cite{catt}, \cite{eli}, \cite{weinstein}, \cite{weinstein1}.
\\\\ Some coherent state path integrals were rigorously constructed  using Brownian motion in \cite{klauder}, \cite{klauder2}, by Daubechies and Klauder.\footnote{For related work concerning time evolution, see \cite{charles}.} We take a different approach — since the usual practice of defining a path integral depends on the approximation scheme and is thus ill–defined, we instead define the entire category of path integrals. Essentially, this category contains the limits of all finite dimensional approximation schemes to \cref{paths}.  

\section{Abstract Coherent State Quantization}
We introduce an abstract definition of coherent state quantization for symplectic manifolds; this is an abstraction of the quantization scheme described by Berezin (\cite{ber1}, \cite{sawin}). We will show that the category of path integral quantizations is equivalent to category of abstract coherent state quantizations. We can get more general quantizations by relaxing the projection axiom, eg. some non–prequantizable symplectic manifolds.
\\\\We use $W^*$–algebras. Concretely, these are  weakly–closed $^*$–subalgebras of $\textup{B}(\mathcal{H}),$ ie. von Neumann algebras, whereas $C^*$–algebras are only norm–closed. More details follow.
\begin{definition}\label{maind}
An abstract coherent state quantization of a connected manifold with a Borel measure $(M,d\mu)$ is given by a continuous injection into a $W^*$–algebra
\begin{equation}
q:M\xhookrightarrow{} M_{\hbar}
\end{equation}
such that:
\begin{enumerate}
    \item (projection axiom) $q(x)$ is a minimal projection for all $x\in M,$ ie.
\begin{equation}
    q(x)\ne 0\;,\;\;q(x)^2=q(x)=q(x)^*\;,\;\;q(x)M_{\hbar}q(x)=\mathbb{C}q(x)\;.
    \end{equation}
    \item (overcompleteness axiom) In the weak sense,
    \begin{equation}
        \mathds{1}=\int_M q(x)\,d\mu(x)\;.
    \end{equation} 
     \item[3*.](separation axiom) We say that an abstract coherent state quantization has enough states if $q(x)Aq(x)=0$ for all $x\in M$ implies that $A=0.$
     \end{enumerate}
     \end{definition}
\begin{exmp}\label{proto}
The simplest and prototypical example of such a quantization is given by the canonical inclusion $\mathbb{C}\textup{P}^n\xhookrightarrow{q}\textup{B}(\mathbb{C}^{n+1}),$ taking a point in $\mathbb{C}\textup{P}^n$ to the orthogonal projection onto its corresponding subspace in $\mathbb{C}^{n+1}.$ Here, 
\begin{equation}\label{measf}
  d\mu=\frac{(n+1)}{\pi^n}\omega^n_{\textup{FS}}\;.  
\end{equation}
That the overcompleteness axiom holds can be checked directly, 
and that the separation axiom holds follows from the basic fact that on a complex Hilbert space, $\langle v,Av\rangle=0$ for all $v$ implies that $A=0.$ 
\\\\Every $f\in L^{2}(\mathbb{C}\textup{P}^n)$ determines an operator $Q_f\in \textup{B}(\mathbb{C}^{n+1}),$ given by
\begin{equation}
    Q_f=\frac{(n+1)}{\pi^n}\int_{\mathbb{C}\textup{P}^n}fq\,\omega^n_{\textup{FS}}\;.
\end{equation}
The adjoint to $Q$ with respect to the Hilbert–Schmidt inner product is given by $A\mapsto Q_A^{\dagger},$ where
\begin{equation}
    Q^{\dagger}_A([x])=\langle x,Ax\rangle
\end{equation}
for any normalized $x\in [x]\in \mathbb{C}\textup{P}^n.$ Since $Q^{\dagger}$ is injective its image inherits a noncommutative product, which in this case is such that (\cite{rok})
\begin{equation}
  [Q^{\dagger}_A,Q^{\dagger}_B]=i\{Q^{\dagger}_A,Q^{\dagger}_B\}\;. 
\end{equation}
It's worth emphasizing that this means that $Q^{\dagger}_{[A,B]}=i\{Q^{\dagger}_A,Q^{\dagger}_B\},$ and in particular, the quantum and classical equations of motion are equivalent.
\end{exmp}
\subsubsection{Basic Theory of $W^*$–Algebras}
We describe the basic theory of $W^*$–Algebras, otherwise known as von Neumann algebras. We use these rather than $C^*$–algebras because the completeness axiom is more natural in this setting. In what follows, we assume that $\mathcal{H}$ is a separable Hilbert space (which may be finite dimensional).
\begin{definition}
A $W^*$–algebra $\mathcal{A}$ is a $C^*$–algebra that admits a predual $\mathcal{A}_*,$ ie. $\mathcal{A}_*$ is a Banach space and $\mathcal{A}\cong\mathcal{A}_*^*.$ The topology on $\mathcal{A}$ is the weak$^*$–topology, called the ultraweak topology.
\end{definition}
Note that, the ultraweak topology is independent of the particular choice of predual.
\begin{exmp}
For a separable Hilbert space $\mathcal{H},$ $\textup{B}(\mathcal{H})$ is a $W^*$–algebra whose predual is the trace–class operators.
\end{exmp}
\begin{theorem}
Every $W^*$–algebra faithfully embeds as a weakly closed subspace of $\textup{B}(\mathcal{H}),$ for some Hilbert space $\mathcal{H}.$
\end{theorem}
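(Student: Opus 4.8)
This is Sakai's abstract characterization of von Neumann algebras, so the plan is to build the \emph{universal normal representation} of $\mathcal{A}$ by applying the GNS construction to every normal state and taking a direct sum. First I would identify the relevant functionals. Since $\mathcal{A}\cong(\mathcal{A}_*)^*$, the elements of the predual $\mathcal{A}_*$, viewed inside $\mathcal{A}^*$, are precisely the ultraweakly continuous (normal) functionals, and by the very definition of a predual they separate the points of $\mathcal{A}$. I would then reduce to positive functionals: using the Jordan decomposition of a self-adjoint normal functional together with the polar decomposition of a general one, one checks that the \emph{normal states} (the positive, norm-one elements of $\mathcal{A}_*$) already separate points, i.e. for every $a\neq 0$ there is a normal state $\phi$ with $\phi(a^*a)>0$.

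Next, for each normal state $\phi$ I would form the GNS triple $(\mathcal{H}_\phi,\pi_\phi,\xi_\phi)$ and verify that $\pi_\phi$ is a \emph{normal} representation, i.e. continuous from the ultraweak topology to the weak operator topology. The key observation is that the vector functionals $b\mapsto\langle\pi_\phi(b)\eta,\eta\rangle$ are normal: for $\eta=\pi_\phi(a)\xi_\phi$ this functional is $b\mapsto\phi(a^*ba)$, which is normal because $\phi\in\mathcal{A}_*$ and left/right multiplication by a fixed element is ultraweakly continuous. Since the normal functionals form a norm-closed subspace of $\mathcal{A}^*$ and $\pi_\phi(\mathcal{A})\xi_\phi$ is dense in $\mathcal{H}_\phi$, the claim passes to all $\eta$, and continuity of $\pi_\phi$ on the unit ball follows because the weak operator topology is determined by these vector functionals.

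I would then set $\pi=\bigoplus_\phi\pi_\phi$ acting on $\mathcal{H}=\bigoplus_\phi\mathcal{H}_\phi$, the sum ranging over all normal states. This representation is faithful, because the normal states separate points, and it is normal, since each summand is and normality is inherited by direct sums on bounded sets. As a faithful $^*$-homomorphism of $C^*$-algebras it is isometric, so writing $\mathcal{A}_1$ for the closed unit ball we have $\pi(\mathcal{A}_1)=\pi(\mathcal{A})\cap\textup{B}(\mathcal{H})_1$. It remains to prove that $\pi(\mathcal{A})$ is weakly closed in $\textup{B}(\mathcal{H})$, which is the main obstacle.

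For the closedness I would argue as follows. Since $\mathcal{A}=(\mathcal{A}_*)^*$, Banach--Alaoglu makes $\mathcal{A}_1$ ultraweakly compact, so its continuous image $\pi(\mathcal{A}_1)$ is ultraweakly compact, hence ultraweakly closed, in $\textup{B}(\mathcal{H})$. By the Krein--\v{S}mulian theorem, a subspace of the dual Banach space $\textup{B}(\mathcal{H})=(\textup{B}(\mathcal{H})_*)^*$ is ultraweakly closed as soon as its intersection with every ball is ultraweakly closed; by scaling it suffices to treat the unit ball, and there the intersection is exactly $\pi(\mathcal{A}_1)$. Hence $\pi(\mathcal{A})$ is ultraweakly closed. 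Finally, on bounded subsets of $\textup{B}(\mathcal{H})$ the ultraweak and weak operator topologies coincide, so $\pi(\mathcal{A})$ is weakly closed, and $\pi$ is the desired faithful embedding. I expect the two delicate points to be the normality of the GNS representations and, above all, this last step, where the Krein--\v{S}mulian theorem is used to upgrade compactness of the image of the unit ball into closedness of the whole image.
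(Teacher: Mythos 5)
The paper does not prove this statement at all: it is quoted as standard background in the review of $W^*$--algebras (it is Sakai's theorem, and the paper points the reader to the von Neumann algebra literature such as \cite{jones}), so there is no in-paper argument to compare against. Your proposal is the standard textbook proof --- separate points by normal states via the Jordan/polar decomposition of normal functionals, form the universal normal GNS representation, and upgrade ultraweak compactness of $\pi(\mathcal{A}_1)$ to ultraweak closedness of $\pi(\mathcal{A})$ by Krein--\v{S}mulian --- and it is essentially sound; the two points you single out as delicate (normality of the GNS representations and the Krein--\v{S}mulian step) are indeed where the real work lies, and your treatment of both is correct in outline.

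One step is stated too quickly at the very end. From ``$\pi(\mathcal{A})$ is ultraweakly closed'' you conclude that it is closed in the weak operator topology by remarking that the two topologies agree on bounded sets. That inference does not go through directly: a WOT-convergent net in $\pi(\mathcal{A})$ need not be bounded, so agreement of the topologies on bounded sets does not by itself transfer closedness from the finer (ultraweak) topology to the coarser (WOT) one. The conclusion is nevertheless true for a unital $^*$-subalgebra: either invoke the bicommutant theorem together with the Kaplansky density theorem (the unit ball of $\pi(\mathcal{A})$ is ultraweakly closed and SOT-dense in the unit ball of the WOT-closure, forcing the two algebras to coincide), or simply note that for the purposes of this paper ``weakly closed'' is being used in the sense of the ultraweak topology, which the paper introduces immediately afterwards, in which case your Krein--\v{S}mulian step already finishes the proof.
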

Therefore, when discussing a $W^*$–algebra $\mathcal{A},$ without loss of generality we may assume $\mathcal{A}\subset \textup{B}(\mathcal{H}).$ However, the topology is a bit different than might be expected.
\begin{definition}
Let $\mathcal{A}\subset \textup{B}(\mathcal{H})$ be a weakly closed subspace. The ultraweak topology is the topology such that $A_n\xrightarrow[]{n\to\infty}A$ if for all $x_i\in\mathcal{H},\,i=1,\ldots,$ such that $\sum_{i=1}^{\infty}\|x_i\|^2<\infty,$
\begin{equation}
    \sum_{i=1}^{\infty}\langle x_i,A_nx_i\rangle\xrightarrow[]{n\to\infty} \sum_{i=1}^{\infty}\langle x_i,Ax_i\rangle\;.
\end{equation}
\end{definition}
Note that, if $A_n\to A$ in the ultraweak topology then $A_n\to A$ in the weak topology.
\\\\The following result will be used to show that the concrete definition of coherent state (or Berezin) quantization is equivalent to the abstract one. It is theorem 4.2.1 in \cite{jones}, and it says that type 1 factors are isomorphic to $B(\mathcal{H})$:
\begin{lemma}\label{cen}
If a $W^*$–algebra $\mathcal{A}$ has a non–zero minimal projection and its center contains only multiples of the identity, then $A\cong \textup{B}(\mathcal{H})$ for some Hilbert space $\mathcal{H}.$
\end{lemma}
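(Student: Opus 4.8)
The statement is the classification of type I factors, and I would prove it by manufacturing a system of matrix units and identifying $\mathcal{A}$ with the bounded operators on $\ell^2$ of a suitable index set. Write $e$ for the given nonzero minimal projection, so by hypothesis $e\mathcal{A}e=\mathbb{C}e$. The first ingredient is the comparison theory of projections, which I would invoke in the form: in a factor (a $W^*$--algebra with trivial center) any two projections $p,q$ are comparable, i.e. there is a partial isometry $v\in\mathcal{A}$ with $v^*v=p$ and $vv^*\le q$, or the same with $p$ and $q$ interchanged. From this I would first record that a nonzero subprojection of $e$ must equal $e$ (minimality), and that for a partial isometry $v^*v=e$ the range projection $vv^*$ is again minimal; combined with comparison this shows any minimal projection is Murray--von Neumann equivalent to $e$.

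The second block builds an orthogonal decomposition of the identity into copies of $e$. Using Zorn's lemma I would choose a maximal family $\{e_i\}_{i\in I}$ of mutually orthogonal projections each equivalent to $e$, and set $p=\sum_i e_i$ (strong limit). I claim $p=\mathds{1}$: otherwise $\mathds{1}-p\ne 0$, and comparison applied to $e$ and $\mathds{1}-p$ forces either a subprojection of $\mathds{1}-p$ equivalent to $e$, contradicting maximality, or $\mathds{1}-p$ equivalent to a subprojection of $e$, which by minimality is $0$ or $e$ — the first case gives $\mathds{1}-p=0$ (a partial isometry with zero range projection is zero), the second again contradicts maximality. Separability of $\mathcal{H}$ guarantees $I$ is at most countable.

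Fixing an index $0\in I$ with $e_0=e$ and partial isometries $v_i$ satisfying $v_i^*v_i=e_0$ and $v_iv_i^*=e_i$, I would set $e_{ij}=v_iv_j^*$ and verify directly that $\{e_{ij}\}$ is a system of matrix units: $e_{ij}e_{kl}=\delta_{jk}e_{il}$, $e_{ij}^*=e_{ji}$, and $\sum_i e_{ii}=\mathds{1}$. The place where minimality re-enters is the compression identity: for any $a\in\mathcal{A}$ one has $v_i^*(e_i a e_j)v_j\in e_0\mathcal{A}e_0=\mathbb{C}e_0$, hence $e_iae_j=\lambda_{ij}(a)\,e_{ij}$ for scalars $\lambda_{ij}(a)$. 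Since $\sum_i e_i=\mathds{1}$ yields $a=\sum_{i,j}e_iae_j$ ultraweakly, every element is the ultraweak sum $\sum_{ij}\lambda_{ij}(a)e_{ij}$, so $\mathcal{A}$ is precisely the von Neumann algebra generated by the matrix units. Sending $e_{ij}$ to the rank-one operator $|i\rangle\langle j|$ on $\mathcal{H}:=\ell^2(I)$ extends to the desired $*$--isomorphism $\mathcal{A}\cong\textup{B}(\mathcal{H})$, with $\dim\mathcal{H}=|I|$.

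The main obstacle is the comparison theory itself, i.e. the fact that projections in a factor are totally ordered under Murray--von Neumann subequivalence; this is the genuinely nontrivial input (it rests on the comparison theorem via central support projections), while the remaining arguments are essentially bookkeeping. A secondary technical point is justifying the convergence of $\sum_i e_i$ to $\mathds{1}$ and of the matrix expansion of $a$ in the ultraweak topology, where the countability of $I$ coming from separability, together with the explicit description of the ultraweak topology given above, are what make these manipulations legitimate.
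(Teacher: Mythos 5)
The paper does not actually prove this lemma: it is quoted as Theorem 4.2.1 of the cited notes of Jones, so there is no in-paper argument to compare against. Your proof is correct and is the standard one — comparison of projections in a factor, a maximal orthogonal family of copies of the minimal projection exhausting the identity, matrix units $e_{ij}=v_iv_j^*$, and the compression identity $e_iae_j=\lambda_{ij}(a)e_{ij}$ forced by minimality — which is essentially the argument of the cited reference. One minor remark: separability is not part of the hypothesis and is not needed; an orthogonal family of projections sums strongly as a net for any index set $I$, and the matrix-unit expansion then gives $\mathcal{A}\cong\textup{B}(\ell^2(I))$ in full generality.
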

\subsection{Basic Theory of Abstract Coherent State Quantization}
Here we describe the basic theory of abstract coherent state quantization. As we will see, without loss of generatlity one can assume that $M_{\hbar}=\textup{B}(\mathcal{H})$ for some Hilbert space $\mathcal{H}.$ Given this, most results of this part and \cref{adjs} are standard (\cite{poland}) and we will go through them quickly. In \cref{3point} we discuss the cohomology class associated to such a quantization.
\\\\First, we describe the meaning of the overcompleteness axiom. Let $M_{\hbar*}\subset M_{\hbar}^*$ be the predual of $M_{\hbar}.$ The overcompleteness axiom is equivalent to: for all $s\in M_{\hbar*},$ the map $x\mapsto s(q_x)$ is in $L^1(M,d\mu)$ and
\begin{equation}
    s(\mathds{1})=\int_M s(q_x)\,d\mu(x)\;.
\end{equation}
\begin{definition}
We have a continuous, $^*$-linear map $\rho:M_{\hbar}\times M\to \mathbb{C}$ defined by
\begin{equation}
   q_xAq_x=\rho_A(x)q_x\;.
\end{equation}
\end{definition}
In particular, $\rho_{\mathds{1}}(x)=1$ or all $x.$
\begin{lemma}\label{important}
There exists a Hilbert space $\mathcal{H}$ such that $M_{\hbar}\cong \textup{B}(\mathcal{H}).$
\end{lemma}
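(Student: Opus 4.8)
The plan is to invoke \cref{cen}, which says that a $W^*$–algebra with a nonzero minimal projection and trivial center (a type~I factor) is isomorphic to $\textup{B}(\mathcal{H})$. So the entire task reduces to verifying the two hypotheses of that lemma for $M_\hbar$. The projection axiom immediately supplies the nonzero minimal projections: each $q(x)$ is by assumption a minimal projection, so the first hypothesis is free. The real content is showing that the center $Z(M_\hbar)$ consists only of scalar multiples of $\mathds{1}$.

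First I would set up the center computation using the map $\rho$ from the preceding definition, characterized by $q_x A q_x = \rho_A(x) q_x$. Let $A \in Z(M_\hbar)$ be central and self-adjoint (it suffices to treat self-adjoint elements, then use $A = \tfrac{1}{2}(A+A^*) + \tfrac{i}{2i}(A-A^*)$, noting the center is a $^*$-subalgebra). The key idea is that a central element commutes with every projection $q(x)$, so $A q_x = q_x A q_x = \rho_A(x) q_x$; that is, $q_x$ is an eigen-projection of $A$ with eigenvalue $\rho_A(x)$. The plan is then to show $\rho_A$ is constant on $M$. Continuity of $\rho$ together with connectedness of $M$ is the geometric input here: $\rho_A : M \to \mathbb{C}$ is continuous (by the stated continuity of $\rho$), so if I can show its image is discrete or that it is locally constant, connectedness forces it to be constant.

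To see local constancy, I would use the overcompleteness axiom to pin down the value of the constant and to rule out nonconstant behavior. Here is the cleanest route I foresee. For a central $A$, I want to show $A = \lambda \mathds{1}$ where $\lambda = \rho_A(x)$ for any $x$. Consider $B := A - \rho_A(x_0)\mathds{1}$ for a fixed basepoint $x_0$; then $B$ is central and $\rho_B(x_0) = 0$, so $q_{x_0} B q_{x_0} = 0$, and by centrality $B q_{x_0} = 0$. The goal becomes showing $B = 0$. Pairing against the overcompleteness identity in the weak sense, for any state or predual element I can write expressions of the form $\int_M \rho_B(x)\,\langle \cdots \rangle\,d\mu(x)$; combined with the fact that $B$ is central and hence acts as the scalar $\rho_B(x)$ on the range of each $q_x$, the overcompleteness $\mathds{1} = \int_M q_x\,d\mu$ lets me express $B = B\cdot\mathds{1} = \int_M B q_x\,d\mu = \int_M \rho_B(x) q_x\,d\mu$. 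Since $\rho_B$ is continuous and, as I argue from connectedness, identically zero (it vanishes at $x_0$ and cannot jump on a connected space when each $q_x$ is a minimal projection eigen-projection of the single operator $B$), we get $B=0$, hence $A = \rho_A(x_0)\mathds{1}$.

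The step I expect to be the main obstacle is rigorously establishing that $\rho_A$ is constant, i.e.\ converting continuity plus connectedness into genuine constancy. The subtlety is that a priori different points $x$ could lie in different spectral subspaces of $A$, and I must rule out a continuously varying eigenvalue; this is precisely where I must use that the $q_x$ are \emph{minimal} projections and that $A$ is central, so that the spectral projections of $A$ are themselves central and each $q_x$ lies entirely within one spectral subspace. Continuity of $x \mapsto \rho_A(x)$ then means the assignment of spectral subspaces is locally constant (the spectrum of a self-adjoint $A$ being closed, eigenvalues are isolated among the values actually attained on the $q_x$ when one works within the central algebra), and connectedness finishes the argument. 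Handling this cleanly — making precise why the image $\rho_A(M)$ is a single point rather than an interval — is the crux, and I would address it by noting that a central self-adjoint element with a continuum of eigenvalues realized by minimal projections would contradict either the minimality of the $q_x$ or continuity, forcing $\rho_A(M)$ to be a single eigenvalue. With constancy in hand, $Z(M_\hbar) = \mathbb{C}\mathds{1}$, and \cref{cen} yields $M_\hbar \cong \textup{B}(\mathcal{H})$.
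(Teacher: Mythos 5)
Your architecture is the same as the paper's: reduce to \cref{cen} (the projection axiom supplies the nonzero minimal projection for free), show that a central $A$ satisfies $Aq_x=\rho_A(x)q_x$, prove $\rho_A$ is constant, and conclude via overcompleteness that $A=\int_M Aq_x\,d\mu=\rho_A\mathds{1}$. The gap is precisely in the step you yourself flag as the crux: you never actually prove that $\rho_A$ is constant. Your proposed justification --- that a central self-adjoint element with a continuum of eigenvalues realized by minimal projections ``would contradict either the minimality of the $q_x$ or continuity,'' and that the attained eigenvalues are ``isolated'' --- is not an argument, and as a standalone claim it is false: in $\bigoplus_{t}\textup{B}(\mathcal{H}_t)$ the element $\bigoplus_t\lambda_t\mathds{1}_t$ is central and self-adjoint, and every $\lambda_t$ is realized as an eigenvalue on a minimal projection, with $\{\lambda_t\}$ as large as you like. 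Minimality alone, and continuity of the single scalar function $\rho_A$, are perfectly consistent with $\rho_A(M)$ being an interval. What rules this out is an interaction between \emph{different} points of $M$ through the algebra, and that ingredient is absent from your sketch.

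The paper supplies it as follows: for central $A$, $\rho_A(x)\,q_xBq_y=Aq_xBq_y=q_xB(Aq_y)=\rho_A(y)\,q_xBq_y$ for every $B$, so $\rho_A(x)=\rho_A(y)$ whenever $q_xBq_y\ne 0$ for some $B$; one then shows that $S_x=\{y\in M:\exists B,\ q_xBq_y\ne 0\}$ is nonempty (it contains $x$), open (by continuity of $q$), and closed (if $q_xBq_y=0$ for all $B$ and $z$ is close enough to $y$ that $\rho_{q_y}(z)\ne 0$, then $\rho_{q_y}(z)\,q_xBq_z=q_x(Bq_z)q_yq_z=0$ forces $q_xBq_z=0$ for all $B$), so connectedness gives $S_x=M$ and $\rho_A$ is constant. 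Your spectral-projection heuristic can be made rigorous, but only by an argument of the same clopen type: for a central projection $z$, minimality gives $zq_x\in\{0,q_x\}$ for each $x$, ultraweak continuity of $x\mapsto q_x$ and multiplication by $z$ makes both $\{x:zq_x=q_x\}$ and $\{x:zq_x=0\}$ closed, connectedness makes one of them all of $M$, and overcompleteness then forces $z=\mathds{1}$ or $z=0$. Either way, the local-constancy step has to be written down; asserting that nonconstancy ``would contradict minimality or continuity'' does not discharge it.
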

\begin{proof}
Suppose that $A\in M_{\hbar}$ is in the center. By \cref{cen}, it is enough to show that $A$ is a multiple of the identity. We have that
\begin{equation}
    Aq_x=Aq_x^2=q_xAq_x=\rho_A(x) q_x\;.
\end{equation}
This implies that for any $x,y\in M$ and $B\in M_{\hbar},$
\begin{equation}
  \rho_A(x)q_xBq_y=\rho_A(y)q_xBq_y\;.
\end{equation}
Therefore, if $q_xBq_y\ne 0$ it follows that $\rho_A(x)=\rho_A(y).$ Fix $x\in M.$ We claim that 
\begin{equation}\label{set}
    \{y\in M:\textup{there exists }B\in M_{\hbar}\textup{ such that }q_xBq_y\ne 0\}=M\;.
\end{equation}
This set is nonempty because it contains $x.$ It is open because $q$ is continuous and if $q_xBq_y\ne 0$ then there exists an open neighborhood $U\ni q_y$ such that for all $C\in U,$ $q_xBC\ne 0.$ It is closed because its complement is open: suppose $q_y$ is such that $q_xBq_y=0$ for all $B,$ and let $U\ni y$ be an open neighborhood such that for all $z\in U,$ $q_yq_z\ne 0.$ This implies that $\rho_{q_y}(z)\ne 0.$ By assumption, it must be true that 
\begin{equation}
  q_x(Bq_z)q_y=0\;.  
\end{equation}
Therefore,
\begin{equation}
\rho_{q_y}(z)q_xBq_z=q_x(Bq_z)q_yq_z=0\;,
\end{equation}
from which we deduce that $q_xBq_z=0$ for all $B$ as well. This completes the proof of \cref{set}, which implies that $x\mapsto \rho_A(x)$ is constant. By the overcompleteness axiom it follows that $A=\rho_{A}\mathds{1},$ and this completes the proof.
\end{proof}
\begin{proposition}\label{trace}
Let $\mathcal{H}$ be such that $\textup{B}(\mathcal{H})\cong M_{\hbar}.$ Then $\textup{dim}\,\mathcal{H}=\textup{Vol}_{\mu}(M).$
\end{proposition}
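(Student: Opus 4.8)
The plan is to compute $\dim\mathcal{H}$ by testing the overcompleteness identity against the vector states coming from an orthonormal basis of $\mathcal{H}$ and reassembling a trace. By \cref{important} we may take $M_{\hbar}=\textup{B}(\mathcal{H})$, and under this identification a minimal projection is precisely a rank–one orthogonal projection; hence each $q_x$ is rank one, so $\textup{tr}(q_x)=1$. The point requiring care is that when $\mathcal{H}$ is infinite dimensional the identity $\mathds{1}$ is not trace class, so one cannot literally apply a trace to the overcompleteness axiom $\mathds{1}=\int_M q_x\,d\mu(x)$. Instead I would test against one basis vector at a time and recover the trace by a nonnegativity (Tonelli) argument.

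Concretely, first I would fix an orthonormal basis $\{e_i\}_i$ of $\mathcal{H}$ and consider the vector functionals $s_i(A):=\langle e_i,Ae_i\rangle$, each of which lies in the predual $M_{\hbar*}$ since it is represented by the rank–one trace–class operator $|e_i\rangle\langle e_i|$. Applying the weak form of the overcompleteness axiom recorded just before the definition of $\rho$, namely $s(\mathds{1})=\int_M s(q_x)\,d\mu(x)$ for every $s\in M_{\hbar*}$, to $s=s_i$ gives
\[
1=s_i(\mathds{1})=\int_M \langle e_i,q_xe_i\rangle\,d\mu(x)=\int_M \|q_xe_i\|^2\,d\mu(x),
\]
where the last equality uses $q_x=q_x^*=q_x^2$.

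Next I would sum this identity over $i$. Because every integrand $\|q_xe_i\|^2$ is nonnegative, Tonelli's theorem permits interchanging the countable sum with the integral with no integrability hypothesis, giving
\[
\dim\mathcal{H}=\sum_i 1=\sum_i\int_M \|q_xe_i\|^2\,d\mu(x)=\int_M\Big(\sum_i \|q_xe_i\|^2\Big)\,d\mu(x).
\]
It then remains to evaluate the inner sum pointwise: for fixed $x$ one has $\sum_i\|q_xe_i\|^2=\sum_i\langle e_i,q_xe_i\rangle=\textup{tr}(q_x)$, and since $q_x$ is a rank–one projection this equals $1$. Substituting yields $\dim\mathcal{H}=\int_M 1\,d\mu(x)=\textup{Vol}_{\mu}(M)$, with both sides allowed to be $+\infty$ and shown to be simultaneously finite or infinite.

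The only genuinely delicate point is the legitimacy of exchanging summation and integration in the infinite–dimensional case; this is exactly why I route the computation through Tonelli applied to the manifestly nonnegative quantities $\|q_xe_i\|^2$, rather than attempting to interpret $\textup{tr}(\mathds{1})$ directly. Everything else — that minimal projections in $\textup{B}(\mathcal{H})$ are rank one, and that a rank–one projection has unit trace — is standard $W^*$–algebra bookkeeping.
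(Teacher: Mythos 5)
Your proof is correct and takes essentially the same route as the paper, whose entire argument is ``using the overcompleteness axiom and taking the trace''; your version merely supplies the justification the paper leaves implicit, by testing against the normal vector states $s_i$ and invoking Tonelli to exchange the sum with the integral. The added care about the infinite–dimensional case (where $\mathds{1}$ is not trace class) is a welcome refinement, but the underlying idea is identical.
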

\begin{proof}
Using the overcompleteness axiom and taking the trace, it follows that
\begin{equation}
\textup{Tr}(\mathds{1}_{\mathcal{H}})=\int_M \,d\mu=\textup{Vol}_{\mu}(M)\;.
\end{equation}
\end{proof}
\begin{proposition}
Let $s\in M_{\hbar*}$ and let
\begin{equation}
   f\in \bigcup_{\{1\le p\le \infty\}}L^p(M,d\mu)\;.
\end{equation}
Then $x\mapsto f(x)s(q_x)$ is in $L^1(M,d\mu).$ 
\end{proposition}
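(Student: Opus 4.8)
The plan is to reduce the claim to an interpolation-plus-Hölder argument: first I would establish that the scalar function $g(x):=s(q_x)$ lies in $L^{\infty}(M,d\mu)$, and then combine this with the fact—already recorded above as the equivalent form of the overcompleteness axiom—that $g\in L^{1}(M,d\mu)$. Once $g$ is known to be in $L^{1}\cap L^{\infty}$, everything else is formal.

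For the boundedness step I would argue purely at the level of the $W^*$-algebra, so that no appeal to \cref{important} is needed. Since $s\in M_{\hbar*}\subset M_{\hbar}^{*}$ is in particular a bounded linear functional, we have $|s(A)|\le \|s\|\,\|A\|$ for every $A\in M_{\hbar}$. The projection axiom says each $q_x$ is a nonzero projection, so $\|q_x\|=1$, and therefore
\begin{equation}
|g(x)|=|s(q_x)|\le \|s\|\,\|q_x\|=\|s\|
\end{equation}
uniformly in $x$. Hence $g\in L^{\infty}(M,d\mu)$ with $\|g\|_{\infty}\le\|s\|$.

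Next I would combine the two integrability facts. On any measure space one has $L^{1}\cap L^{\infty}\subseteq L^{r}$ for every $1\le r\le\infty$, via
\begin{equation}
\int_M |g|^{r}\,d\mu\le \|g\|_{\infty}^{\,r-1}\int_M |g|\,d\mu=\|g\|_{\infty}^{\,r-1}\,\|g\|_{1}<\infty
\end{equation}
for $1\le r<\infty$ (the case $r=\infty$ being trivial). Thus $g\in L^{r}(M,d\mu)$ for all $r$. Given $f\in L^{p}(M,d\mu)$ for some $1\le p\le\infty$, let $p'$ denote the conjugate exponent; since $p'\in[1,\infty]$ we have $g\in L^{p'}$, and Hölder's inequality gives
\begin{equation}
\int_M |f(x)\,s(q_x)|\,d\mu(x)\le \|f\|_{p}\,\|g\|_{p'}<\infty,
\end{equation}
so $x\mapsto f(x)s(q_x)$ is in $L^{1}(M,d\mu)$, as claimed.

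There is no serious obstacle here, but the one point that must not be overlooked is that $\textup{Vol}_{\mu}(M)$ may be infinite (by \cref{trace} it equals $\dim\mathcal{H}$), so one cannot shortcut the argument by invoking finiteness of the measure; it is precisely the genuine $L^{1}$-membership of $g$, together with its uniform boundedness, that compensates for this. In particular the endpoint cases $p=1$ (which needs $g\in L^{\infty}$) and $p=\infty$ (which needs $g\in L^{1}$) are each handled directly by one of the two facts above.
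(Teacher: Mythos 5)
Your proof is correct and follows essentially the same route as the paper's: both arguments rest on the uniform bound $|s(q_x)|\le\|s\|\,\|q_x\|=\|s\|$, the $L^1$-membership of $x\mapsto s(q_x)$ coming from the overcompleteness axiom, and an application of H\"{o}lder's inequality. The paper packages the interpolation as the splitting $|s(q_x)|=|s(q_x)|^{1/p}|s(q_x)|^{1/q}$ before applying H\"{o}lder, while you first record $s(q)\in L^1\cap L^{\infty}\subseteq L^{p'}$ and then apply H\"{o}lder in its standard form; these are the same estimate.
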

\begin{proof}
Let $1/p+1/q=1$ and write $|f(x)s(q_x)|=|f(x)||s(q_x)|^{1/p}|s(q_x)|^{1/q}.$ H\"{o}lder's inequality implies that 
\begin{equation}\label{ho}
\|fs(q)\|_1\le \|f|s(q)|^{1/p}\|_p\|s(q)\|_1^{1/q}\;.
\end{equation}
Since $\|q_x\|= 1$ for all $x$ it follows that $s(q)$ is bounded. Since it's also true that $s(q)\in L^1(M,d\mu),$ the result follows.
\end{proof}
\begin{definition}
We define an identity–preserving, continuous $^*$-linear map
\begin{equation}
    Q:\bigoplus_{1\le p\le \infty}L^p(M,d\mu)\to M_{\hbar}\;,\;\;Q_f=\int_M f(x)q_x\,d\mu(x)\;.
\end{equation}
\end{definition}
More generally, $Q$ is definable on functions $f$ such that $fs(q)\in L^1(M,d\mu)$ for all $s\in M_{\hbar*}.$
\begin{proposition}\label{non}
If $M\ne \{*\}$ then $M_{\hbar}$ is noncommutative. 
\end{proposition}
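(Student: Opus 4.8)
The plan is to argue by contradiction, reducing everything to the structure theorem already recorded in \cref{important}. Suppose $M_{\hbar}$ is commutative. By \cref{important} there is a Hilbert space $\mathcal{H}$ with $M_{\hbar}\cong \textup{B}(\mathcal{H}),$ so I would first observe that $\textup{B}(\mathcal{H})$ is commutative exactly when $\textup{dim}\,\mathcal{H}=1.$ Indeed, if $\textup{dim}\,\mathcal{H}\ge 2$ one can pick two orthonormal vectors and check that the associated rank–one operators fail to commute; this holds whether or not $\mathcal{H}$ is finite dimensional, so there is no subtlety in the infinite–dimensional case. Hence commutativity forces $\mathcal{H}\cong\mathbb{C}$ and $M_{\hbar}\cong\mathbb{C}.$

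Next I would feed this back into the axioms. In $\mathbb{C}$ the only self–adjoint idempotents are $0$ and $1,$ and the projection axiom requires $q(x)\ne 0,$ so $q(x)=1$ for every $x\in M.$ Since $q$ is injective by definition of an abstract coherent state quantization, $M$ can contain at most one point, i.e. $M=\{*\}.$ This contradicts the hypothesis $M\ne\{*\},$ and the proof is complete. The essential inputs are thus \cref{important} (which packages the projection and overcompleteness axioms, via \cref{cen}, into the identification $M_{\hbar}\cong\textup{B}(\mathcal{H})$) together with injectivity of $q.$

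I expect no real obstacle here, since the content is absorbed into \cref{important}; the only point to state carefully is the equivalence ``$\textup{B}(\mathcal{H})$ commutative $\iff \textup{dim}\,\mathcal{H}=1$'' for general separable $\mathcal{H}.$ As an alternative that bypasses \cref{important}, one could argue directly: if $M_{\hbar}$ is commutative and $x\ne y,$ then $q_x,q_y$ are commuting minimal projections, so $q_xq_y=q_yq_x$ is a projection dominated by each of the minimal projections $q_x,q_y,$ forcing $q_xq_y\in\{0,q_x\}\cap\{0,q_y\}=\{0\}$ (using $q_x\ne q_y$ from injectivity). Then $\rho_{q_y}(x)=0$ for $x\ne y$ while $\rho_{q_y}(y)=1,$ contradicting the continuity of $x\mapsto\rho_{q_y}(x)$ on a connected $M$ with more than one point. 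I would present the \cref{important} route as the main argument and relegate this direct variant to a remark.
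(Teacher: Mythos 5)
Your proposal is correct, but it takes a genuinely different route from the paper. The paper's own proof is local and elementary: it fixes $x,$ uses continuity of $q$ together with $q_x^2=q_x\ne 0$ to produce some $y\ne x$ with $q_xq_y,\,q_yq_x\ne 0,$ and then shows that commutativity plus minimality forces $q_xq_y=\lambda q_y$ with $\lambda=1$ and, symmetrically, $q_xq_y=q_x,$ whence $q_x=q_y,$ contradicting injectivity. That argument uses only the projection axiom, continuity and injectivity of $q$ — it never invokes the overcompleteness axiom or the identification $M_{\hbar}\cong\textup{B}(\mathcal{H}).$ Your main route instead outsources all the work to \cref{important} (and hence to \cref{cen} and the overcompleteness axiom), after which the reduction to $\textup{dim}\,\mathcal{H}=1$ and the clash with injectivity is immediate; since \cref{important} is proved before this proposition in the paper, there is no circularity, and this is a perfectly clean derivation, just a heavier-weight one. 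Your remark-variant is closer in spirit to the paper's argument but is its global mirror image: you show that commutativity and minimality force $q_xq_y=0$ for \emph{all} $x\ne y$ and then contradict continuity of $x\mapsto\rho_{q_y}(x)$ on a connected $M$ with more than one point, whereas the paper contradicts injectivity at a single nearby pair where the product is known to be nonzero. All three arguments are valid; the paper's buys independence from the structure theorem, yours buys brevity given results already in hand.
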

\begin{proof}
Let $x\in M.$ Since $y\mapsto q_y$ is continuous and $q_x^2=q_x\ne 0,$ it follows that there exists $y\ne x\in M$ such that $q_xq_y,\,q_yq_x\ne 0.$
Suppose $q_xq_y=q_yq_x.$
Then
\begin{equation}
q_xq_y=q_yq_xq_y=\lambda q_y\implies q_xq_y=\lambda q_xq_y\implies\lambda =1\implies q_xq_y=q_y.
\end{equation}
Similarly, it follows that $q_xq_y=q_x,$ therefore $q_x=q_y.$ This contradicts injectivity.
\end{proof}
\subsubsection{The Adjoint to the Quantization Map and the Noncommutative Product}\label{adjs}
The next proposition follows immediately from the definition.
\begin{proposition}
If $(M_{\hbar},q)$ has enough states, then $A\mapsto \rho_A$ is injective.
\end{proposition}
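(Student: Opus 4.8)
The plan is to exploit that the assignment $A\mapsto\rho_A$ is linear in its $M_\hbar$–slot, which reduces injectivity to the assertion that its kernel is trivial, i.e.\ that $\rho_A\equiv 0$ forces $A=0$. Linearity is immediate from the defining relation $q_xAq_x=\rho_A(x)q_x$ together with the fact that $q_x\neq 0$: given $A,B\in M_\hbar$ and scalars $\lambda,\mu$, the identity $q_x(\lambda A+\mu B)q_x=(\lambda\rho_A(x)+\mu\rho_B(x))q_x$ shows $\rho_{\lambda A+\mu B}=\lambda\rho_A+\mu\rho_B$, so it is enough to analyze the kernel of the map.

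First I would suppose $\rho_A$ is the zero function on $M$. Then for every $x\in M$,
\begin{equation}
q_xAq_x=\rho_A(x)\,q_x=0\;.
\end{equation}
This is exactly the hypothesis appearing in the separation axiom (3*). Invoking that $(M_\hbar,q)$ has enough states then yields $A=0$, so the kernel of $A\mapsto\rho_A$ is trivial and the map is injective. No regularity, measurability, or topological input about $\rho$ is required beyond the purely algebraic defining equation, so there is no genuine obstacle here: the statement is a direct unwinding of the definition of $\rho$ against the separation axiom, which is why the content lies entirely in having assumed ``enough states.''
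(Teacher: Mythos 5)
Your argument is correct and matches the paper's, which simply remarks that the proposition "follows immediately from the definition": linearity of $A\mapsto\rho_A$ reduces injectivity to triviality of the kernel, and $\rho_A\equiv 0$ is precisely the hypothesis $q_xAq_x=0$ for all $x$ in the separation axiom. Nothing further is needed.
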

\begin{definition}
Let $\mathcal{H}_S$ denote the vector space of all $A\in M_{\hbar}$ such that
\begin{equation}
    \|A\|_{\mathcal{H}_S}:=\int_M \rho_{A^*A}\,d\mu<\infty\;.
\end{equation}
$\mathcal{H}_S$ is a Hilbert space with inner product
\begin{equation}
    \langle A,B\rangle_{\mathcal{H}_S}=\int_M\rho_{A^*B}\,d\mu\;.
\end{equation}
\end{definition}
If we make an identification $M_{\hbar}\cong \textup{B}(\mathcal{H}),$ then this is just the space of Hilbert–Schmidt operators.
\begin{proposition}
$Q\vert_{L^2}(L^2(M))\subset \mathcal{H}_S$ and $\rho\vert_{\mathcal{H}_S}(\mathcal{H}_S)\subset L^2(M)\cap L^{\infty}(M).$
\end{proposition}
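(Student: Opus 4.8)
The plan is to invoke \cref{important} to fix an identification $M_\hbar\cong\textup{B}(\mathcal{H})$, under which every minimal projection takes the form $q_x=|e_x\rangle\langle e_x|$ for a unit vector $e_x$, the functional $A\mapsto\rho_A(x)=\langle e_x,Ae_x\rangle$ is the evaluation at $x$ of a state $\phi_x:=\rho_{(\cdot)}(x)$, and $\mathcal{H}_S$ is the space of Hilbert--Schmidt operators. With this dictionary the two inclusions decouple into independent estimates, and I would prove them separately. Measurability of the functions of $x$ appearing below is free, since $\rho$ is continuous.

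For $\rho(\mathcal{H}_S)\subset L^2(M)\cap L^\infty(M)$, both memberships are soft. The $L^\infty$ bound is immediate: for fixed $x$ the map $A\mapsto\rho_A(x)$ is a state, hence of norm one, so $|\rho_A(x)|\le\|A\|$ and $\|\rho_A\|_\infty\le\|A\|$ (note $A\in\mathcal{H}_S\subset M_\hbar$ is bounded). The $L^2$ bound comes from the Cauchy--Schwarz inequality for the state $\phi_x$ applied to the pair $(\mathds{1},A)$, together with $\rho_{\mathds{1}}(x)=1$:
\begin{equation}
|\rho_A(x)|^2=|\phi_x(\mathds{1}^*A)|^2\le\phi_x(\mathds{1})\,\phi_x(A^*A)=\rho_{A^*A}(x).
\end{equation}
Integrating over $M$ gives $\|\rho_A\|_2^2\le\int_M\rho_{A^*A}\,d\mu=\|A\|_{\mathcal{H}_S}^2<\infty$, which is exactly the hypothesis $A\in\mathcal{H}_S$.

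For $Q\vert_{L^2}(L^2(M))\subset\mathcal{H}_S$, I would compute the defining quantity $\|Q_f\|_{\mathcal{H}_S}^2=\int_M\rho_{Q_f^*Q_f}\,d\mu$ directly, writing $\rho_{Q_f^*Q_f}(x)=\|Q_fe_x\|^2$ and expanding $Q_fe_x=\int_M f(y)\langle e_y,e_x\rangle e_y\,d\mu(y)$. After interchanging the order of integration, the $x$-integral collapses via the overcompleteness axiom: applied to the predual functional $A\mapsto\langle e_z,Ae_y\rangle$, it gives $\int_M\langle e_z,e_x\rangle\langle e_x,e_y\rangle\,d\mu(x)=\langle e_z,e_y\rangle$, leaving
\begin{equation}
\|Q_f\|_{\mathcal{H}_S}^2=\int_M\int_M\overline{f(y)}\,f(z)\,K(y,z)\,d\mu(y)\,d\mu(z),\qquad K(y,z):=|\langle e_y,e_z\rangle|^2=\textup{Tr}(q_yq_z).
\end{equation}
The kernel $K$ is nonnegative and symmetric, and overcompleteness gives $\int_M K(y,z)\,d\mu(z)=\textup{Tr}(q_y\mathds{1})=1$ for every $y$; thus the integral operator $\mathcal{K}$ with kernel $K$ has unit row and column sums and, by Schur's test, is a contraction on $L^2(M,d\mu)$. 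Hence $\|Q_f\|_{\mathcal{H}_S}^2=\langle f,\mathcal{K}f\rangle_{L^2}\le\|f\|_2^2$, so $Q_f\in\mathcal{H}_S$.

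The one genuinely technical point is the justification of the interchange of integrals in the second display, since a priori I do not know the quantities are finite. I would handle this by first running the computation with $|f|$ in place of $f$: all integrands are then nonnegative, Tonelli's theorem applies unconditionally, and it produces the finite bound $\langle|f|,\mathcal{K}|f|\rangle\le\|f\|_2^2$. This finiteness retroactively licenses Fubini for the complex-valued computation, and the elementary pointwise inequality $|\langle f,\mathcal{K}f\rangle|\le\langle|f|,\mathcal{K}|f|\rangle$ (valid because $K\ge0$) closes the estimate. The remaining ingredients --- the vector-valued expansion of $Q_fe_x$ and the contractivity of a nonnegative kernel with unit row sums --- are routine.
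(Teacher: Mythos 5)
The paper actually states this proposition without proof (it defers to the remark that the results of \cref{adjs} are standard), so there is no in-paper argument to match; your proof is a from-scratch reconstruction. Most of it is correct and follows the expected route: the reduction via \cref{important} to $M_\hbar\cong\textup{B}(\mathcal{H})$ with $q_x=|e_x\rangle\langle e_x|$, the proof of $\rho(\mathcal{H}_S)\subset L^2(M)\cap L^\infty(M)$ via $|\rho_A(x)|\le\|A\|$ and the state Cauchy--Schwarz inequality $|\phi_x(A)|^2\le\phi_x(A^*A)$, and the identification of the kernel $K(y,z)=\textup{Tr}(q_yq_z)$ with unit row and column sums followed by Schur's test are all sound.

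However, the one step you flag as the genuinely technical point is not actually repaired by your proposed fix. Replacing $f$ by $|f|$ does \emph{not} make the triple integrand nonnegative: the integrand of
\begin{equation*}
\int_{M^3}\overline{f(y)}\,f(z)\,\langle e_x,e_y\rangle\langle e_y,e_z\rangle\langle e_z,e_x\rangle\,d\mu(y)\,d\mu(z)\,d\mu(x)
\end{equation*}
contains the factor $\langle e_x,e_y\rangle\langle e_y,e_z\rangle\langle e_z,e_x\rangle=\Delta(x,y,z)$, which is complex-valued (its phase is exactly the $3$--point function whose logarithm carries the Chern class), so Tonelli does not apply ``unconditionally'' and the circularity remains. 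The inner $(y,z)$--integral is absolutely convergent for each fixed $x$ with bound $\|f\|_2^2$ (Cauchy--Schwarz plus $\int_M|\langle e_x,e_y\rangle|^2\,d\mu(y)=1$), but that bound integrates in $x$ to $\|f\|_2^2\,\textup{Vol}_{\mu}(M)$, which is useless precisely when $M$ has infinite volume --- the only case where $L^2\not\subset L^1$ and the issue arises. The standard repair: for $f\in L^1\cap L^2$ the full triple integral is absolutely convergent, since $|\Delta(x,y,z)|\le|\langle e_x,e_y\rangle||\langle e_x,e_z\rangle|$ and $\int_M|\langle e_x,e_y\rangle||\langle e_x,e_z\rangle|\,d\mu(x)\le1$ by Cauchy--Schwarz in $x$, giving the bound $\|f\|_1^2$; Fubini then yields $\|Q_f\|_{\mathcal{H}_S}^2=\langle f,\mathcal{K}f\rangle\le\|f\|_2^2$ on this dense subspace, and the contraction extends to all of $L^2$ by density, because $\|Q_{f_n}-Q_{f_m}\|_{\mathcal{H}_S}\le\|f_n-f_m\|_2$ while $Q_{f_n}\to Q_f$ ultraweakly by dominated convergence in the predual. (Alternatively, interpolate between $Q:L^1\to$ trace class and $Q:L^\infty\to\textup{B}(\mathcal{H})$, both contractions.) If $\textup{Vol}_{\mu}(M)<\infty$ your argument closes as written, since then $L^2\subset L^1$.
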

\begin{lemma}
 $Q\vert_{L^2}$ and $\rho\vert_{\mathcal{H}_S}$ are adjoints. We write $\rho\vert_{\mathcal{H}_S}=Q^{\dagger}.$
 \begin{proof}
\begin{align}
    &\langle A, Q_f\rangle_{\mathcal{H}_S}=\int_M \rho_{A^*Q_f}(x)\,d\mu(x)=\int_{M\times M} f(y)\,\rho_{A^*q_y}(x)\,d\mu(y)\,d\mu(x)
    \\&=\int_{M\times M} f(y)\,\rho_{(Aq_x)^*}(x)\,d\mu(x)\,d\mu(y)=\int_M f(y)\,\rho_{A^*}(y)\,d\mu(y)=\langle \rho_{A},f\rangle_{L^2}\;,
\end{align}
 \end{proof}
 where the third equality follows from $\rho_x(q_yA)=\rho_y(Aq_x)$ and the fact that states are $^*$-linear. The fourth equality follows from $Q_{1}=\mathds{1}.$
\end{lemma}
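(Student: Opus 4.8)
The plan is to verify the single identity $\langle A, Q_f\rangle_{\mathcal{H}_S} = \langle \rho_A, f\rangle_{L^2}$ for all $A \in \mathcal{H}_S$ and $f \in L^2(M,d\mu)$; by the preceding proposition both sides are defined, since $Q_f \in \mathcal{H}_S$ and $\rho_A \in L^2(M) \cap L^\infty(M)$. I would begin by unwinding the left-hand side from the definitions: using the definition of the $\mathcal{H}_S$ inner product, $\langle A, Q_f\rangle_{\mathcal{H}_S} = \int_M \rho_{A^* Q_f}(x)\, d\mu(x)$, and then substituting $Q_f = \int_M f(y) q_y\, d\mu(y)$ and pulling the scalar $f(y)$ and the $y$-integral through the continuous, $^*$-linear map $\rho$ and the algebra multiplication. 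This produces the double integral $\int_{M\times M} f(y)\, \rho_{A^* q_y}(x)\, d\mu(y)\, d\mu(x)$. The interchange of the two integrations is legitimate by Fubini, whose hypotheses are supplied by the earlier H\"older estimate guaranteeing $x \mapsto f(x) s(q_x) \in L^1$ for states $s$.

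The heart of the argument is the symmetry identity $\rho_x(q_y A) = \rho_y(A q_x)$, where I write $\rho_x(B) := \rho_B(x)$. I would establish it by first invoking \cref{important} to assume $M_\hbar = \textup{B}(\mathcal{H})$, so that each minimal projection is $q_x = |e_x\rangle\langle e_x|$ for a unit vector $e_x$ determined up to phase, and $\rho_x(B) = \langle e_x, B e_x\rangle$ (both $q_x$ and this expectation value being phase-independent). A one-line computation then gives $\rho_x(q_y A) = \langle e_x, e_y\rangle \langle e_y, A e_x\rangle = \rho_y(A q_x)$. Combining this with the $^*$-linearity of the states $\rho_x$ (so that $\rho_x(B^*) = \overline{\rho_x(B)}$) converts the integrand, yielding $\rho_{A^* q_y}(x) = \rho_{(A q_x)^*}(y)$.

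Having symmetrized the integrand, I would perform the $x$-integration first. Writing $\rho_{(A q_x)^*}(y) = \rho_y(q_x A^*)$ and using the overcompleteness axiom $\int_M q_x\, d\mu(x) = \mathds{1}$ (equivalently $Q_1 = \mathds{1}$) together with the continuity and linearity of the state $\rho_y$, the inner integral collapses: $\int_M \rho_y(q_x A^*)\, d\mu(x) = \rho_y(A^*) = \rho_{A^*}(y)$. Thus $\langle A, Q_f\rangle_{\mathcal{H}_S} = \int_M f(y)\, \rho_{A^*}(y)\, d\mu(y)$. Finally, one more application of $^*$-linearity, $\rho_{A^*}(y) = \overline{\rho_A(y)}$, identifies this with $\int_M \overline{\rho_A(y)}\, f(y)\, d\mu(y) = \langle \rho_A, f\rangle_{L^2}$, completing the proof.

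I expect the main obstacle to be the symmetry identity $\rho_x(q_y A) = \rho_y(A q_x)$, which is precisely what makes the two halves of the computation meet; everything else is bookkeeping with linearity, $^*$-linearity, and the overcompleteness axiom. The only analytic point requiring care is the justification of the Fubini interchange and of passing $\rho$ and the algebra multiplication through the defining integral for $Q_f$, but these are controlled by the integrability statements already proven.
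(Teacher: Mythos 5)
Your proof is correct and follows the paper's argument essentially verbatim: the same expansion of $\langle A, Q_f\rangle_{\mathcal{H}_S}$ into $\int_{M\times M} f(y)\,\rho_{A^*q_y}(x)\,d\mu(y)\,d\mu(x)$, the same key symmetry identity $\rho_x(q_yA)=\rho_y(Aq_x)$ combined with $^*$-linearity, and the same collapse of the $x$-integral via $Q_1=\mathds{1}$. Your additions are sound bookkeeping rather than a new route: you actually prove the symmetry identity using the identification $M_{\hbar}\cong \textup{B}(\mathcal{H})$ (which the paper asserts without proof), and your integrand $\rho_{(Aq_x)^*}(y)$ is evaluated at the correct point, where the paper's display writes $\rho_{(Aq_x)^*}(x)$, apparently a typo.
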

\begin{lemma}
If $(M_{\hbar},q)$ has enough states then $Q\vert_{L^2}$ has dense image. Conversely, if $Q\vert_{L^2}$ has dense image then there are enough states to separate $\mathcal{H}_S.$ Furthermore, $Q\vert_{L^2}$ is injective if and only if $Q^{\dagger}$ has a dense image.
\end{lemma}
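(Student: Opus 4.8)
The plan is to reduce all three assertions to the single elementary fact that, for a bounded operator $T$ between Hilbert spaces with adjoint $T^*$, one has the orthogonal decomposition $\overline{\operatorname{im}(T)}=\ker(T^*)^{\perp}$, so that $T$ has dense image if and only if $T^*$ is injective. Since the previous lemma already establishes that $Q\vert_{L^2}\colon L^2(M)\to\mathcal{H}_S$ and $Q^{\dagger}=\rho\vert_{\mathcal{H}_S}\colon\mathcal{H}_S\to L^2(M)$ are mutually adjoint and everywhere defined, I would first verify directly that for $A\in\mathcal{H}_S$ one has $A\perp\operatorname{im}(Q\vert_{L^2})$ if and only if $Q^{\dagger}A=0$: this is immediate from the identity $\langle A,Q_f\rangle_{\mathcal{H}_S}=\langle Q^{\dagger}A,f\rangle_{L^2}$, letting $f$ range over all of $L^2(M)$.

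The second step is to translate the separation axiom into a statement about $Q^{\dagger}$. By definition $Q^{\dagger}A=\rho_A$, so the condition that $q_xAq_x=0$ for all $x$ is exactly $Q^{\dagger}A=0$; thus "enough states to separate $\mathcal{H}_S$" means precisely that $Q^{\dagger}$ is injective on $\mathcal{H}_S$. Combining this with Step 1 gives both halves at once: if $(M_{\hbar},q)$ has enough states then in particular the states separate $\mathcal{H}_S$, so $Q^{\dagger}$ is injective and $Q\vert_{L^2}$ has dense image; conversely, if $Q\vert_{L^2}$ has dense image then $\ker Q^{\dagger}=\operatorname{im}(Q\vert_{L^2})^{\perp}=0$, which is exactly the statement that there are enough states to separate $\mathcal{H}_S$. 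I would be careful to flag the asymmetry here: the forward direction only uses, and the converse only yields, separation on the subspace $\mathcal{H}_S$, not the full separation axiom on all of $M_{\hbar}$, which is why the two halves are not phrased with identical hypotheses.

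For the final assertion I would apply the same orthogonality identity with the roles of the two operators exchanged. Since $Q\vert_{L^2}$ and $Q^{\dagger}$ are adjoint, $\ker(Q\vert_{L^2})=\operatorname{im}(Q^{\dagger})^{\perp}$, so $Q\vert_{L^2}$ is injective if and only if $\operatorname{im}(Q^{\dagger})^{\perp}=0$, i.e.\ if and only if $Q^{\dagger}$ has dense image in $L^2(M)$. This closes the equivalence.

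I do not anticipate a genuine obstacle, as everything is formal once the adjoint relation and the everywhere-definedness of both maps are in hand (both being bounded operators between Hilbert spaces, so the orthogonal-complement identities apply verbatim). The only points requiring care are bookkeeping ones: confirming that the adjoint identity from the preceding lemma is invoked with $f$ ranging over \emph{all} of $L^2(M)$, so that annihilating $\operatorname{im}(Q\vert_{L^2})$ coincides with membership in $\ker Q^{\dagger}$, and keeping the distinction between separation of $\mathcal{H}_S$ and separation of all of $M_{\hbar}$ explicit throughout.
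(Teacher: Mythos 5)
Your proposal is correct and follows essentially the same route as the paper, which likewise deduces all three assertions from the fact that $Q\vert_{L^2}$ and $Q^{\dagger}$ are adjoint bounded operators together with the standard identity $\overline{\operatorname{im}(T)}=\ker(T^{*})^{\perp}$. Your additional care in distinguishing separation of $\mathcal{H}_S$ from the full separation axiom on $M_{\hbar}$ is a reasonable elaboration of what the paper leaves implicit.
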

\begin{proof}
This follows from the fact that $Q^{\dagger}$ and $Q\vert_{L^2}$ are adjoint, since a map is injective if and only if its adjoint has dense image.
\end{proof}
\begin{definition}
We have a pairing 
\begin{equation}
\langle\cdot,\cdot\rangle:L^{\infty}(M)_{*}\times M_{\hbar*}\to\mathbb{C}\;,\;\;\langle d\lambda,s\rangle=\int_M s(q_x)\,d\lambda(x)\;,
\end{equation}
where $d\lambda$ is a Borel measure which is locally absolutely continuous with respect to the Lebesgue measure.
\end{definition}
\subsubsection{The 3-Point Function $\Delta$ and its Cohomology Class}\label{3point}
Here, we show that associated to any abstract coherent state quantization of $(M,d\mu)$ is a canonical representative of a class in $H^2(M,\mathbb{C}),$ which will actually turn out to be in $H^2(M,\mathbb{Z}).$ This is an abstraction of the first Chern class of a line bundle.
\\\\Recall that $\textup{Pair}\,M$ is the pair groupoid, \cref{pair}.
\begin{definition}
We define 
\begin{equation}
\Delta: \textup{Pair}^{(2)}\,M\to\mathbb{C}\;,\;\;\Delta(x,y,z)=\rho_x(q_yq_z)\;.\end{equation}
\end{definition}
We call $\Delta$ the 3–point function. This is an abstraction of the 3–point function defined in \cite{berc}. One can similarly define $n$–point functions for any $n\in\mathbb{N},$ but the 3–point function is the one that determines a first Chern class. As we are about to see, $\Delta$ determines a degree 2 class in the cohomology of the local pair groupoid, or equivalently, the Alexander–Spanier cohomology. First, we mention that $\Delta$ is invariant under isomorphisms:
\begin{proposition}
Let $(M_{\hbar,1},q_{1}),\,(M_{\hbar,2},q_{2})$ be abstract coherent state quantizations of $(M,d\mu).$ Suppose that $\pi:M_{\hbar,1}\to M_{\hbar,2}$ is an isomorphism of $W^*$-algebras for which $\pi(q_{1,x})=q_{2,x}.$ Then $\Delta_1(x,y,z)=\Delta_2(x,y,z).$
\end{proposition}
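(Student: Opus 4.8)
The plan is to observe that $\Delta$ is defined entirely in terms of the multiplicative and linear structure of the $W^*$-algebra together with the minimal projections $q_x$, so an isomorphism that intertwines the two families of projections automatically preserves it. First I would unwind the definition into its scalar-equation form. Recalling that $\rho_A(x)$ is the unique scalar satisfying $q_xAq_x=\rho_A(x)q_x$ and that $\Delta(x,y,z)=\rho_x(q_yq_z)=\rho_{q_yq_z}(x)$, the number $\Delta_i(x,y,z)$ is characterized, for $i=1,2$, by
\begin{equation}
q_{i,x}\,q_{i,y}\,q_{i,z}\,q_{i,x}=\Delta_i(x,y,z)\,q_{i,x}\;.
\end{equation}
Uniqueness of this scalar is guaranteed because $q_{i,x}\neq 0$ by the projection axiom.

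Next I would apply $\pi$ to the $i=1$ relation. As an isomorphism of $W^*$-algebras, $\pi$ is in particular a linear, multiplicative $^*$-homomorphism, so it respects products and pulls the scalar $\Delta_1(x,y,z)$ out of the right-hand side. Invoking the intertwining hypothesis $\pi(q_{1,w})=q_{2,w}$ for each $w=x,y,z$, the image of the relation becomes
\begin{equation}
q_{2,x}\,q_{2,y}\,q_{2,z}\,q_{2,x}=\Delta_1(x,y,z)\,q_{2,x}\;.
\end{equation}

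Finally I would compare this with the defining relation for $\Delta_2$, namely $q_{2,x}\,q_{2,y}\,q_{2,z}\,q_{2,x}=\Delta_2(x,y,z)\,q_{2,x}$. Subtracting yields $\bigl(\Delta_1(x,y,z)-\Delta_2(x,y,z)\bigr)q_{2,x}=0$, and since $q_{2,x}\neq 0$ we conclude $\Delta_1(x,y,z)=\Delta_2(x,y,z)$. There is no genuine obstacle here: the whole content is that $\Delta$ is assembled from operations — composition of the $q$'s and extraction of the scalar eigenvalue on the minimal projection $q_x$ — that are manifestly preserved by any projection-intertwining $W^*$-isomorphism. The only point demanding care is invoking $q_{2,x}\neq 0$, which is exactly what makes the defining scalar well-defined and legitimizes the cancellation.
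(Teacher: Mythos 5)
Your proof is correct and is exactly the routine verification the paper intends — indeed the paper states this proposition without any proof, treating it as immediate from the defining relation $q_xq_yq_zq_x=\Delta(x,y,z)q_x$ and the fact that a projection-intertwining $^*$-isomorphism preserves products, scalars, and nonvanishing of $q_{2,x}$. Nothing to add.
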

It follows from the definition that:
\begin{proposition}
$\Delta$ is $1$ on the identity bisection and is conjugation–antisymmetric with respect to the $S_3$–action on $\textup{Pair}^{(2)}\,M.$
\end{proposition}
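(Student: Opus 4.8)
The plan is to reduce both assertions to elementary properties of the trace, using the identification $M_{\hbar}\cong \textup{B}(\mathcal{H})$ from \cref{important}. Under this identification each $q_x$ is a minimal, hence rank–one, self–adjoint projection with $\textup{Tr}(q_x)=1$, and every product $q_xq_yq_z$ has rank at most one and is therefore trace class. In particular the trace is cyclic on all operators appearing below, and all the manipulations are justified by the fact that a rank–one projection always sits inside each product, forcing finite rank.

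First I would record a trace formula for $\rho$. Applying $\textup{Tr}$ to the defining equation $q_xAq_x=\rho_x(A)q_x$ and using $\textup{Tr}(q_x)=1$, $q_x^2=q_x$, and cyclicity gives $\rho_x(A)=\textup{Tr}(q_xA)$ for every $A\in M_{\hbar}$ (legitimate since $Aq_x$ is rank one). Specializing to $A=q_yq_z$ yields the key identity
\begin{equation}
\Delta(x,y,z)=\rho_x(q_yq_z)=\textup{Tr}(q_xq_yq_z)\;.
\end{equation}

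The \emph{identity bisection} claim is then immediate: on the diagonal $x=y=z$ we have $\Delta(x,x,x)=\textup{Tr}(q_x)=1$, since $q_x$ is idempotent of unit trace. For the $S_3$–equivariance I would use that $S_3$ is generated by the $3$–cycles together with any single transposition. Cyclicity of the trace gives $\textup{Tr}(q_xq_yq_z)=\textup{Tr}(q_yq_zq_x)=\textup{Tr}(q_zq_xq_y)$, so $\Delta$ is invariant under the cyclic subgroup $A_3$ of even permutations. For a transposition I would combine $\overline{\textup{Tr}(T)}=\textup{Tr}(T^*)$ with the self–adjointness $q_x^*=q_x$:
\begin{equation}
\overline{\Delta(x,y,z)}=\overline{\textup{Tr}(q_xq_yq_z)}=\textup{Tr}\big((q_xq_yq_z)^*\big)=\textup{Tr}(q_zq_yq_x)=\Delta(z,y,x)\;,
\end{equation}
so the transposition exchanging the two outer entries sends $\Delta$ to its complex conjugate. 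Since this transposition together with $A_3$ generates $S_3$, every odd permutation acts by conjugation and every even permutation acts trivially; this is precisely the asserted conjugation–antisymmetry governed by the sign homomorphism $S_3\to\mathbb{Z}/2$.

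I do not expect a serious obstacle: the entire content is packaged in the formula $\rho_x(A)=\textup{Tr}(q_xA)$ and in cyclicity, and the only point needing care is the trace bookkeeping in the infinite–dimensional case, which is handled by the rank–one remark above. Alternatively, one can avoid the trace altogether and argue directly from the definition. Cyclic invariance then follows from the exchange identity $\rho_x(q_yA)=\rho_y(Aq_x)$ (already used in the computation of the adjoint $Q^{\dagger}$) applied with $A=q_z$, giving $\Delta(x,y,z)=\Delta(y,z,x)$; and the conjugation under a transposition follows from the $^*$–linearity $\rho_x(A^*)=\overline{\rho_x(A)}$ — itself obtained by taking adjoints in $q_xAq_x=\rho_x(A)q_x$ using $q_x^*=q_x$ — together with $(q_yq_z)^*=q_zq_y$, which yields $\overline{\Delta(x,y,z)}=\Delta(x,z,y)$.
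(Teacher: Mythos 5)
Your proof is correct. The paper itself gives no argument here --- it states the proposition with the preface ``It follows from the definition that:'' --- so there is nothing to compare line by line; but your two routes are both legitimate ways of filling in the omitted verification. The trace route (identify $M_{\hbar}\cong \textup{B}(\mathcal{H})$ via \cref{important}, note $\rho_x(A)=\textup{Tr}(q_xA)$ since $q_x$ is rank one, then use cyclicity and $\overline{\textup{Tr}(T)}=\textup{Tr}(T^*)$) is clean and the rank-one remark does dispose of the trace-class bookkeeping. The alternative you sketch at the end is closer to what the paper implicitly intends: it later invokes the identity $\rho_x(q_yq_z)q_x=q_xq_yq_zq_x$ in exactly this spirit, and taking adjoints of that identity gives $\overline{\Delta(x,y,z)}=\Delta(x,z,y)$ directly, while the exchange identity $\rho_x(q_yA)=\rho_y(Aq_x)$ gives cyclic invariance. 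One small caveat on that second route: proving the exchange identity without the trace requires multiplying by $q_y$ on both sides and cancelling a factor $\rho_y(q_x)$, so you must treat the degenerate case $\rho_y(q_x)=0$ separately (there $q_xq_y=0$ forces both sides to vanish); the trace argument avoids this case split entirely, which is what it buys you.
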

\begin{definition}
Let $U$ be a neighborhood of $M\xhookrightarrow{}\textup{Pair}^{(2)}M$ such that $\Delta\vert_U$ is nowhere zero. We define a cohomology class in $H^2(M,\mathbb{C})$ as follows: let 
\begin{equation}
    [\Delta](x,y,z):=\frac{\Delta(x,y,z)}{\Delta(x,x,z)}\;.
\end{equation}
This defines a closed 2-cocycle in the cochain complex of local of the pair groupoid, valued in $\mathbb{C}^*.$ Its logarithm is a 2-cocycle valued in $\mathbb{C}$ (where we choose the logarithm so that $\log{[\Delta]}\vert_M=0).$ 
\end{definition}
One can see that this is a cocycle by applying the groupoid differential and using that $\rho_x(q_yq_z)q_x=q_xq_yq_zq_x,$ together with conjugation–antisymmetry. Assuming $(x,y,z)\mapsto \rho_x(q_yq_z)$ is smooth, we get a closed 2-form by applying the van Est map, \cref{vanest}. 
\begin{remark}
$\Delta$ implicitly appeared earlier, since
\begin{equation}
    \langle Q_f,Q_g\rangle_{\mathcal{H}_S}=\int_{M^3}\overline{f(y)}g(x)\Delta(x,y,z)\,d\mu(x)\,d\mu(y)\,d\mu(z)\;.
    \end{equation}
\end{remark}
\subsection{Abstract Quantization Maps}
Sometimes abstract quantization maps aren't assumed to be determined by an abstract coherent state quantization, eg. \cite{rieffel}. However, if a quantization map $Q:L^{\infty}(M)\to M_{\hbar}$ is determined by an abstract coherent state quantization, then the latter is uniquely determined. This happens for well–behaved quantization maps which ``preserve minimal projections". This implies that being an abstract coherent state quantization is a property of a quantization map.
\\\\We'll assume $M$ has finite volume, for simplicity.
\begin{proposition}
Let $M$ be a manifold with Borel measures $d\mu,\,d\mu',$ with finite volume. Let $q,\,q':M\to M_{\hbar}$ be abstract coherent state quantizations with respect to $d\mu,\,d\mu',$ respectively. Suppose that $Q=Q'.$ Then $d\mu=d\mu$ and $q=q'.$
\end{proposition}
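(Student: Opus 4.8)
The plan is to recover both the measures and the maps from the shared quantization map $Q = Q'$ by exploiting that the 3-point function and, more fundamentally, the minimal projections are intrinsic to the operator data. First I would observe that since $Q = Q'$ as maps out of $L^\infty(M)$ (and more generally on the common domain), the two overcompleteness identities give $\int_M q_x \, d\mu(x) = \mathds{1} = \int_M q'_x \, d\mu'(x)$, and more generally $\int_M f(x) q_x \, d\mu(x) = \int_M f(x) q'_x \, d\mu'(x)$ for all $f \in L^\infty(M)$. Testing this against an arbitrary predual element $s \in M_{\hbar *}$ yields
\begin{equation}
\int_M f(x)\, s(q_x)\, d\mu(x) = \int_M f(x)\, s(q'_x)\, d\mu'(x) \quad \text{for all } f \in L^\infty(M),\; s \in M_{\hbar *}.
\end{equation}
This is the key scalar identity; everything reduces to extracting pointwise information from it.

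Next I would use the projection axiom to pin down the maps pointwise. The idea is that $q_x$ is a \emph{minimal} projection, so it is the unique minimal projection ``concentrated near $x$''. Concretely, I would take $s = $ a state and use continuity of $q$ and $q'$ together with the localization afforded by choosing $f$ supported in a shrinking neighborhood of a point $x_0$. Normalizing by the total mass on that neighborhood and passing to the limit, the left-hand side converges (by continuity of $x \mapsto s(q_x)$) to $s(q_{x_0})$ weighted by $d\mu$, and the right-hand side to $s(q'_{x_0})$ weighted by $d\mu'$. To disentangle the measure from the operator, I would first show $d\mu = d\mu'$ by choosing $s = \tau$ a faithful normal trace-like functional (e.g. working in the identification $M_\hbar \cong \mathrm{B}(\mathcal{H})$ from \Cref{important} and using $\mathrm{Tr}$, legitimate since volumes are finite so $\mathcal{H}$ is finite-dimensional by \Cref{trace}): then $s(q_x) = s(q'_x) = 1$ identically, giving $\int_M f \, d\mu = \int_M f \, d\mu'$ for all $f \in L^\infty(M)$, hence $d\mu = d\mu'$.

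With $d\mu = d\mu'$ established, the scalar identity becomes $\int_M f(x)\big(s(q_x) - s(q'_x)\big)\, d\mu(x) = 0$ for all $f$ and all $s$. Since this holds for all $f \in L^\infty(M)$, for each fixed $s$ the continuous function $x \mapsto s(q_x) - s(q'_x)$ vanishes $\mu$-almost everywhere, and by continuity (both $q, q'$ are continuous and $M$ is connected with $d\mu$ having full support, which I would note follows from $q$ being a genuine injection with the overcompleteness axiom) it vanishes everywhere. Letting $s$ range over a separating family in $M_{\hbar *}$ gives $q_x = q'_x$ for every $x$, which is $q = q'$.

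The main obstacle is the disentangling step: separating the measure from the operator-valued integrand in the single identity $\int f\, s(q)\, d\mu = \int f\, s(q')\, d\mu'$, where \emph{a priori} both the measures and the projections differ. The clean resolution is the choice $s = \mathrm{Tr}$ (or any functional with $s(q_x) \equiv 1$), which collapses the operator dependence entirely and isolates the measures first; once $d\mu = d\mu'$ is known, the rest is a standard ``integrate against all test functions, then use continuity'' argument. I should be careful that the normalization $s(q_x) = 1$ genuinely holds for all minimal projections of the same rank — this is exactly why the finite-volume/finite-dimensional reduction is convenient, since there $\mathrm{Tr}(q_x) = 1$ is automatic for rank-one projections.
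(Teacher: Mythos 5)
Your proposal is correct and takes essentially the same route as the paper's proof: after identifying $M_{\hbar}$ with $\mathrm{B}(\mathcal{H})$, use $\mathrm{Tr}(q_x)=\mathrm{Tr}(q'_x)=1$ to deduce $d\mu=d\mu'$ via Riesz--Markov--Kakutani, and then read off $q=q'$ from $\int_M f(x)q_x\,d\mu(x)=\int_M f(x)q'_x\,d\mu(x)$. The only difference is that you spell out the final step (testing against predual elements and passing from $\mu$-a.e.\ to everywhere equality), where your parenthetical claim that $d\mu$ automatically has full support follows from injectivity plus overcompleteness is not actually justified by the axioms --- but the paper's own ``from which the result follows'' leaves the same point implicit, so this does not distinguish your argument from the paper's.
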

\begin{proof}
Without loss of generality, assume $M_{\hbar}=B(\mathcal{H}).$ For all $x\in M,$ $\textup{Tr}(q_{x})=\textup{Tr}(q_{x}')=1.$ It follows that for all $f\in C_c(M),$
\begin{equation}
    \int_M f(x)\,d\mu(x)=\int_M f\,d\mu'(x)\;.
\end{equation}By the Riesz–Markov–Kakutani representation theorem, $d\mu=d\mu'.$ This implies that for all $f\in C_c(M)$
\begin{equation}
    \int_M f(x)q_{x}\,d\mu(x)= \int_M f(x)q_{x}'\,d\mu(x)\;,
\end{equation}
from which the result follows.
\end{proof}
\begin{corollary}
Let $M$ be a manifold and consider a map $Q:L^{\infty}(M)\to M_{\hbar}.$ If $Q$ is determined by an abstract coherent state quantization, then $q$ and $d\mu$ are uniquely determined.
\end{corollary}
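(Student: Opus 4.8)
The plan is to deduce the corollary immediately from the preceding Proposition, the only substantive step being to unwind the phrase ``determined by.'' Saying that $Q$ is determined by an abstract coherent state quantization means that there exists a pair $(q,d\mu)$ --- with $q:M\hookrightarrow M_{\hbar}$ a continuous injection satisfying the projection and overcompleteness axioms, and $d\mu$ a finite Borel measure on $M$ --- such that $Q_f=\int_M f(x)\,q_x\,d\mu(x)$ for all $f\in L^{\infty}(M)$. The assertion to be proved is exactly that this witnessing pair is unique.

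First I would take two abstract coherent state quantizations $(q,d\mu)$ and $(q',d\mu')$, each of which determines the given fixed map $Q:L^{\infty}(M)\to M_{\hbar}$, and write $Q'$ for the quantization map built from the second pair. Since both pairs reproduce the same $Q$, we have $Q=Q'$ as maps into $M_{\hbar}$. Because $q$ and $q'$ both take values in this same $W^*$-algebra $M_{\hbar}$, and $M$ is assumed to have finite volume throughout this subsection, every hypothesis of the Proposition is met verbatim, and invoking it yields $d\mu=d\mu'$ and $q=q'$.

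Consequently the corollary follows with no further argument. It is worth recording where the actual work sits, namely inside the Proposition: there, equality of the measures comes from taking traces --- one has $\textup{Tr}(q_x)=\textup{Tr}(q'_x)=1$ because minimal projections are rank one, so $\textup{Tr}(Q_f)=\int_M f\,d\mu$ and likewise for $d\mu'$ --- combined with Riesz--Markov--Kakutani to force $d\mu=d\mu'$, after which $q=q'$ follows from continuity of $q,q'$ and the resulting identity $\int_M f q_x\,d\mu=\int_M f q'_x\,d\mu$ for all $f\in C_c(M)$. For the corollary itself I anticipate no genuine obstacle; the only point demanding care is the bookkeeping in the first paragraph, i.e.\ confirming that the common target algebra $M_{\hbar}$ and the finite-volume hypothesis are inherited from the setup of the corollary, so that the Proposition applies directly.
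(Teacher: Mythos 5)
Your proposal is correct and matches the paper's treatment: the corollary is stated as an immediate consequence of the preceding Proposition, obtained exactly as you describe by taking two witnessing pairs $(q,d\mu)$, $(q',d\mu')$ with $Q=Q'$ and invoking the Proposition (whose proof via traces and Riesz--Markov--Kakutani you also recount accurately). No gaps.
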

To understand how $q$ and $d\mu$ are constructed, assume $Q$ is determined by an abstract coherent state quantization. Then without loss of generality, we may assume $M_{\hbar}\cong \textup{B}(\mathcal{H}).$ Consider the map
\begin{equation}
C_c(M)\to\mathbb{C}\;,\;\;f\mapsto \textup{Tr}(Q_f)\;.
\end{equation}
By the Riesz–Markov–Kakutani representation theorem there is a Borel measure $d\mu$ on $M$ such that
\begin{equation}
    \textup{Tr}(Q_f)=\int_M f\,d\mu\;.
\end{equation}
$Q$ must extend to a bounded map
\begin{equation}
    Q\vert_{L^2}:L^2(M,d\mu)\to \textup{B}(\mathcal{H})_{\textup{HS}}
\end{equation}
with respect to the Hilbert–Schmidt norm. Letting $Q^{\dagger}$ be its adjoint, for each $x\in M$ we get a bounded map
\begin{equation}
\textup{B}(\mathcal{H})_{\textup{HS}}\to\mathbb{C}\;,\;\;A\mapsto (Q^{\dagger}(A))(x)\;.
\end{equation}
The Riesz representation theorem determines a Hilbert-Schmidt operator $q_x$ such that
\begin{equation}
    (Q^{\dagger}(A))(x)=\textup{Tr}(A^*q_x)\;.
\end{equation}
By assumption, $q_x$ must be a minimal projection.
\subsection{Abstract Deformation Quantization}
\begin{definition}
Let $I\subset (0,1]$ contain $0$ an accumulation point. An abstract coherent state \textbf{deformation} quantization of a symplectic manifold $(M^{2n},\omega)$ is given by an abstract coherent state quantization of $(M,d\mu_{\hbar}=\omega^n_{\hbar})$ for each $\hbar \in I,$ such that 
\begin{enumerate}
        \item [4.]$\omega_{\hbar}$ is a symplectic form such that $\hbar \omega_{\hbar}\xrightarrow[]{\hbar\to 0} C\omega$ pointwise, for some $C>0.$
        \item [5.]$\rho_{Q_{\hbar}(f)}(x)\xrightarrow[]{\hbar\to 0}f(x).$
     \item[6.] There exists a formal deformation quantization $\star_{\hbar}$ on $C_c^{\infty}(M)[[\hbar]]$ such that, for all $n\in\mathbb{N},$
\begin{align}\label{formal}
\frac{1}{\hbar^{n}}||Q_{\hbar}(f)Q_{\hbar}(g)-Q_{\hbar}(f\star_{\hbar}^n g)||_{\hbar}\xrightarrow[]{\hbar \to 0} 0\;,
\end{align}
where $f\star_{\hbar}^n g$ is the truncation of the formal deformation quantization above order $n.$
\end{enumerate}
\end{definition}
In nice cases, $\hbar_2>\hbar_1\implies\ker{Q_{\hbar_2}}\subset\ker{Q_{\hbar_1}}$ and the perturbative expansion of the resulting non–commutative product on $(\ker{Q}_{\hbar})^{\perp}$ is a star product.  See \cite{mar}, page 25.
\begin{remark}
The formal deformation quantization condition \ref{formal} is really just the corresponding condition for $n=1$ together with a smoothness condition (proposition 2.2, \cite{eli2}). That is, if there exists linear maps $C_k:C_c^{\infty}\otimes C_c^{\infty}(M)\to C_c^{\infty}(M),\,k=1,2,\ldots$ such that
\begin{equation}
  Q_{\hbar}(f)Q_{\hbar}(g)\sim Q_{\hbar}\big(fg+\sum_{k=0}^\infty(i\hbar)^k C_k(f,g)\big)\;\;\textup{as }\hbar\to 0\;,
\end{equation}
then $C_k,\,k=1,2,\ldots$ are unique and 
\begin{equation}
    f\star_{\hbar} g=fg+\sum_{k=0}^\infty(i\hbar)^k C_k(f,g)
    \end{equation}
is a formal deformation quantization if
\begin{equation}
    C_1(f,g)-C_1(g,f)=\{f,g\}\;.
\end{equation}
\end{remark}
\section{Formal Path Integral Quantization of Symplectic Manifolds}
Before doing rigorous mathematics we will describe the non–perturbative aspects of the formal path integral theory, much of which is described in \cite{klauder}, \cite{klauder2}. Some of the perturbative aspects are discussed in \cite{grady}, also \cite{brane}.
\\\\
In \cite{feynman}, Feynman formulates quantum mechanics as a path integral over a space of maps into phase space. This can be formally generalized to symplectic manifolds: given a prequantum line bundle 
\begin{equation}
    (\mathcal{L},\nabla,\langle \cdot,\cdot\rangle)\to (M^{2n},\omega/\hbar)\;,
\end{equation}
the path integral is given by
\begin{equation}
    \int \mathcal{D}\gamma\, P(\gamma)e^{-\frac{i}{\hbar}\int_0^1 \gamma^*H\,dt}\;,
\end{equation}
where $P(\gamma)$ is the parallel transport map between the fibers of $\mathcal{L}_{\hbar}$ over $\gamma(0),\gamma(1),$ and where $H$ is a Hamiltonian $H:M\to\mathbb{R}.$ If we let $H=0$ and take the domain of integration to be the space of maps
\begin{equation}
    \{\gamma:[0,1]\to M: \gamma_0=x,\gamma_1=y\}
\end{equation}
we get the coherent state path integral, which is equivalent to the equal–time path integral for $H\ne 0:$
\begin{equation}\label{path}
\int_{\gamma_0=x}^{\gamma_1=y} \mathcal{D}\gamma\, P(\gamma)\;\in \textup{Hom}(\mathcal{L}_x,\mathcal{L}_y)\;.
\end{equation}
Formally, this path integral is a Hermitian section of
\begin{equation}
    \pi_0^*\mathcal{L}^*\otimes \pi_1^*\mathcal{L}\to M\times M
\end{equation}
and is equal to the identity map when $x=y.$ In addition, it has the property that integrating over all paths that go from $x$ to $y$ gives the same result as first integrating over all paths that go from $x$ to $z$ and then from $z$ to $y,$ and finally integrating over $z.$ These properties imply that the linear operator which takes a section $\Psi\in\Gamma(\mathcal{L})$ to the section
\begin{equation}
  x\mapsto \int_M \frac{\omega^n_z}{\hbar^n}\,\Psi(z)\int_{\gamma_0=z}^{\gamma_1=x}\mathcal{D}\gamma\,P(\gamma)
\end{equation}
is an orthogonal projection. Its image is the physical Hilbert space $\mathcal{H}_{\textup{phy}}.$ 
\\\\For $0<t<1$ and $f\in L^{\infty}(M),$ there is a map on the space of paths given by $f_t(\gamma)=f(\gamma(t)).$
The integral kernel of the quantization map is given by
\begin{align}
   & Q:L^{\infty}(M)\to \Gamma(\pi_0^*\mathcal{L}^*\otimes \pi_1^*\mathcal{L})\;,\;\;Q_{f}(x,y)=\int_{\gamma_0=x}^{\gamma_1=y}\mathcal{D}\gamma\,P(\gamma)f_t(\gamma)\;,
\end{align}
and it satisfies $Q_{f}Q_{g}\sim Q_{f\star_{\hbar} g}$ as $\hbar\to 0,$ where $\star_{\hbar}$ is a star product (\cite{brane}, \cite{grady}). Each $x\in M$ defines a one–dimensional orthogonal projection, called a coherent state, resulting in a map
\begin{equation}
   q:M\to P(\mathcal{H}_{\textup{phy}})\subset B(\mathcal{H}_{\textup{phy}})\;,\;\;x\mapsto q_x\;,
    \end{equation}
where \begin{equation}
   q_x(\Psi)(y)= \Psi(x)\int_{\gamma_0=x}^{\gamma_1=y}\mathcal{D}\gamma\,P(\gamma)\;.
\end{equation}
This is the operator associated to the integral kernel $Q_{\delta_x},$ where $\delta_x$ is the delta function supported at $x.$ The map $q$ is a classifying map for $\mathcal{L}\to M,$ whose first Chern class is determined by applying the van Est map (\cref{vanest}) to the 3–point function
\begin{equation}
    \textup{Pair}^{(2)}\,M=M^3\to\mathbb{C}\;,\;\;(x,y,z)\mapsto \int_{\gamma_0=x,\gamma_1=y,\gamma_\infty=z}\mathcal{D}\gamma\,P(\gamma)\;,
\end{equation}
where the integral is over $\{\gamma:S^1\to M:\gamma_0=x,\,\gamma_1=y,\,\gamma_\infty=z\},$ where $0,1,\infty \in S^1$ are any distinct points ordered counter–clockwise.
\begin{remark}
Assuming $q$ is an immersion, computing \cref{path} endows $M$ with a Riemannian metric. If furthermore $q$ embeds $M$ as an almost complex submanifold, then $M$ inherits the structure of a K\"{a}hler manifold.
\end{remark}
\begin{remark}
Formally, integrating the coherent states over the fibers of a Lagrangian polarization determines an orthogonal basis for the physical Hilbert space, and this is equivalent to integrating the endpoints of \cref{path} over the fibers of the Lagrangian polarization. This gives a common form of the phase space path integral, \cite{ber}. One can see that that this works in the case of $\textup{T}^*\mathbb{R}^n$ when using linear Lagrangian polarizations, ie. using $x=\textup{const.}$ gives the position space basis of the Hilbert space.
\end{remark}
\subsection{Formal Path Integral Quantization of Poisson Manifolds}
The path integral quantization of symplectic manifolds generalizes to Poisson manifolds $(M,\hbar\Pi)$ by replacing $M\times M$ with the symplectic groupoid $\Pi_1(T^*M),$ and the prequantum line bundle over $M$ with the multiplicative prequantum line bundle $(\mathcal{L},\nabla,\langle\cdot,\cdot\rangle)\to\Pi_1(T^*M).$ The path integral is formally given by
\begin{equation}\label{pathalg}
   \int_{[\gamma]= g}\mathcal{D}\gamma\,P(\gamma)\;\in \mathcal{L}_g\;.
    \end{equation}
Here, $P(\gamma)$ denotes parallel transport over the algebroid path $\gamma$ with respect to the multiplicative prequantum line bundle, and the path integral is over algebroid paths with homotopy class $g\in \Pi_1(T^*M).$ This reduces to the path integral of symplectic manifolds if $\Pi$ is symplectic and the pair groupoid is used.
\\\\
The quantization $Q_{f}$ of $f\in L^{\infty}(M)$ is a section of the prequantum line bundle — these sections generate a $W^*$-algebra. Each $x\in M$ determines a state $\rho_x$ such that $\rho_x(Q_{f})=Q_f(x),$ where we are identifying $x$ with its identity arrow. There are corresponding elements $q_x$ that resolve the identity, but they need not be projections. Let $0,\,\infty\in \partial D$ be distinct points on the boundary of a disk. If the source simply connected groupoid is used, then formally 
\begin{equation}\label{state}
Q_f(x)=\int_{X:TD\to T^*M}\mathcal{D}X\,f(X(\infty))e^{\frac{i}{\hbar}\int_D X^*\Pi}\;,
\end{equation}
where the path integral is over algebroid morphisms for which $X(0)=x.$
\\\\Integrals of the form \cref{pathalg}, \cref{state} are directly related to Kontsevich's star product \cite{kontsevich} via the Poisson sigma model \cite{bon}, which formally explains why they perturbatively produce star products, ie. $Q_{f_1}Q_{f_2}\sim Q_{f_1\star_{\hbar} f_2}$ as $\hbar\to 0.$
\section{Path Integral Quantization of Manifolds}\label{pman}
We will first define the general case of a path integral whose integrand is parallel transport between two fibers of a line bundle, ie.
\begin{equation}\label{path1}
    \Omega(x,y)=\int_{\gamma(0)=x}^{\gamma(1)=y}P(\gamma)\,\mathcal{D}\gamma\;,
\end{equation}
where $P(\gamma)$ denotes parallel transport over $\gamma.$  We do this before discussing the path integral in the symplectic case, because that case involves defining a sequence of path integrals depending on $\hbar$ and obeying an asymptotic condition.
\\\\Given a line bundle $\pi:\mathcal{L}\to M,$ any section $\Omega$ of 
\begin{equation}
    \pi_0^*\mathcal{L}^*\otimes \pi_1^*\mathcal{L}\to M\times M
    \end{equation}
which satisfies $\Omega(x,x)=\mathds{1}$ for all $x\in M$ determines a connection $\nabla_{\Omega}$ on $\mathcal{L}.$ This is because we can identify a connection with a splitting 
\begin{equation}\label{split}
 \begin{tikzcd}
T\mathcal{L} \arrow[r] & \pi^*TM \arrow[l, "\nabla", dotted, bend left, shift left=2]
\end{tikzcd},
\end{equation}
and $\Omega$ determines such a splitting.
\begin{definition}
Given $\Omega\in\Gamma(\pi_0^*\mathcal{L}^*\otimes \pi_1^*\mathcal{L})$ such that $\Omega(x,x))=\mathds{1}$ for all $x\in M,$ we define $\nabla_{\Omega}$ to be the connection determined by the splitting of \cref{split} given by 
\begin{equation}
    \nabla_{\Omega}(l_x,X)=(l_x\Omega(x,\cdot))_*X\;,
\end{equation}
where $(l_x,X)\in\pi^*T_x M$ and $l_x\Omega(x,\cdot)\in\Gamma(\mathcal{L})$ is given by
\begin{equation}
    y\mapsto l_x\Omega(x,y)\;.
\end{equation}
\end{definition}
\subsection{Definition of the Path Integral}
In \cref{theproof}, we formally prove that finding $\Omega$ satisfying the following definition is equivalent to computing the path integral. In particular, we will rigorously prove that such an $\Omega$ determines an inverse system of complex measures on the space of paths.
\begin{definition}\label{patho}
Let $\mathcal{L}\to M$ be a line bundle with Hermitian metric and let $d\mu$ be a Borel measure on $M.$ Let $\Omega$ be a continuous section of
\begin{equation}
    \pi_0^*\mathcal{L}^*\otimes \pi_1^*\mathcal{L}\to M\times M
    \end{equation}
such that 
\begin{enumerate}
    \item \label{nor}$\Omega(x,x)=\mathds{1}\,,$
    \item \label{norm2} $|\Omega(x,y)|<1$ if $x\ne y\,,$
    \item\label{orth} $\Omega(x,y)=\Omega^*(y,x)\,,$
    \item \label{conditionf}$\int_M \Omega(x,z)\Omega(z,y)\,d\mu(z)=\Omega(x,y)\,,$
    \item \label{bounded} $\sup_{x\in M} \int_M|\Omega(x,y)|\,d\mu(y)<\infty\,.$ 
\end{enumerate}
We say that $\Omega$ is an (equal–time) propagator. If $\Omega$ is smooth and $\nabla_{\Omega}=\nabla,$ then we say that $\Omega$ is a propagator integrating $\nabla.$ If $F(\nabla_{\Omega})=\omega,$ we say that $\Omega$ integrates $\omega.$
\end{definition}
In the following, $\textup{Pair}\,M$ is the pair groupoid, \cref{pair}.
\begin{definition}
We define a function
\begin{equation}
\Delta:\textup{Pair}^{(2)}\,M\to\mathbb{C}\;,\;\; \Delta(x,y,z)=\Omega(x,y)\Omega(y,z)\Omega(z,x)
\end{equation}
which we call the 3–point function. 
\end{definition}
Associated to $\Delta$ is a cohomology class in $H^2(M,\mathbb{C}),$ where we use the identification of $H^{\bullet}(M,\mathbb{C})$ with the cohomology of the local pair groupoid:
\begin{definition}
Let $U$ be a neighborhood of $M\xhookrightarrow{}\textup{Pair}^{(2)}\,M$ such that $\Delta\vert_U$ is nowhere zero. We have a function
\begin{equation}
[\Delta]:U\to\mathbb{C}^*\;,\;\;[\Delta](x,y,z)=\frac{\Delta(x,y,z)}{\Delta(x,x,z)}\;.
\end{equation}
$[\Delta]$ is a $2$–cocycle on the local pair groupoid valued in $\mathbb{C}^*,$ and $\log{[\Delta}]$ is a 2-cocycle valued in $\mathbb{C},$ where we choose the logarithm so that $\log{[\Delta}]\vert_M=0.$
\end{definition}
In the following, $\textup{VE}$ is the van Est map (\cref{vanest}):
\begin{proposition}
$F(\nabla_{\Omega})=\textup{VE}(\log{\Delta}).$
\end{proposition}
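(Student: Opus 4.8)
The plan is to reduce the identity to a purely local computation at the diagonal $M\hookrightarrow\textup{Pair}^{(2)}M$, where both sides are extracted from the second-order Taylor expansion of $\log\Omega$. First I would choose a local trivialization of $\mathcal{L}$ on a neighborhood of a point; since $\Omega(x,x)=\mathds{1}$ and $|\Omega(x,y)|<1$ for $x\ne y$, the trivialized propagator takes values near $1$ on a neighborhood of the diagonal, so I may set $\phi(x,y):=\log\Omega(x,y)$ with $\phi(x,x)=0$. The van Est map only sees derivatives at the diagonal, and curvature is a globally defined $2$-form independent of the trivialization, so this local description loses nothing.

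Next I would read off the connection $1$-form of $\nabla_\Omega$ from its defining splitting (\cref{split}). The horizontal section through $l_x\in\mathcal{L}_x$ is $y\mapsto l_x\Omega(x,y)$, which in the trivialization is $y\mapsto l_x e^{\phi(x,y)}$; differentiating at $y=x$ yields the connection form $A=-\,d_y\phi(x,y)\big|_{y=x}$, a $1$-form on $M$. Writing $M_{jk}:=\partial_{x^j}\partial_{y^k}\phi\big|_{\textup{diag}}$ for the mixed Hessian of $\phi$ on the diagonal, a direct computation of $F(\nabla_\Omega)=dA$ shows that the pure second-slot Hessian $\partial_{y^j}\partial_{y^k}\phi$ is symmetric and cancels, leaving $F(\nabla_\Omega)_{jk}=M_{kj}-M_{jk}$, i.e. (up to sign) twice the antisymmetric part of the mixed Hessian.

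Then I would compute $\textup{VE}(\log\Delta)$ directly from the van Est formula (\cref{vanest}), using $\log\Delta(x,y,z)=\phi(x,y)+\phi(y,z)+\phi(z,x)$. Since the van Est map differentiates only the second and third slots before restricting to the diagonal, a summand $\phi(a,b)$ contributes a nonzero mixed derivative exactly when its two arguments are the two differentiated slots; thus for each permutation only the ``middle'' summand survives, and summing over the $S_3$-antisymmetrization $\Lambda$ again produces the antisymmetric part of $M_{jk}$. Matching the two computations is then a matter of fixing the sign conventions (the orientation of the triangle $x\to y\to z\to x$ and the normalizations built into $\Lambda$ and into $F=dA$) so that the overall constants agree.

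The main obstacle I anticipate is not conceptual but bookkeeping: carrying out the $S_3$-antisymmetrization while tracking which vector field differentiates which slot, and confirming that the contributions organize into exactly the antisymmetrized mixed Hessian with the correct constant. A useful sanity check is that property~3 ($\Omega(x,y)=\Omega^*(y,x)$) forces $\Delta(x,x,z)=|\Omega(x,z)|^2$ to be real, so the real, symmetric part of $\log\Delta$ is annihilated by the antisymmetrizing van Est map; this is what lets $\log\Delta$ stand in for the normalized cocycle $\log[\Delta]$, and it makes transparent why $F(\nabla_\Omega)$ comes out purely imaginary, as the curvature of a Hermitian connection must.
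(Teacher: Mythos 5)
Your proof is correct in substance, but it takes a more hands-on route than the paper, which states this proposition without a written proof; the intended argument (visible in an earlier commented-out draft of the same statement) is structural rather than computational. Namely: in a local trivialization $\phi=\log\Omega$ is a $1$-cochain on the local pair groupoid, the normalized $3$-point function satisfies $\log[\Delta](x,y,z)=\phi(x,y)+\phi(y,z)-\phi(x,z)=(\delta^*\phi)(x,y,z)$, i.e.\ it is a simplicial coboundary, and $\textup{VE}(\phi)$ is (up to sign) the connection $1$-form of $\nabla_\Omega$; the identity then follows in one line from the fact that the van Est map is a chain map, $\textup{VE}\circ\delta^*=d\circ\textup{VE}$. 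Your computation is precisely the degree-$2$ coordinate verification of that chain-map identity: your mixed Hessian $M_{jk}$ is the contribution of the one surviving ``middle'' summand, and $F_{jk}=M_{kj}-M_{jk}$ is $d$ of the connection form, so the two approaches differ only in the level at which the bookkeeping is done. What your direct route buys is transparency on a point the paper glosses over: as you note, $\textup{Re}\,\phi=\log|\Omega|$ is symmetric by condition 3, so the symmetric part of $M_{jk}$ is genuinely nonzero (it is, up to sign, the pullback metric appearing later in the paper as $\mathcal{J}_0(\log\Delta)$) and is killed only by antisymmetrization; applied literally, the appendix's formula for $\textup{VE}$, which restricts to cyclically invariant cochains but does not antisymmetrize, would not produce a $2$-form out of $\log\Delta$, so your insistence on the antisymmetrizing version is the right reading. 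What the chain-map argument buys is that the sign and normalization bookkeeping you explicitly defer is settled once and for all in the proof that $\textup{VE}$ is a chain map; since the paper itself carries two mutually inconsistent normalizations of $\textup{VE}$ (the $(n+1)!$-with-antisymmetrization version and the $n!$-without version), your caveat that the final constant must be matched to the chosen convention is warranted rather than a gap.
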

When quantizing symplectic manifolds it is mostly $\omega$ that we are interested in, rather than the specific connection.
\\\\We call $\Delta$ a 3–point function because as a path integral it is equal to
\begin{equation}
    \Delta(x,y,z)=\int_X P(\gamma)\,\mathcal{D}\gamma\;,\;\;X=\{\gamma:S^1\to M: \gamma(0)=x, \gamma(1)=y, \gamma(\infty)=z\}\;,
\end{equation}
where $0,\,1,\,\infty\in S^1$ are any distinct points ordered counterclockwise.
\begin{remark} 
As we will see, associated to any $x\in M$ is a state, and condition $1$ is required for this state to be normalized. Condition $2$ is a mild condition, and it means that this state is localized at $x$ and that the map from points to states is injective. The third condition is a Hermitian condition and is due to parallel transport defining a Hermitian map. The fourth condition is a consistency condition and implies that the path integral satisfies a form of Fubini's theorem: integrating from $x$ to $y$ is equivalent to integrating from $x$ to $z$ and then from $z$ to $y,$ and finally integrating over $z.$ Condition 5 is a technical condition and can be weakened, but it automatically holds if $\textup{Vol}_{\mu}(M)<\infty.$
\end{remark}
Note that, conditions 2 and 4 imply that for all $x,y\in M,$ 
\begin{equation}
    (z\mapsto\Omega(x,z)\Omega(z,y))\in L^1(M,\mathcal{L})\;.
    \end{equation}
Therefore, condition 5 makes sense. Furthermore, condition 2 is a mild condition: the assumption that 
\begin{equation}
    (z\mapsto\Omega(x,z)\Omega(z,y))\in L^1(M,\mathcal{L})
    \end{equation}
together with conditions 1, 3, 4 imply that $|\Omega(x,y)|\le 1,$ via the Cauchy–Schwarz inequality.
\begin{lemma}\label{boundedd}
Let $\Psi\in\Gamma(\mathcal{L})$ be a continuous section. Then
\begin{equation}
M\to L^2(M,\mathcal{L})\;,\;\;x\mapsto \Psi(x)\Omega(x,\cdot)\
\end{equation}
is a continuous.
\end{lemma}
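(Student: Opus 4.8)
The plan is to prove continuity at an arbitrary fixed $x\in M$ by computing the relevant $L^2$–inner products exactly and then invoking the Hilbert–space identity
\begin{equation}
\|f_{x'}-f_x\|_{L^2}^2=\|f_{x'}\|_{L^2}^2+\|f_x\|_{L^2}^2-2\operatorname{Re}\langle f_{x'},f_x\rangle_{L^2}\;,
\end{equation}
where I abbreviate $f_x:=\Psi(x)\Omega(x,\cdot)$. The whole argument rests on one clean formula for the pairing of two such sections, and everything else is continuity bookkeeping.

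First I would establish that, for all $x,x'\in M$,
\begin{equation}
\langle f_x,f_{x'}\rangle_{L^2}=\Big\langle \Psi(x),\Big(\int_M\Omega(y,x)\,\Omega(x',y)\,d\mu(y)\Big)\Psi(x')\Big\rangle_x=\langle\Psi(x),\Omega(x',x)\Psi(x')\rangle_x\;.
\end{equation}
The middle expression comes from writing $\langle\Omega(x,y)\Psi(x),\Omega(x',y)\Psi(x')\rangle_y=\langle\Psi(x),\Omega(x,y)^*\Omega(x',y)\Psi(x')\rangle_x$ and using the Hermiticity condition \ref{orth} in the form $\Omega(x,y)^*=\Omega(y,x)$; the final equality is precisely the consistency condition \ref{conditionf}. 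Specializing to $x'=x$ and using $\Omega(x,x)=\mathds{1}$ (condition \ref{nor}) gives $\|f_x\|_{L^2}^2=|\Psi(x)|^2<\infty$, so in particular each $f_x$ really does lie in $L^2(M,\mathcal{L})$.

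The one analytic step is justifying the interchange of $\int_M$ with the inner product $\langle\Psi(x),-\rangle_x$, and this is exactly where the boundedness condition \ref{bounded} is used. I would dominate the integrand by $|\Psi(x)|\,|\Psi(x')|\,|\Omega(y,x)|$, using $|\Omega(x',y)|\le 1$ (which follows from condition \ref{norm2}, or from the Cauchy–Schwarz remark following the definition), and then note $\int_M|\Omega(y,x)|\,d\mu(y)=\int_M|\Omega(x,y)|\,d\mu(y)<\infty$ by conditions \ref{orth} and \ref{bounded}; Fubini together with continuity and linearity of the inner product in its second slot then license pulling the integral through.

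With the formula in hand, continuity is immediate: fix $x$ and let $x'\to x$. Since $\Omega$ is a continuous section and $\Psi$ is continuous, the vector $\Omega(x',x)\Psi(x')$ lies in the fixed fiber $\mathcal{L}_x$ and converges to $\Omega(x,x)\Psi(x)=\Psi(x)$, whence $\langle f_x,f_{x'}\rangle_{L^2}\to|\Psi(x)|^2$; by conjugate symmetry $\langle f_{x'},f_x\rangle_{L^2}$ has the same (real) limit, and $\|f_{x'}\|_{L^2}^2=|\Psi(x')|^2\to|\Psi(x)|^2$ since $x'\mapsto|\Psi(x')|^2$ is continuous. Substituting into the displayed identity gives $\|f_{x'}-f_x\|_{L^2}^2\to 0$. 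The only genuinely delicate point is the bundle bookkeeping — each $\langle\cdot,\cdot\rangle_y$ lives in a varying fiber — but after integrating, the crucial limit $\Omega(x',x)\Psi(x')\to\Psi(x)$ takes place inside the single fixed fiber $\mathcal{L}_x$, so continuity of the Hermitian metric there applies; passing to a local trivialization of $\mathcal{L}$ near $x$ makes this completely transparent.
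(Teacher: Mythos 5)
Your proof is correct and follows essentially the same route as the paper: the paper's one-line proof expands $\int_M|\Psi(x)\Omega(x,z)-\Psi(y)\Omega(y,z)|^2\,d\mu(z)$ directly into $|\Psi(x)|^2+|\Psi(y)|^2$ minus the two cross terms, which is exactly your inner-product formula combined with the polarization identity. If anything you are more careful than the paper, which cites conditions 1, 3, 5 but tacitly also uses condition \ref{conditionf} to evaluate the cross terms, a point your write-up makes explicit.
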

\begin{proof}
Properties 1, 3, 5 show that
\begin{align}
    \int_M |\Psi(x)\Omega(x,z)-\Psi(y)\Omega(y,z)|^2\,d\mu(z)=|\Psi(x)|^2+|\Psi(y)|^2-\langle\Psi(x),\Psi(y)\rangle-\langle\Psi(y),\Psi(x)\rangle\;,
\end{align}
and the result follows.
\end{proof}
\begin{lemma}\label{norma}
For all $x\in M,$ $|\Omega(x,y)|^2$ is a probability density, ie. $\int_M |\Omega(x,y)|^2\,d\mu(y)=1.$
\end{lemma}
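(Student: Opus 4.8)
The plan is to evaluate the reproducing identity (condition \ref{conditionf}) on the diagonal. Setting $y=x$ in that condition and using the normalization condition \ref{nor} gives
\begin{equation}
\int_M \Omega(x,z)\Omega(z,x)\,d\mu(z)=\Omega(x,x)=\mathds{1}_{\mathcal{L}_x}\;.
\end{equation}
The entire lemma then reduces to identifying the integrand $\Omega(x,z)\Omega(z,x)$ with $|\Omega(x,z)|^2\,\mathds{1}_{\mathcal{L}_x}$, after which the claim is immediate (up to renaming the integration variable $z$ to $y$).

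To carry out that identification, recall $\Omega(x,z)\in\textup{Hom}(\mathcal{L}_x,\mathcal{L}_z)$, and that by the Hermitian condition \ref{orth} we have $\Omega(z,x)=\Omega(x,z)^*$. Reading the juxtaposition product as in condition \ref{conditionf} — so that $\Omega(x,z)\Omega(z,x)$ is the composite $\mathcal{L}_x\to\mathcal{L}_z\to\mathcal{L}_x$ — this product is exactly $\Omega(x,z)^*\Omega(x,z)\in\textup{End}(\mathcal{L}_x)$. Here one-dimensionality of the fibers does the work: for any linear map $T$ between one-dimensional Hermitian spaces, choosing unit vectors shows $T^*T=|T|^2\,\mathds{1}$, where $|T|$ is the norm induced by the Hermitian metrics (the unique singular value of $T$). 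Applying this with $T=\Omega(x,z)$ yields $\Omega(x,z)\Omega(z,x)=|\Omega(x,z)|^2\,\mathds{1}_{\mathcal{L}_x}$, and substituting back gives $\big(\int_M|\Omega(x,z)|^2\,d\mu(z)\big)\mathds{1}_{\mathcal{L}_x}=\mathds{1}_{\mathcal{L}_x}$, whence the integral equals $1$.

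I expect no serious obstacle: the only technical point is convergence of the diagonal integral, and this is already handed to us by the remark preceding the lemma, since conditions \ref{norm2} and \ref{conditionf} give $z\mapsto\Omega(x,z)\Omega(z,x)\in L^1(M,\mathcal{L})$, while $|\Omega(x,z)|\le 1$ together with condition \ref{bounded} gives $|\Omega(x,\cdot)|^2\in L^1(M,d\mu)$ as well. The one genuinely conceptual input is the fiberwise identity $T^*T=|T|^2\,\mathds{1}$, which is precisely where the rank-one nature of $\mathcal{L}$ is essential; in higher rank the correct statement would involve a trace rather than a scalar, so the probability-density interpretation is special to line bundles. In short, the lemma is a direct specialization of the reproducing property combined with the Hermitian and normalization conditions.
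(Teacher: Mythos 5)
Your proof is correct and follows the same route as the paper's (one-line) proof: set $y=x$ in the reproducing identity, use $\Omega(z,x)=\Omega(x,z)^*$ and one-dimensionality of the fibers to identify $\Omega(x,z)\Omega(z,x)$ with $|\Omega(x,z)|^2\,\mathds{1}_{\mathcal{L}_x}$, and conclude from $\Omega(x,x)=\mathds{1}$. Your write-up is in fact more careful than the paper's, which merely cites ``conditions 1, 3, 5'' (apparently meaning 1, 3, 4, since the reproducing property is the essential input); your explicit treatment of the fiberwise identity $T^*T=|T|^2\,\mathds{1}$ and of integrability is exactly what that citation leaves implicit.
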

\begin{proof}
This follows from conditions 1, 3, 5.
\end{proof}
\begin{lemma}
The map
\begin{equation}\label{rightside}
    P_{\Omega}:L^2(M,\mathcal{L})\to L^2(M,\mathcal{L})\;,\;\;P_{\Omega}(\Psi)(x)=\int_M\Psi(y)\Omega(y,x)\,d\mu(y)
\end{equation}
is well–defined and is an orthogonal projection. In particular, it is bounded.
\end{lemma}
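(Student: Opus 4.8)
The plan is to establish the three defining features of an orthogonal projection in turn: well-definedness together with boundedness, then idempotency $P_{\Omega}^2=P_{\Omega}$, and finally self-adjointness $P_{\Omega}^*=P_{\Omega}$. For boundedness I would apply Schur's test to the integral operator $P_{\Omega}$, whose kernel is $(x,y)\mapsto\Omega(y,x)$. By the Hermitian condition \cref{orth} we have $|\Omega(y,x)|=|\Omega(x,y)|$, so setting $C:=\sup_{x\in M}\int_M|\Omega(x,y)|\,d\mu(y)$, which is finite by \cref{bounded}, both Schur suprema coincide:
\begin{equation}
\sup_{x}\int_M|\Omega(y,x)|\,d\mu(y)=\sup_{y}\int_M|\Omega(y,x)|\,d\mu(x)=C\;.
\end{equation}
Hence $\|P_{\Omega}\|\le C$ and $P_{\Omega}$ maps $L^2(M,\mathcal{L})$ into itself. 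Absolute convergence of the defining integral at each fixed $x$ follows independently from Cauchy–Schwarz together with \cref{norma}, giving the pointwise bound $|P_{\Omega}(\Psi)(x)|\le\|\Psi\|_{L^2}$, which secures well-definedness at the level of values.

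For idempotency I would write out $P_{\Omega}^2(\Psi)(x)=\int_M\big(\int_M\Psi(z)\Omega(z,y)\,d\mu(z)\big)\Omega(y,x)\,d\mu(y)$, interchange the two integrations, and collapse the inner integral by the consistency condition \cref{conditionf}, namely $\int_M\Omega(z,y)\Omega(y,x)\,d\mu(y)=\Omega(z,x)$. What remains is exactly $\int_M\Psi(z)\Omega(z,x)\,d\mu(z)=P_{\Omega}(\Psi)(x)$, so $P_{\Omega}^2=P_{\Omega}$.

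For self-adjointness I would compute $\langle P_{\Omega}\Psi,\Phi\rangle$ by expanding the outer integral and moving $\Omega(y,x)$ across the fibrewise Hermitian pairing via the adjoint relation $\Omega(y,x)=\Omega^*(x,y)$ of \cref{orth}; after interchanging the order of integration this produces $\int_M\langle\Psi(y),\int_M\Omega(x,y)\Phi(x)\,d\mu(x)\rangle\,d\mu(y)=\langle\Psi,P_{\Omega}\Phi\rangle$, since the inner integral is $P_{\Omega}(\Phi)(y)$. Thus $P_{\Omega}^*=P_{\Omega}$.

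The only genuinely delicate points are the two Fubini interchanges. Both are justified by the absolute integrability supplied by \cref{bounded} for the idempotency computation, and by Cauchy–Schwarz combined with the probability-density property \cref{norma} for the self-adjointness computation, so the double integrals involved converge absolutely and the exchanges are legitimate. Once these three properties are in hand, a bounded self-adjoint idempotent on a Hilbert space is by definition an orthogonal projection, which completes the argument.
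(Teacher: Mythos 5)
Your proposal is correct and follows essentially the same route as the paper: the paper's $L^2$ estimate, obtained by splitting $|\Psi|\sqrt{|\Omega|}\cdot\sqrt{|\Omega|}$ and applying H\"older together with \cref{orth} and \cref{bounded}, is exactly Schur's test written out by hand, and the idempotency and self-adjointness computations via Fubini, \cref{conditionf} and \cref{orth} match the paper's. One small bookkeeping slip: for the self-adjointness interchange, Cauchy--Schwarz together with \cref{norma} alone only yields a bound of the form $\|\Psi\|_{1}\|\Phi\|_{2}$, which need not be finite when $\mathrm{Vol}_{\mu}(M)=\infty$; the absolute convergence of $\int_{M\times M}|\Psi(y)||\Omega(y,x)||\Phi(x)|\,d\mu(y)\,d\mu(x)$ should instead be obtained from the same Schur-type splitting using \cref{bounded}, which is what the paper does and which you already have in hand from the boundedness step.
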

\begin{proof}
First we'll show that for all $x\in M,$ $(y\mapsto(\Psi(y)\Omega(y,x))\in L^1(M,\mathcal{L}_x),$ so that the right side of \cref{rightside} makes sense:
\begin{align}
\int_M|\Psi(y)\Omega(y,x)|\,d\mu(y)\le \int_M|\Psi(y)|^2\,d\mu(y)\int_M|\Omega(y,x)|^2\,d\mu(y)=\int_M|\Psi(y)|^2\,dy\;,
\end{align}
where the inequality follows from H\"{o}lder's inequality and the equality follows from \cref{norma}
\\\\Now we'll show that $P_{\Omega}\Psi\in L^2(M,\mathcal{L}).$ We have that 
\begin{align}
&\nonumber\|P_{\Omega}\|_2^2=\int_M \Big|\int_M \Psi(y)\Omega(y,x)\,d\mu(y)\Big|^2\,d\mu(x)
\\&\le \int_{M}\bigg(\int_M|\Psi(y)|^2|\Omega(y,x)|\,d\mu(y)\int_M |\Omega(y,x)|\,d\mu(y)\bigg)\;d\mu(x)\le \|\Psi\|^2_2\sup_{x\in M}\|\Omega(x,\cdot)\|^2_1<\infty\;,
\end{align}
where the first inequality follows the triangle inequality and H\"{o}lder's inequality with the functions $|\Psi|\sqrt{|\Omega(x,\cdot)|},\,\sqrt{|\Omega(x,\cdot)|},$ and the last inequality follows from Fubini's theorem, condition 3 and condition 5.
\\\\Finally, we'll show that $P_{\Omega}^2=P_{\Omega},\,P_{\Omega}^*=P_{\Omega}.$ We first note that if $\Psi\in L^2(M,\mathcal{L})$ then $(y\mapsto P_{\Omega}\Psi(y)\Omega(y,x))\in L^1(M,\mathcal{L})$ for each $x\in M.$ To see this, note that
\begin{equation}
\int_M |P_{\Omega}\Psi(y)\Omega(y,x)|\,d\mu(y)\le\|P_{\Omega}\Psi\|_2\|\Omega(\cdot,x)\|_2=\|P_{\Omega}\Psi\|_2\;.
\end{equation}
Therefore, Fubini's theorem and condition 3 imply that $P^2=P.$ Now observe that for $\Psi_1,\,\Psi_2\in L^2(M,\mathcal{L}),$ $((x,y)\mapsto \langle\Psi_1(x)\Omega(x,y),\Psi_2(y)\rangle)\in L^1(M\times M,\mathbb{C}).$ This is because
\begin{align}
&\nonumber \int_{M\times M}|\langle\Psi_1(x)\Omega(x,y),\Psi_2(y)\rangle|\,d\mu(x) d\mu(y)
\\&\le \int_{M\times M}|\Psi_1(x)||\Omega(x,y)||\Psi_2(y)|\,d\mu(x) d\mu(y)
\le \|\Psi_1\|^2_2\|\Psi_2\|^2_2\sup_{x\in M}\|\Omega(x,\cdot)\|^2_1\;.
\end{align}
Therefore, by Fubini's theorem we can switch the order of integration and use condition 3 to see that $P_{\Omega}^*=P_{\Omega}.$ Since orthogonal projections have norm 1, this completes the proof.
\end{proof}
\section{Propagators $\xleftrightarrow[]{}$ Abstract Coherent State Quantization}
Here we will show that there is a an equivalence of categories between the category of (equal–time) propagators and the category of abstract coherent state quantizations.
More precisely:
\begin{theorem}\label{equivalence}
Let $(M,d\mu)$ be a manifold with a Borel measure.
There is a $\Delta$–preserving equivalence of categories between:
\begin{enumerate}
    \item Abstract coherent state quantizations $(M_{\hbar},q)$ such that
    \begin{equation}\label{extrac}
        \sup_{x\in M}\int_M \sqrt{\rho_x(q_y)}\,d\mu(y)<\infty\;\footnote{This is automatically satisfied for manifolds with finite volume.}
    \end{equation}
    \item Propagators $\Omega.$
\end{enumerate} 
\end{theorem}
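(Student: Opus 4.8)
The plan is to construct explicit functors in both directions and exhibit natural isomorphisms, throughout invoking \cref{important} to assume $M_{\hbar}=\textup{B}(\mathcal{H})$ for a separable Hilbert space. The two constructions are mutually reverse instances of the reproducing-kernel/coherent-state dictionary: from a propagator one builds the Hilbert space $\mathcal{H}_{\Omega}:=\textup{Im}\,P_{\Omega}\subset L^2(M,\mathcal{L})$ on which $\Omega$ is the reproducing kernel (see \cref{rightside}), while from a quantization one builds a Hermitian line bundle $\mathcal{L}\to M$ with fiber $\mathcal{L}_x:=\textup{Im}\,q_x\subset\mathcal{H}$ and propagator $\Omega(x,y):=q_yq_x\vert_{\mathcal{L}_x}$. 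I expect the object-level verifications to be routine given the lemmas of \cref{pman}; the real work is matching morphisms and producing the natural isomorphisms, where the $U(1)$-gauge ambiguity of line bundles meets the phase ambiguity of spatial implementations of $W^*$-isomorphisms.

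For the direction $\Omega\mapsto(M_{\hbar},q)$, I set $M_{\hbar}=\textup{B}(\mathcal{H}_{\Omega})$ and define $q_x\Psi:=\Psi(x)\,\Omega(x,\cdot)$. First I would note that $\Omega(x,\cdot)\in\mathcal{H}_{\Omega}$ by condition \ref{conditionf}, that $q_x^2=q_x$ by condition \ref{nor}, and that $\langle q_x\Psi,\Phi\rangle=\langle\Psi(x),\Phi(x)\rangle$ for $\Psi,\Phi\in\mathcal{H}_{\Omega}$ using condition \ref{orth} together with the reproducing identity $P_{\Omega}\Phi=\Phi$; this yields the projection axiom, with $q_x\ne 0$ from $\Omega(x,x)=\mathds{1}$. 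Continuity of $x\mapsto q_x$ comes from \cref{boundedd}, injectivity from condition \ref{norm2} (if $q_x=q_{x'}$ then $\Omega(x,\cdot)$ and $\Omega(x',\cdot)$ are parallel unit vectors by \cref{norma}, forcing $|\Omega(x,x')|=1$), and the overcompleteness axiom is exactly $P_{\Omega}=\int_M q_x\,d\mu(x)$ read off from \cref{rightside}. Finally $\rho_x(q_y)=|\Omega(x,y)|^2$, so the extra hypothesis \ref{extrac} is literally condition \ref{bounded}.

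For the direction $(M_{\hbar},q)\mapsto\Omega$, with $\mathcal{L}_x=\textup{Im}\,q_x$ (the pullback of the tautological bundle over $\textup{P}(\mathcal{H})$) and $\Omega(x,y)=q_yq_x\vert_{\mathcal{L}_x}$, conditions \ref{nor} and \ref{orth} are immediate from $q_x^2=q_x=q_x^*$, condition \ref{norm2} follows because $|\Omega(x,y)|=|\langle\psi_x,\psi_y\rangle|<1$ unless $q_x=q_y$ (i.e. $x=y$), and condition \ref{conditionf} is the overcompleteness axiom $\int_M q_yq_zq_x\,d\mu(z)=q_y\bigl(\int_M q_z\,d\mu(z)\bigr)q_x=q_yq_x$. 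Since $|\Omega(x,y)|=\sqrt{\rho_x(q_y)}$, condition \ref{bounded} is again exactly \ref{extrac}. Both functors are $\Delta$-preserving: a direct computation gives $\rho_x(q_yq_z)=\langle\psi_x,\psi_y\rangle\langle\psi_y,\psi_z\rangle\langle\psi_z,\psi_x\rangle$, which matches $\Omega(x,y)\Omega(y,z)\Omega(z,x)$ up to the fixed $S_3$/conjugation convention.

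To see the functors are quasi-inverse I would use the coherent-state transform $U:\mathcal{H}\to\mathcal{H}_{\Omega}$, $\,(U\psi)(x):=q_x\psi\in\mathcal{L}_x$, and show it is a unitary onto $\textup{Im}\,P_{\Omega}$ intertwining the two copies of $q_x$: isometry is $\|U\psi\|^2=\int_M\langle\psi,q_x\psi\rangle\,d\mu(x)=\|\psi\|^2$ by overcompleteness, and its range is $\textup{Im}\,P_{\Omega}$ because $U\psi$ satisfies the reproducing identity. In the other order, the line bundle $\textup{Im}\,q_x$ built from $\mathcal{H}_{\Omega}$ is canonically identified with $\mathcal{L}$ via $l_x\mapsto l_x\Omega(x,\cdot)$, and the recovered propagator is $\Omega$ on the nose. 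The delicate part, and where I expect to spend the most care, is functoriality on morphisms: a morphism of quantizations is a $W^*$-isomorphism $\pi:\textup{B}(\mathcal{H}_1)\to\textup{B}(\mathcal{H}_2)$ with $\pi(q_{1,x})=q_{2,x}$, which by spatiality of $*$-isomorphisms of type $\textup{I}$ factors is implemented by a unitary $u:\mathcal{H}_1\to\mathcal{H}_2$ unique up to a phase; this phase freedom is precisely the $U(1)$-gauge freedom of a unitary line-bundle isomorphism $\mathcal{L}_1\to\mathcal{L}_2$ intertwining the propagators, so the two notions of morphism correspond and $U$ is natural. Verifying that this correspondence is well defined and functorial modulo the coherent $U(1)$-ambiguity, compatibly with the measure-theoretic subtleties of pointwise evaluation on $\textup{Im}\,P_{\Omega}$, is the crux of the argument.
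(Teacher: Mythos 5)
Your proposal is correct and follows essentially the same route as the paper: the forward functor via $\mathcal{H}_{\textup{phy}}=\textup{Im}\,P_{\Omega}$ with $q_x\Psi=\Psi(x)\Omega(x,\cdot)$, the backward functor via the pullback of the tautological bundle with $v_x\Omega(x,y)=q_yv_x$, the identification $\sqrt{\rho_x(q_y)}=|\Omega(x,y)|$ matching condition 5 with the hypothesis \ref{extrac}, and the coherent-state transform $v\mapsto(x\mapsto q_xv)$ as the unitary witnessing quasi-inverseness. If anything, you are more explicit than the paper about the morphism-level correspondence (spatial implementation of the $W^*$-isomorphism versus the $U(1)$ gauge of a bundle isometry), which the paper only records in one direction.
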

In the $1\to 2$ direction of this equivalence, the cohomology class determined by the 3-point function of $(M_{\hbar},q)$ is the first Chern class of the line bundle associated to it, which is the pullback of the canonical bundle. In the $2\to 1$ direction, $M$ is embedded into $\textup{P}(\mathcal{H}_{\textup{phy}})$ and $(\mathcal{L},\nabla_{\Omega})\to M$ is the pullback of the canonical bundle with the Fubini–Study connection.
\\\\We will take the morphisms in the underlying categories to simply be isomorphisms:
\begin{definition}
    Given two abstract coherent state quantizations of $(M_{1,\hbar},q_1),\,(M_{2,\hbar},q_2)$ of $(M,d\mu),$ an isomorphism between them is a morphism of $W^*$–algebras 
\begin{equation}
    \pi:M_{1,\hbar}\to M_{2,\hbar}
\end{equation}
such that $q_2=\pi\circ q_1.$ 
\end{definition}
Similarly,
\begin{definition}
An isomorphism of (equal–time) propagators is a fiberwise isometry of line bundles that fiberwise commutes with the propagators.
\end{definition}
\subsection{Propagator $\to$ Abstract Coherent State Quantization}
We discuss some basic properties of the propagator and its associated representation. After doing this we will deduce the forward direction of the desired equivalence.
\begin{definition}
We define the physical Hilbert space $\mathcal{H}_{\textup{phy}}$ to be the image of $P_{\Omega}\,.$
\end{definition}
\begin{proposition}\label{ae}
For all $\Psi\in\mathcal{H}_{\textup{phys}},$ $\Psi$ is essentially bounded and has a continuous representative.
\end{proposition}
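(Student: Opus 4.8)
The plan is to exploit the reproducing property that characterizes $\mathcal{H}_{\textup{phy}}$: since $\Psi$ lies in the image of the orthogonal projection $P_{\Omega}$, we have $\Psi = P_{\Omega}\Psi$, i.e.
\begin{equation}
\Psi(x)=\int_M \Psi(y)\,\Omega(y,x)\,d\mu(y)
\end{equation}
for almost every $x\in M$. I would take this integral formula as the \emph{definition} of a pointwise representative $\tilde\Psi(x)$ for \emph{every} $x$, and then prove separately that $\tilde\Psi$ is bounded and continuous; since $\tilde\Psi=\Psi$ almost everywhere, this settles both claims simultaneously.

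For essential boundedness, I would apply H\"{o}lder's inequality to the defining integral via the splitting $|\Psi(y)|\,|\Omega(y,x)|=\big(|\Psi(y)|\,|\Omega(y,x)|^{1/2}\big)\,|\Omega(y,x)|^{1/2}$, giving
\begin{equation}
|\tilde\Psi(x)|\le\Big(\int_M|\Psi(y)|^2|\Omega(y,x)|\,d\mu(y)\Big)^{1/2}\Big(\int_M|\Omega(y,x)|\,d\mu(y)\Big)^{1/2}\;.
\end{equation}
In the first factor I would use condition 2 together with condition 1, which give $|\Omega(y,x)|\le 1$, to bound it by $\|\Psi\|_2$; in the second factor I would use condition 3 to replace $|\Omega(y,x)|$ by $|\Omega(x,y)|$ and then condition 5 to bound it by $C:=\sup_{x\in M}\int_M|\Omega(x,y)|\,d\mu(y)<\infty$. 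This yields $\|\tilde\Psi\|_{\infty}\le C^{1/2}\|\Psi\|_2$, so $\Psi$ is essentially bounded.

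For the continuous representative, I would work in a local unitary trivialization of $\mathcal{L}$ over a neighborhood of a fixed point $x_0$, in which $\Omega(y,x)$ and $\Psi$ become ordinary $\mathbb{C}$-valued functions and $\tilde\Psi(x)$ becomes the integral pairing of $\Psi$ against $\Omega(\cdot,x)$. Cauchy--Schwarz then gives
\begin{equation}
|\tilde\Psi(x)-\tilde\Psi(x')|\le\|\Psi\|_2\,\big\|\Omega(\cdot,x)-\Omega(\cdot,x')\big\|_{L^2(M,\mathcal{L})}\;,
\end{equation}
so it suffices to show that $x\mapsto\Omega(\cdot,x)$ is continuous into $L^2(M,\mathcal{L})$. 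This is essentially the content of \cref{boundedd} after using the Hermiticity condition 3 to interchange the roles of the two arguments: expanding $\|\Omega(\cdot,x)-\Omega(\cdot,x')\|_2^2$, one evaluates the two diagonal terms by \cref{norma} (each equal to $1$) and the cross term by conditions 3 and 4, which reduce it to the local representative of $\Omega(x,x')$; by continuity of $\Omega$ and condition 1 this tends to $\Omega(x,x)=\mathds{1}$, so the norm collapses to $1+1-2=0$ as $x'\to x$.

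The main obstacle I anticipate is purely the line-bundle bookkeeping rather than any analytic difficulty: because $\tilde\Psi(x)\in\mathcal{L}_x$ and $\Omega(\cdot,x)$ takes values in fibers twisted by $\mathcal{L}_x$, one cannot subtract values lying in distinct fibers directly, and the comparison must be carried out after trivializing $\mathcal{L}$ near $x_0$. Once this identification is in place, both the boundedness estimate and the $L^2$-continuity of $x\mapsto\Omega(\cdot,x)$ follow routinely from conditions 1--5 together with \cref{norma} and \cref{boundedd}.
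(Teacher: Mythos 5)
Your proposal is correct and takes essentially the same route as the paper: both take the reproducing formula $\Psi=P_{\Omega}\Psi$ as the definition of a pointwise representative and derive its continuity from the $L^2$–continuity of $x\mapsto\Omega(\cdot,x)$, which is exactly the content of \cref{boundedd}. The only cosmetic difference is the boundedness step, where the paper writes $|\Psi(x)|=\|q_x\Psi\|_2\le\|\Psi\|_2$ using that $q_x$ is an orthogonal projection (equivalently, Cauchy--Schwarz against $\|\Omega(x,\cdot)\|_2=1$ from \cref{norma}), obtaining the sharp constant $1$ in place of your $C^{1/2}$ from condition 5; both estimates are valid.
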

\begin{proof}
This follows from 
\begin{equation}
    \Psi(\cdot)=\int_M \Psi(y)\Omega(y,\cdot)\,d\mu(y)\;.
\end{equation}
That the right side is continuous follows from \cref{boundedd}. That $\Psi$ is essentially bounded follows from the next proposition, since 
\begin{equation}
    |\Psi(x)|=|\Psi(x)|\|\Omega(x,\cdot)\|_2=\|q_x\Psi\|_2\le \|\Psi\|_2\;,
\end{equation}
where the final inequality follows from the fact that orthogonal projections have norm $1.$
\end{proof}
In particular, this means that pointwise evaluation of sections in $\mathcal{H}_{\textup{phy}}$ is continuous. Such Hilbert spaces are commonly called reproducing kernel Hilbert spaces. 
\begin{proposition}
For each $z\in M,$ the operator $q_z\in B(\mathcal{H}_{\textup{phy}})$ given by
\begin{equation}\label{make}
   q_z\Psi(x)=\Psi(z)\Omega(z,x)
\end{equation}
is a rank-one orthogonal projection, with eigenspace given by $x\mapsto l_z\Omega(z,x)$ for $l_z\in \mathcal{L}_z.$
\end{proposition}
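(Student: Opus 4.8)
The plan is to verify in turn that $q_z$ is a well-defined bounded operator on $\mathcal{H}_{\textup{phy}}$, that it is idempotent and self-adjoint, and finally that its range is the claimed line. First I would check well-definedness: by \cref{ae} pointwise evaluation $\Psi\mapsto\Psi(z)$ is a bounded functional on $\mathcal{H}_{\textup{phy}}$ with $|\Psi(z)|\le\|\Psi\|_2$, and since $\int_M|\Omega(z,x)|^2\,d\mu(x)=1$ by \cref{norma}, the section $x\mapsto\Psi(z)\Omega(z,x)$ lies in $L^2(M,\mathcal{L})$ with $\|q_z\Psi\|_2=|\Psi(z)|\le\|\Psi\|_2$. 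To see that $q_z\Psi$ actually lands in $\mathcal{H}_{\textup{phy}}$, I would apply $P_\Omega$ and invoke the reproducing property (condition 4): $P_\Omega(q_z\Psi)(x)=\Psi(z)\int_M\Omega(z,y)\Omega(y,x)\,d\mu(y)=\Psi(z)\Omega(z,x)$, so $q_z\Psi$ is fixed by $P_\Omega$ and hence in its image.

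For idempotency I would evaluate $q_z\Psi$ at the point $z$ and use $\Omega(z,z)=\mathds{1}$ (condition 1) to get $(q_z\Psi)(z)=\Psi(z)$; feeding this back gives $q_z^2\Psi(x)=(q_z\Psi)(z)\Omega(z,x)=\Psi(z)\Omega(z,x)=q_z\Psi(x)$. The substantive step is self-adjointness. For $\Psi,\Phi\in\mathcal{H}_{\textup{phy}}$ I would expand $\langle q_z\Psi,\Phi\rangle=\int_M\langle\Psi(z)\Omega(z,x),\Phi(x)\rangle_{\mathcal{L}_x}\,d\mu(x)$, move the fiberwise adjoint across using condition 3 (which gives $\Omega(z,x)^*=\Omega(x,z)$), pull the $x$-independent factor $\Psi(z)$ out of the integral, and recognize the remaining integral $\int_M\Phi(x)\Omega(x,z)\,d\mu(x)$ as $P_\Omega\Phi(z)=\Phi(z)$. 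This collapses the expression to $\langle\Psi(z),\Phi(z)\rangle_{\mathcal{L}_z}$, which is conjugate-symmetric in $\Psi$ and $\Phi$, so $\langle q_z\Psi,\Phi\rangle=\langle\Psi,q_z\Phi\rangle$ and $q_z$ is an orthogonal projection.

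Finally, rank one: since $\Psi(z)$ ranges over the one-dimensional fiber $\mathcal{L}_z$, the range of $q_z$ is contained in the line $\{x\mapsto l_z\Omega(z,x):l_z\in\mathcal{L}_z\}$, and this line is nonzero and actually attained because $\Omega(z,\cdot)$ itself lies in $\mathcal{H}_{\textup{phy}}$ (again by condition 4) and is fixed by $q_z$, its value at $z$ being $\mathds{1}$. I expect the main obstacle to be the self-adjointness computation: getting the Hermitian bookkeeping correct — the adjoint relation of condition 3 and the right action of $\Omega(z,x)\in\textup{Hom}(\mathcal{L}_z,\mathcal{L}_x)$ on fibers — and justifying both the interchange of the fiber inner product with the integral and the use of $\Phi$'s reproducing property. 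All the integrability needed for these manipulations is already furnished by the Cauchy–Schwarz/H\"{o}lder estimates used to build $P_\Omega$ together with condition 5.
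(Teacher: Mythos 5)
Your proof is correct, but it is organized differently from the paper's. You verify the projection axioms one at a time directly from the formula $q_z\Psi(x)=\Psi(z)\Omega(z,x)$: boundedness and the fact that $q_z$ lands in $\mathcal{H}_{\textup{phy}}$ via $P_\Omega$ and condition \ref{conditionf}, idempotency via $\Omega(z,z)=\mathds{1}$, self-adjointness by moving the fiberwise adjoint across with condition \ref{orth} and then invoking the reproducing property of $\Phi$, and rank one from the one-dimensionality of $\mathcal{L}_z$. The paper instead goes the other way: it takes the \emph{a priori} rank-one orthogonal projection onto the normalized vector $v_z=l_z\Omega(z,\cdot)$ (normalized by \cref{norma}), namely $\Psi\mapsto \langle v_z,\Psi\rangle\,v_z$, and shows by a single computation — using condition \ref{orth} and $\int_M\Psi(y)\Omega(y,z)\,d\mu(y)=\Psi(z)$ — that this projection is given by the formula \eqref{make}; all the operator-theoretic properties are then inherited for free. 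Your self-adjointness step is essentially the paper's computation read in reverse, so the core manipulation is the same; what your route buys is that you never need to presuppose which vector spans the range, while the paper's route is shorter. One caveat: you cite \cref{ae} for the bound $|\Psi(z)|\le\|\Psi\|_2$, but the paper's proof of \cref{ae} itself forward-references the present proposition (it uses that $q_x$ is an orthogonal projection of norm $1$), so as written this is circular. The fix is immediate and you essentially already have it: for $\Psi\in\mathcal{H}_{\textup{phy}}$ the reproducing identity $\Psi(z)=\int_M\Psi(y)\Omega(y,z)\,d\mu(y)$ together with Cauchy--Schwarz and $\|\Omega(\cdot,z)\|_2=1$ gives $|\Psi(z)|\le\|\Psi\|_2$ without any reference to $q_z$.
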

Here, we are assuming $\Psi$ is a continuous representative to make sense of \cref{make}.
\begin{proof}
Let $l_z\in\mathcal{L}_z$ be normalized. We can see that $q_z$ is an orthogonal projection onto $x\mapsto l_z\Omega(z,x),$ since 
\begin{align}
    & l_z\Omega(z,x)\int_M \langle l_z\Omega(z,y),\Psi(y)\rangle_{\mathcal{L}_y} \,d\mu(y)
    \\& =l_z\Omega(z,x)\int_M \langle l_z,\Psi(y)\Omega(y,z)\rangle_{\mathcal{L}_z} \,d\mu(y)\;\;\textup{ (by condition }\ref{orth})
    \\&=l_z\Omega(z,x)\,\langle l_z,\int_M \Psi(y)\Omega(y,z)\, d\mu(y)\rangle_{\mathcal{L}_z}
    \\& =l_z\Omega(z,x)\,\langle l_z,\Psi(z)\rangle_{\mathcal{L}_z}\;\;\textup{ (since } \Psi\in\mathcal{H}_{\textup{phy}})
    \\&=\Psi(z)\Omega(z,x)\;.
\end{align}
\end{proof}
\begin{lemma}\label{wea}
For $\Psi\in\mathcal{H}_{\textup{phy}},$    $\langle\Psi,q_x\Psi\rangle=|\Psi(x)|^2.$
\end{lemma}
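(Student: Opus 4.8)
The goal is to show that for $\Psi\in\mathcal{H}_{\textup{phy}}$, one has $\langle\Psi,q_x\Psi\rangle=|\Psi(x)|^2$, where $q_x$ is the rank-one orthogonal projection from the previous proposition. The plan is to simply unwind the definitions of the inner product on $L^2(M,\mathcal{L})$ and of the operator $q_x$, and then invoke the reproducing property of $\mathcal{H}_{\textup{phy}}$.

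First I would write out the inner product explicitly. By definition of $q_x$ in \cref{make}, we have $q_x\Psi(y)=\Psi(x)\Omega(x,y)$, so
\begin{align}
\langle\Psi,q_x\Psi\rangle=\int_M \langle\Psi(y),q_x\Psi(y)\rangle_{\mathcal{L}_y}\,d\mu(y)=\int_M \langle\Psi(y),\Psi(x)\Omega(x,y)\rangle_{\mathcal{L}_y}\,d\mu(y)\;.
\end{align}
The key move is to pull the scalar $\Psi(x)$ (really the fiber element $\Psi(x)\in\mathcal{L}_x$) out using condition \ref{orth}, namely $\Omega(x,y)=\Omega^*(y,x)$, to rewrite the integrand as $\langle\Psi(y)\Omega(y,x),\Psi(x)\rangle_{\mathcal{L}_x}$. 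This turns the expression into
\begin{align}
\langle\Psi,q_x\Psi\rangle=\Big\langle\int_M \Psi(y)\Omega(y,x)\,d\mu(y),\Psi(x)\Big\rangle_{\mathcal{L}_x}\;.
\end{align}
Since $\Psi\in\mathcal{H}_{\textup{phy}}$ is fixed by $P_{\Omega}$, the integral equals $\Psi(x)$ by \cref{rightside}, giving $\langle\Psi,q_x\Psi\rangle=\langle\Psi(x),\Psi(x)\rangle_{\mathcal{L}_x}=|\Psi(x)|^2$.

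This is essentially a one-line computation once the bookkeeping of the Hermitian structure is in place, so I do not anticipate a serious obstacle; the only point requiring care is the fiberwise interpretation of the conjugate-linearity when moving $\Psi(x)$ across the inner product via condition \ref{orth}, together with the justification (already supplied by the integrability estimates in the proof that $P_\Omega$ is well-defined) that Fubini applies so the inner product and the integral can be interchanged. An alternative, even shorter route would be to note that $q_x$ is the orthogonal projection onto the line spanned by the normalized section $v_x(\cdot)=l_x\Omega(x,\cdot)$, so $\langle\Psi,q_x\Psi\rangle=|\langle v_x,\Psi\rangle|^2$, and then observe that $\langle v_x,\Psi\rangle=\langle l_x,\Psi(x)\rangle_{\mathcal{L}_x}$ by the same reproducing computation used in the preceding proposition; choosing $l_x$ normalized yields $|\langle v_x,\Psi\rangle|^2=|\Psi(x)|^2$.
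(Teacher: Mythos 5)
Your computation is exactly the paper's proof: expand the inner product, use condition \ref{orth} to move $\Omega$ onto $\Psi(y)$, and apply the reproducing property $\int_M\Psi(y)\Omega(y,x)\,d\mu(y)=\Psi(x)$ for $\Psi\in\mathcal{H}_{\textup{phy}}$. The proposal is correct and matches the paper's argument.
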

\begin{proof}
\begin{align}
&\langle\Psi,q_x\Psi\rangle= \int_M \langle\Psi(y),\Psi(x)\Omega(x,y)\rangle\,d\mu(y)
  = \int_M\langle\Psi(y)\Omega(y,x),\Psi(x)\rangle\,d\mu(y)
  =|\Psi(x)|^2.
\end{align}
\end{proof}
The following shows that, indeed, the eigenvectors of $q_x$ are maximally localized at $x.$
\begin{corollary}
Let $\Psi\in\mathcal{H}_{\textup{phy}}$ be normalized. Then $|\Psi(x)|^2\le 1$ and $|\Psi(x)|^2= 1$ if and only if $\Psi$ is an eigenvector of $q_x.$
\end{corollary}
\begin{proof}
This follows from \cref{wea} and the Cauchy–Schwarz inequality, using that $q_x$ has norm 1 and noting that equality in Cauchy–Schwarz occurs if and only if $q_x\Psi=\lambda\Psi.$
\end{proof}
\begin{proposition}
The map $M\to B(\mathcal{H}_{\textup{phys}}),\,x\mapsto q_x$ is continuous.
\end{proposition}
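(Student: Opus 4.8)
The goal is to show that $x\mapsto q_x$ is continuous as a map $M\to B(\mathcal{H}_{\textup{phy}})$. Since $B(\mathcal{H}_{\textup{phy}})$ carries the operator norm topology by default here, the natural plan is to bound $\|q_x-q_y\|_{\textup{op}}$ in terms of data that we already know varies continuously in $x,y$. The most efficient route is to recall that $q_x$ and $q_y$ are \emph{rank-one} orthogonal projections, with explicit unit eigenvectors $v_x(\cdot)=l_x\Omega(x,\cdot)$ and $v_y(\cdot)=l_y\Omega(y,\cdot)$ (for normalized $l_x\in\mathcal{L}_x$, $l_y\in\mathcal{L}_y$). For rank-one projections onto unit vectors there is a standard clean identity: $\|q_x-q_y\|_{\textup{op}}=\sqrt{1-|\langle v_x,v_y\rangle|^2}$. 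So the whole problem reduces to showing that $|\langle v_x,v_y\rangle|\to 1$ as $y\to x$.

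\textbf{Key steps.} First I would fix $x$ and a normalized $l_x\in\mathcal{L}_x$, and for $y$ near $x$ choose a normalized $l_y$ continuously (possible locally since $\mathcal{L}$ is a line bundle and $\Omega(x,x)=\mathds{1}$ lets us transport frames). Next I would compute the overlap using conditions \ref{orth} and \ref{conditionf}: as in the (commented-out) computation in the excerpt,
\begin{equation}
\langle v_x,v_y\rangle=\int_M\langle l_x\Omega(x,z)\Omega(z,y),l_y\rangle\,d\mu(z)=\langle l_x\Omega(x,y),l_y\rangle.
\end{equation}
Then $|\langle v_x,v_y\rangle|=|\Omega(x,y)|$, which by condition \ref{nor} equals $1$ at $y=x$ and, since $\Omega$ is a continuous section, tends to $1$ as $y\to x$. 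Plugging into the projection-difference identity gives
\begin{equation}
\|q_x-q_y\|_{\textup{op}}=\sqrt{1-|\Omega(x,y)|^2}\xrightarrow[y\to x]{}0,
\end{equation}
which is exactly norm-continuity at $x$. Since $x$ was arbitrary, continuity on all of $M$ follows. An alternative, if one prefers the weak/ultraweak topology (which is all that is really needed downstream), is to use \cref{wea} together with \cref{boundedd}: the map $x\mapsto \Psi(x)\Omega(x,\cdot)$ is continuous into $L^2(M,\mathcal{L})$, so $x\mapsto\langle\Psi,q_x\Psi\rangle=|\Psi(x)|^2$ is continuous for every continuous $\Psi\in\mathcal{H}_{\textup{phy}}$, giving weak continuity directly; but the norm estimate above is stronger and just as short.

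\textbf{Main obstacle.} The only genuinely delicate point is the continuous local choice of the normalized eigenvector $v_y$, i.e.\ choosing the frame $l_y$ continuously; the projection $q_y$ itself is canonical (independent of the $S^1$ phase of $l_y$), so the $\|q_x-q_y\|$ formula does not actually depend on that choice, and one must be slightly careful to phrase the overlap computation so the arbitrary phase cancels — which it does, since only $|\langle v_x,v_y\rangle|=|\Omega(x,y)|$ enters. A secondary technicality is justifying the identity $\|q_x-q_y\|=\sqrt{1-|\langle v_x,v_y\rangle|^2}$; this is elementary linear algebra on the (at most two-dimensional) span of $v_x,v_y$, where $q_x-q_y$ is a self-adjoint operator whose nonzero eigenvalues are $\pm\sqrt{1-|\langle v_x,v_y\rangle|^2}$. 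I would state this as a one-line lemma rather than grind through it. Everything else is an immediate consequence of conditions \ref{nor}, \ref{orth}, \ref{conditionf} and the continuity of $\Omega$.
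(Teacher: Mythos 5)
Your proof is correct and rests on the same two ingredients as the paper's: continuity of the eigenvector family $y\mapsto l_y\Omega(y,\cdot)$ (equivalently, of the overlap $\langle v_x,v_y\rangle=\langle l_x\Omega(x,y),l_y\rangle$, computed exactly as you do from conditions \ref{orth} and \ref{conditionf}) together with the passage from unit vectors to their rank-one projections — the paper simply cites \cref{boundedd} and the norm-continuity of $v\mapsto |v\rangle\langle v|$. Your explicit identity $\|q_x-q_y\|=\sqrt{1-|\Omega(x,y)|^2}$ is a clean quantitative sharpening of the same argument, and your observation that the phase of $l_y$ cancels is right, so the ``obstacle'' you flag is not actually one.
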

\begin{proof}
This follows from \cref{boundedd} and the fact that the map $\mathcal{H}\to \textup{B}(\mathcal{H})$ taking a vector to its associated orthogonal projection is continuous.
\end{proof}
\begin{proposition}
The map \begin{equation}
  Q\vert_{L^{\infty}(M)}:L^{\infty}(M)\to B(\mathcal{H}_{\textup{phys}})\;,\;\;Q_f=\int_M f(z)q(z)\,d\mu(z)  
\end{equation} 
is equal to $P_{\Omega}M_f\,,$ where $M_f\in B(L^2(M,\mathcal{L}))$ is the multiplication operator $\Psi\mapsto f\Psi.$
\end{proposition}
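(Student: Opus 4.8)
The plan is to check the operator identity by applying both sides to an arbitrary $\Psi\in\mathcal{H}_{\textup{phys}}$ and comparing the output sections pointwise. Both operators land in $B(\mathcal{H}_{\textup{phys}})$: this is part of the hypothesis for $Q\vert_{L^{\infty}(M)}$, while $P_{\Omega}M_f$ restricts to $\mathcal{H}_{\textup{phys}}$ because $P_{\Omega}$ is the orthogonal projection of norm $1$ and $M_f$ is bounded by $\|f\|_{\infty}$. By \cref{ae} every element of $\mathcal{H}_{\textup{phys}}$ has a continuous representative on which pointwise evaluation is a bounded operation, so two such elements coincide precisely when their continuous representatives agree at every point. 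Hence it is enough to prove $(Q_f\Psi)(x)=(P_{\Omega}M_f\Psi)(x)$ for all $x\in M$.

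The right-hand side is immediate from the formula for $P_{\Omega}$ in \cref{rightside}: since $M_f\Psi=f\Psi$ we get
\begin{equation}
(P_{\Omega}M_f\Psi)(x)=\int_M f(y)\,\Psi(y)\,\Omega(y,x)\,d\mu(y)\;.
\end{equation}
For the left-hand side the subtlety is that $Q_f=\int_M f(z)\,q_z\,d\mu(z)$ converges only in the ultraweak topology, so one cannot evaluate it at a point by a naive interchange. Fix $x\in M$ and $l_x\in\mathcal{L}_x$. By \cref{ae} the functional $\Phi\mapsto\langle l_x,\Phi(x)\rangle$ is bounded on $\mathcal{H}_{\textup{phys}}$, hence, by Riesz, is represented by the inner product against a reproducing-kernel vector; consequently $A\mapsto\langle l_x,(A\Psi)(x)\rangle$ is a vector functional on $B(\mathcal{H}_{\textup{phys}})$ and therefore lies in the predual. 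Applying this ultraweakly continuous functional to the weak integral defining $Q_f$, and using $(q_z\Psi)(x)=\Psi(z)\,\Omega(z,x)$ from \cref{make}, we obtain
\begin{equation}
\langle l_x,(Q_f\Psi)(x)\rangle=\int_M f(z)\,\langle l_x,\Psi(z)\,\Omega(z,x)\rangle\,d\mu(z)=\Big\langle l_x,\int_M f(z)\,\Psi(z)\,\Omega(z,x)\,d\mu(z)\Big\rangle\;.
\end{equation}
As $l_x$ and $x$ are arbitrary, $(Q_f\Psi)(x)$ equals the right-hand side above after renaming the integration variable, which proves the identity.

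The only genuine obstacle is the interchange in the second paragraph, since the defining integral for $Q_f$ is merely ultraweak and cannot be fed a pointwise-evaluation functional unless that functional is known to be normal, i.e.\ ultraweakly continuous. The reproducing-kernel property of $\mathcal{H}_{\textup{phys}}$ furnished by \cref{ae} is exactly what promotes evaluation at $x$ to a vector functional, and hence to an element of the predual, legitimizing the exchange; the remaining steps are then routine applications of the explicit formulas for $q_z$ and $P_{\Omega}$.
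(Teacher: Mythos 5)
Your proposal is correct and follows essentially the same route as the paper's proof, which simply writes out $Q_f\Psi(x)=\int_M f(z)\Psi(z)\Omega(z,x)\,d\mu(z)=P_{\Omega}M_f\Psi(x)$ after noting that $M_f$ is bounded. The extra care you take in justifying the interchange — promoting pointwise evaluation to a vector functional in the predual via the reproducing-kernel property of \cref{ae} so that it can be passed through the ultraweak integral defining $Q_f$ — is a legitimate refinement of a step the paper leaves implicit, not a different argument.
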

\begin{proof}
We can see this by noting that $M_f$ is a bounded on $L^p(M,\mathcal{L})$ and writing out the operator:
\begin{equation}
    Q_f\Psi=\int_M f(z)\Psi(z)\Omega(z,x)\,d\mu(z)=P_{\Omega}M_f\Psi\;.
\end{equation}
\end{proof}
Therefore, in the context of  Berezin–Toeplitz quantization, $Q_f$ is the  Berezin–Toeplitz operator. 
\begin{lemma}
$\mathcal{H}_{\textup{phy}}$ is an irreducible representation of the $W^*$–algebra that is weakly generated by the image of $q.$ 
\end{lemma}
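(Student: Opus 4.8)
The plan is to establish irreducibility by showing that the only closed subspaces of $\mathcal{H}_{\textup{phy}}$ that are invariant under the generators $\{q_z\}_{z\in M}$ are $\{0\}$ and $\mathcal{H}_{\textup{phy}}$ itself. This suffices: since the $q_z$ are self-adjoint and weakly generate the algebra $\mathcal{A}$, the commutant of $\{q_z\}$ coincides with $\mathcal{A}'$, so a closed subspace is $\mathcal{A}$-invariant exactly when it is invariant under all $q_z$, and the absence of a proper nonzero invariant subspace is precisely irreducibility.

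First I would record the one structural fact about the reproducing kernels $e_z:=\big(x\mapsto l_z\Omega(z,x)\big)$, which span $\mathrm{im}(q_z)$. A computation using condition \ref{orth} together with the reproducing property $\Psi=P_\Omega\Psi$ (i.e.\ the defining property of $\mathcal{H}_{\textup{phy}}$) gives $\langle e_w,\Psi\rangle=\langle l_w,\Psi(w)\rangle_{\mathcal{L}_w}$ for every $\Psi\in\mathcal{H}_{\textup{phy}}$. Since elements of $\mathcal{H}_{\textup{phy}}$ have continuous representatives (\cref{ae}), a vector orthogonal to all $e_w$ must vanish identically, so $\overline{\mathrm{span}}\{e_w:w\in M\}=\mathcal{H}_{\textup{phy}}$. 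It is also worth noting, as motivation, that $q_w e_z$ is the kernel $x\mapsto l_z\Omega(z,w)\Omega(w,x)$, so $q_w$ carries $\mathrm{im}(q_z)$ into $\mathrm{im}(q_w)$; this is why the set introduced below is natural.

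Now let $V$ be a nonzero closed invariant subspace and set $S=\{w\in M:\mathrm{im}(q_w)\subseteq V\}$. Because each $\mathrm{im}(q_w)$ is one-dimensional and $V$ is invariant, $w\in S$ if and only if $q_w|_V\ne 0$, equivalently $P_V q_w=q_w$, where $P_V$ is the orthogonal projection onto $V$. I would then run an open--closed argument on $S$, in the spirit of the proof of \cref{important}. The set $S$ is nonempty: a nonzero $\Psi\in V$ has a continuous representative, so $\Psi(z)\ne 0$ for some $z$, whence $q_z\Psi\ne 0$ and $z\in S$. It is open because $w\mapsto q_w$ is continuous in operator norm (established above), so $q_w|_V\ne 0$ is an open condition. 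It is closed because the relation $P_V q_w=q_w$ passes to operator-norm limits of $q_w$, again by continuity of $w\mapsto q_w$. As $M$ is connected, $S=M$, so $V$ contains every $e_w$; combined with the density of the $e_w$ and the closedness of $V$, this forces $V=\mathcal{H}_{\textup{phy}}$.

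The routine inputs are the kernel identity, the reproducing/density statement, and the norm-continuity of $x\mapsto q_x$, all of which are already in hand. The one place that requires care — and the natural point of fragility — is the closedness of $S$: one cannot pass to the limit on the eigenvectors $e_w$ directly, since these are defined only up to a unit scalar in the possibly nontrivial line $\mathcal{L}_w$. Routing the limiting argument through the genuinely continuous, phase-independent family of projections $q_w$ via the identity $P_V q_w=q_w$ is what makes the step go through, and I expect this to be the main obstacle to phrase cleanly.
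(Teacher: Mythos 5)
Your proof is correct and follows essentially the same route as the paper's: a connectedness (open--closed) argument showing that a nonzero invariant subspace $V$ must satisfy $q_wV\ne\{0\}$ for every $w\in M$, hence contains all the reproducing kernels $e_w$, which are total in $\mathcal{H}_{\textup{phy}}$ since $\langle e_w,\Psi\rangle=\langle l_w,\Psi(w)\rangle$. Your version is in fact more carefully argued than the paper's (which delegates the open--closed step to an earlier cross-reference), and your device of phrasing closedness via the phase-independent identity $P_Vq_w=q_w$ is a clean way to handle the scalar ambiguity in the eigenvectors.
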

\begin{proof}
Let $V\subset \mathcal{H}_{\textup{phy}}$ be a subrepresentation. By \cref{non}, if there exists $x\in M$ such that $q_xV\ne \{0\},$ then $q_xV\ne \{0\}$ for all $x\in M.$ Suppose that $q_xV=\{0\}$ for all $x\in M.$ Then for $\Psi\in V,$ $\Psi(x)=0$ for all $x,$ implying that $V=\{0\}.$ Otherwise, $V=\mathcal{H}_{\textup{phy}}$ since $V$ must contain the eigenvectors of $q_x$ for all $x\in M,$ and these generate $\mathcal{H}_{\textup{phy}}.$ 
\end{proof} 
\begin{corollary}\label{weak}
The $W^*$–algebra weakly generated by the image of $q$ is $B(\mathcal{H}_{\textup{phy}}).$
\end{corollary}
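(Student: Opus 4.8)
The plan is to combine the immediately preceding lemma with the von Neumann bicommutant theorem. Write $\mathcal{A}$ for the $W^*$–algebra weakly generated by $q(M)$, viewed concretely as a weakly closed $^*$–subalgebra of $\textup{B}(\mathcal{H}_{\textup{phy}})$. Note first that $\mathds{1}\in\mathcal{A}$: this holds because the double commutant of any set of operators contains the identity, and in any case it follows directly from the overcompleteness axiom $\mathds{1}=\int_M q_x\,d\mu(x)$, which exhibits the identity as a weak limit of elements of the $^*$–algebra generated by the $q_x$. By the preceding lemma, $\mathcal{H}_{\textup{phy}}$ carries $\mathcal{A}$ as an irreducible representation, i.e. $\mathcal{A}$ admits no nontrivial closed invariant subspace.

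The next step is to translate this topological irreducibility into triviality of the commutant. Since every $q_x$ is self–adjoint, $\mathcal{A}$ is a $^*$–algebra, and hence $\mathcal{A}'$ is itself a von Neumann algebra. If $\mathcal{A}'$ contained an operator other than a scalar multiple of $\mathds{1}$, it would contain a nontrivial self–adjoint element, and by the spectral theorem a nontrivial projection $P\in\mathcal{A}'$; the range of $P$ would then be a nontrivial closed $\mathcal{A}$–invariant subspace, contradicting irreducibility. Therefore $\mathcal{A}'=\mathbb{C}\mathds{1}$.

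Finally, the bicommutant theorem yields $\mathcal{A}=\mathcal{A}''=(\mathbb{C}\mathds{1})'=\textup{B}(\mathcal{H}_{\textup{phy}})$, since every bounded operator commutes with the scalars. This is the desired conclusion. One could alternatively invoke \cref{cen}, whose hypotheses are met since $q(M)$ supplies a nonzero minimal projection and the argument of \cref{important} shows the center is trivial; however, \cref{cen} only provides an abstract isomorphism $\mathcal{A}\cong\textup{B}(\mathcal{H})$ for some $\mathcal{H}$, whereas the bicommutant route gives the sharper concrete identification with $\textup{B}(\mathcal{H}_{\textup{phy}})$ that the statement asserts.

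I do not anticipate a genuine obstacle here, since the essential work—constructing the irreducible representation—was already carried out in the preceding lemma via \cref{non} and the overcompleteness axiom. The only point demanding a little care is the passage from topological irreducibility to $\mathcal{A}'=\mathbb{C}\mathds{1}$: this relies crucially on $\mathcal{A}$ being closed under adjoints, so that invariant subspaces occur in orthogonal complementary pairs and thereby produce projections in the commutant. This is precisely where the self–adjointness of the projections $q_x$ enters.
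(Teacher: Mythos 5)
Your proof is correct, but it takes a different route from the paper's. The paper argues that the weakly generated $W^*$–algebra contains a rank–one projection, hence a nonzero compact operator; since $\mathcal{H}_{\textup{phy}}$ is an irreducible representation of it, a standard fact forces it to contain all compact operators, and a weakly closed subspace containing the compacts (indeed, the finite–rank operators, which are WOT–dense) must be all of $\textup{B}(\mathcal{H}_{\textup{phy}})$. You instead pass from topological irreducibility to triviality of the commutant (via the spectral theorem producing a projection in $\mathcal{A}'$) and then apply the bicommutant theorem, $\mathcal{A}=\mathcal{A}''=(\mathbb{C}\mathds{1})'=\textup{B}(\mathcal{H}_{\textup{phy}})$. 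Your argument never uses that the $q_x$ are rank–one or minimal, only self–adjointness and irreducibility, so it is slightly more general and avoids the compact–operator machinery; the paper's argument, by contrast, leans on the minimality of the projections, which is consonant with how \cref{cen} is used elsewhere in the text. One small point worth keeping explicit in your write–up: for the bicommutant theorem to give $\mathcal{A}=\mathcal{A}''$ you need $\mathcal{A}$ to act nondegenerately (equivalently, to contain $\mathds{1}$), and as you note this is supplied by the overcompleteness axiom, which exhibits $\mathds{1}$ as a weak limit of elements of the $^*$–algebra spanned by $q(M)$.
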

\begin{proof}
This is due to the fact that this $W^*$–algebra is a subspace of $B(\mathcal{H}_{\textup{phy}})$ that contains a compact operator (since it contains a rank–one projection), and since $\mathcal{H}_{\textup{phy}}$ is an irreducible representation of this $W^*$–algebra it must therefore contain all compact operators. Since a $W^*$-algebra containing all compact operators contains all bounded operators, the result follows. 
\end{proof}
\begin{remark}
For a simply connected symplectic manifold $(M,\omega),$ we formally have that 
\begin{equation}
\rho_{Q_f\ast Q_g}(x)=\int_{X\to M}\mathcal{D}X\,f(X(0))g(X(1))e^{\frac{i}{\hbar}\int_D X^*\omega}\;,
\end{equation}
where $0,\,1,\,\infty$ are cyclically ordered points on $\partial D$ and the path integral is over maps $X:D\to M$ such that $X(\infty)=x.$
\end{remark}
\subsubsection{Proof of $\to$ Direction of Equivalence}
Our $W^*$-algebra is $B(\mathcal{H}_{\textup{phy}})$ and it is weakly generated by $q,$ as shown in \cref{weak}. Furthermore, \cref{extrac} is satisfied because
\begin{equation}
    \sqrt{\rho_x(q_y)}=|\Omega(x,y)|\;.
\end{equation}
Also, $\Omega(x,y)\Omega(y,z)\Omega(z,x)=\rho_x(q_yq_z),$ so that the 3–point functions agree. We still need to show that:
\begin{lemma}
Weakly, 
\end{lemma}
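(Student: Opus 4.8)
The statement to be completed is the overcompleteness axiom for the quantization $(B(\mathcal{H}_{\textup{phy}}),q)$ built from the propagator, namely that, weakly, $\mathds{1}=\int_M q_x\,d\mu(x)$. Together with the facts already recorded just above (that the weakly generated algebra is $B(\mathcal{H}_{\textup{phy}})$ by \cref{weak}, that \cref{extrac} holds since $\sqrt{\rho_x(q_y)}=|\Omega(x,y)|$, and that the $3$-point functions agree), this is the last ingredient needed for the $\to$ direction. The plan is to reduce the weak identity to a statement about matrix elements and to identify those matrix elements with the $L^2$-inner product on $\mathcal{H}_{\textup{phy}}$.

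First I would upgrade \cref{wea} from a quadratic form to a sesquilinear one. Running the computation in the proof of \cref{wea} with two continuous representatives $\Psi,\Phi\in\mathcal{H}_{\textup{phy}}$, moving $\Omega(x,y)$ across the fiber inner product via condition \ref{orth} and using $P_{\Omega}\Psi=\Psi$, I expect the pointwise formula
\begin{equation}
\langle\Psi,q_x\Phi\rangle=\Big\langle\int_M\Psi(y)\Omega(y,x)\,d\mu(y),\,\Phi(x)\Big\rangle_{\mathcal{L}_x}=\langle\Psi(x),\Phi(x)\rangle_{\mathcal{L}_x},
\end{equation}
since the integral is exactly $P_{\Omega}\Psi(x)=\Psi(x)$. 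Integrability of $x\mapsto\langle\Psi,q_x\Phi\rangle$ is then immediate from Cauchy--Schwarz, $\int_M|\langle\Psi(x),\Phi(x)\rangle|\,d\mu\le\|\Psi\|_2\|\Phi\|_2<\infty$, and integrating the pointwise formula over $M$ gives $\int_M\langle\Psi,q_x\Phi\rangle\,d\mu(x)=\langle\Psi,\Phi\rangle$. This is precisely the weak identity tested against the rank-one functional $A\mapsto\langle\Psi,A\Phi\rangle$.

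To conclude the full weak statement I would invoke the predual characterization recorded after \cref{maind}: it suffices to check $s(\mathds{1})=\int_M s(q_x)\,d\mu(x)$ for all $s$ in a trace-norm-dense subset of the trace-class operators, the predual of $B(\mathcal{H}_{\textup{phy}})$. Finite-rank operators are such a subset, and on $s=|\Phi\rangle\langle\Psi|$ the identity $\int_M s(q_x)\,d\mu(x)=\langle\Psi,\Phi\rangle=\operatorname{Tr}(s)=s(\mathds{1})$ is exactly what was just proved. I would then propagate it to all trace-class $s$ through its singular-value decomposition $s=\sum_n\lambda_n|\phi_n\rangle\langle\psi_n|$, with $\sum_n|\lambda_n|=\|s\|_1$ and $\{\phi_n\},\{\psi_n\}$ orthonormal in $\mathcal{H}_{\textup{phy}}$; the bound $\int_M|s(q_x)|\,d\mu\le\sum_n|\lambda_n|\,\|\phi_n\|_2\|\psi_n\|_2=\|s\|_1$ licenses interchanging sum and integral and yields $\int_M s(q_x)\,d\mu=\sum_n\lambda_n\langle\psi_n,\phi_n\rangle=\operatorname{Tr}(s)=s(\mathds{1})$.

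The computations are short; the only place demanding care is this passage from rank-one functionals to the whole predual, where one must produce a uniform $L^1$-bound (the $\|s\|_1$ estimate above) to justify the limit. That estimate rests on \cref{norma}, which guarantees $\|q_x\Phi\|_2=|\Phi(x)|$ so that each matrix element is controlled by the $L^2$-norms of $\Psi$ and $\Phi$, so no analytic input beyond the propagator axioms is required.
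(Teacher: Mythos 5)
Your proof is correct and follows essentially the same route as the paper: the paper integrates the identity $\langle\Psi,q_x\Psi\rangle=|\Psi(x)|^2$ over $M$ against the diagonal positive normal functionals $A\mapsto\sum_i\langle\Psi_i,A\Psi_i\rangle$ that define its ultraweak topology, with Tonelli justified by nonnegativity of the terms. Your polarized version $\langle\Psi,q_x\Phi\rangle=\langle\Psi(x),\Phi(x)\rangle$ and the subsequent singular-value-decomposition extension to all trace-class functionals is just a slightly more elaborate bookkeeping of the same computation and the same $L^1$ bound.
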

\begin{equation}
\mathds{1}=\int_M q_x\,d\mu(x)\;.
\end{equation}
\begin{proof}
Let $\Psi_i\in \mathcal{H}_{\textup{phy}},\,i=1,2,\ldots$ be such that $\sum_{i=1}^{\infty}\|\Psi_i\|^2<\infty.$ Then using Fubini's theorem and \cref{wea},
\begin{align}
  &\int_M \sum_{i=1}^{\infty} |\langle\Psi_i,q_x\Psi_i\rangle|\,d\mu(x)=\sum_{i=1}^{\infty}\int_M  |\langle\Psi_i,q_x\Psi_i\rangle|\,d\mu(x)=
  \sum_{i=1}^{\infty} \|\Psi_i\|^2<\infty\;.
\end{align}
Note that, $\langle\Psi_i,q_x\Psi_i\rangle\ge 0.$ Therefore, this also shows that
\begin{align}
 \int_M \sum_{i=1}^{\infty} \langle\Psi_i,q_x\Psi_i\rangle\,d\mu(x)=\sum_{i=1}^{\infty} \|\Psi_i\|^2\;,
\end{align}
and this completes the proof.
\end{proof}
Finally, to complete this section we observe the following:
\begin{proposition}
If $h:\mathcal{L}_1\to\mathcal{L}_2$ is a fiberwise isometry of line bundles over $M$ and $\Omega_1,\,\Omega_2$ are propagators such that \begin{equation}
    \Omega_1(x,y)h(y)=h(x)\Omega_2(x,y)\;,
\end{equation}
then
\begin{equation}
B(\mathcal{H}_{\textup{phy},1})\to B(\mathcal{H}_{\textup{phy},2})\;,\;\;A\mapsto \big(\Psi\mapsto hAh^{-1}\Psi\big)
\end{equation}
is an isomorphism of $W^*$-algebras such that $hq_{1}h^{-1}=q_{2}.$
\end{proposition}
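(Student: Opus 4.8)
The plan is to realize the conjugation $A\mapsto hAh^{-1}$ as conjugation by a single unitary $U$ between the two physical Hilbert spaces, and then to invoke the standard fact that unitary conjugation is a $W^*$–isomorphism. First I would promote the fiberwise isometry $h$ to an operator on $L^2$–sections by setting $\tilde h:L^2(M,\mathcal{L}_1)\to L^2(M,\mathcal{L}_2)$, $(\tilde h\Psi)(x)=\Psi(x)h(x)$, using the right–action convention of the paper. Since $h$ is a fiberwise isometry and $d\mu$ is fixed, $\tilde h$ is a surjective isometry, hence a unitary isomorphism of Hilbert spaces, with inverse given by fiberwise application of $h^{-1}$.

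The crux of the argument is that $\tilde h$ intertwines the two orthogonal projections, i.e. $\tilde h P_{\Omega_1}=P_{\Omega_2}\tilde h$. For $\Psi\in L^2(M,\mathcal{L}_1)$ and $x\in M$, I would compute using \cref{rightside} that
\begin{equation}
(\tilde h P_{\Omega_1}\Psi)(x)=\Big(\int_M \Psi(y)\,\Omega_1(y,x)\,d\mu(y)\Big)h(x)=\int_M \Psi(y)\,\Omega_1(y,x)\,h(x)\,d\mu(y).
\end{equation}
Substituting $x\leftrightarrow y$ into the hypothesis $\Omega_1(x,y)h(y)=h(x)\Omega_2(x,y)$ yields $\Omega_1(y,x)h(x)=h(y)\Omega_2(y,x)$, so the integrand equals $\Psi(y)h(y)\Omega_2(y,x)=(\tilde h\Psi)(y)\Omega_2(y,x)$ and the integral is $(P_{\Omega_2}\tilde h\Psi)(x)$. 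Hence $\tilde h P_{\Omega_1}=P_{\Omega_2}\tilde h$, and since $\mathcal{H}_{\textup{phy},i}$ is the image of $P_{\Omega_i}$, the unitary $\tilde h$ carries $\mathcal{H}_{\textup{phy},1}$ onto $\mathcal{H}_{\textup{phy},2}$. Writing $U:=\tilde h\vert_{\mathcal{H}_{\textup{phy},1}}$ for the resulting unitary $\mathcal{H}_{\textup{phy},1}\to\mathcal{H}_{\textup{phy},2}$, the map in the statement is exactly $\Phi(A)=UAU^{-1}$, and conjugation by a unitary is a unital $^*$–isomorphism (with inverse $B\mapsto U^{-1}BU$) that is automatically ultraweakly continuous, so it is an isomorphism of $W^*$–algebras.

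It remains to verify the intertwining of coherent states, $\Phi(q_{1,z})=q_{2,z}$. For $\Xi\in\mathcal{H}_{\textup{phy},2}$ I would unwind $(U^{-1}\Xi)(y)=\Xi(y)h(y)^{-1}$, apply $q_{1,z}$ to get $\Xi(z)h(z)^{-1}\Omega_1(z,x)$, and then apply $U$ once more, obtaining
\begin{equation}
(U q_{1,z}U^{-1}\Xi)(x)=\Xi(z)\,h(z)^{-1}\,\Omega_1(z,x)\,h(x).
\end{equation}
Invoking $\Omega_1(z,x)h(x)=h(z)\Omega_2(z,x)$ again, the factors $h(z)^{-1}h(z)$ cancel and the expression reduces to $\Xi(z)\Omega_2(z,x)=(q_{2,z}\Xi)(x)$, which is precisely $h q_1 h^{-1}=q_2$. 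I do not expect a genuine obstacle here: the entire content is the one–line relation $\tilde h P_{\Omega_1}=P_{\Omega_2}\tilde h$, and the only point demanding care is bookkeeping of the right–action convention and the direction of $h$ on fibers, so that $h^{-1}$ returns a section of $\mathcal{L}_2$ into $\mathcal{L}_1$ before $q_{1,z}$ acts.
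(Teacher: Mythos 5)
Your proof is correct; the paper states this proposition as an observation without supplying any proof, and your argument fills in exactly the details one would expect: the unitary $\tilde h$ induced by the fiberwise isometry intertwines $P_{\Omega_1}$ with $P_{\Omega_2}$ via the hypothesis $\Omega_1(y,x)h(x)=h(y)\Omega_2(y,x)$, hence restricts to a unitary between the physical Hilbert spaces, and conjugation by that unitary is a normal $^*$–isomorphism carrying $q_{1,z}$ to $q_{2,z}$. The bookkeeping of the right–action convention is handled correctly throughout.
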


\subsection{Propagator $\xleftarrow[]{}$ Abstract Coherent State Quantization}
By \cref{important}, we may assume that $M_{\hbar}=\textup{B}(\mathcal{H}).$
\begin{definition}
Let $(\mathcal{L},\langle\cdot,\cdot\rangle)\to M$ be the line bundle with Hermitian metric obtained by pulling back $\mathcal{H}\setminus \{0\}\to \textup{P}(\mathcal{H})$ via $\pi\circ q.$
\end{definition}
We now define the propagator:
\begin{definition}
Define $\Omega:\pi_0^*\mathcal{L}^*\otimes\pi_1^*\mathcal{L}$
by
\begin{equation}
v_x\Omega(x,y):=\pi(q_y)v_x\;,
\end{equation}
where on the right we are identifying vectors in $\mathcal{L}_z$ with vectors in $\mathcal{H}$ projecting to $\pi(q_z)\in \textup{P}(\mathcal{H}),$ for $z\in M.$
\end{definition}
We have that \begin{equation}
\rho_x(q_yq_z)\pi(q_x)=\pi(q_x)\pi(q_y)\pi(q_z)\pi(q_x)=\Omega(x,z)\Omega(z,y)\Omega(y,x)\pi(q_x)\;,
\end{equation}
hence:
\begin{proposition}
 $\rho_x(q_yq_z)=\Omega(x,z)\Omega(z,y)\Omega(y,x).$
 \end{proposition}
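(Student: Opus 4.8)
The plan is to read off both equalities in the display preceding the statement and then cancel the nonzero projection $\pi(q_x)$. Since by \cref{important} we have identified $M_{\hbar}\cong\textup{B}(\mathcal{H})$ through the isomorphism $\pi$, applying $\pi$ to the defining relation $q_xq_yq_zq_x=\rho_x(q_yq_z)q_x$ immediately yields the middle equality
\begin{equation}
\pi(q_x)\pi(q_y)\pi(q_z)\pi(q_x)=\rho_x(q_yq_z)\pi(q_x)\;.
\end{equation}
So the entire content is to verify that this same operator equals $\Omega(x,z)\Omega(z,y)\Omega(y,x)\pi(q_x)$, after which the result follows because $q_x\ne 0$ forces $\pi(q_x)\ne 0$.

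First I would unwind the definition of $\Omega$. Fix a normalized $v_x\in\mathcal{L}_x$, regarded as a vector in $\mathcal{H}$ in the image of $\pi(q_x)$, so that $\pi(q_x)v_x=v_x$. Applying the defining relation $v\,\Omega(a,b)=\pi(q_b)v$ three times in succession I would track
\begin{equation}
\pi(q_z)v_x=v_x\Omega(x,z)\;,\qquad \pi(q_y)\big(v_x\Omega(x,z)\big)=v_x\Omega(x,z)\Omega(z,y)\;,
\end{equation}
and finally $\pi(q_x)\big(v_x\Omega(x,z)\Omega(z,y)\big)=v_x\Omega(x,z)\Omega(z,y)\Omega(y,x)$, where at each stage the intermediate vector lies in the appropriate fiber $\mathcal{L}_z$, then $\mathcal{L}_y$, then $\mathcal{L}_x$. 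Since the composite $\mathcal{L}_x\to\mathcal{L}_z\to\mathcal{L}_y\to\mathcal{L}_x$ is an endomorphism of the line $\mathcal{L}_x$, the product $\Omega(x,z)\Omega(z,y)\Omega(y,x)$ is a scalar, and the computation shows that $\pi(q_x)\pi(q_y)\pi(q_z)\pi(q_x)$ acts on $v_x$ as multiplication by this scalar.

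To finish I would check that the two operators agree on all of $\mathcal{H}$, not merely on $\mathcal{L}_x$: on the orthogonal complement of $\mathcal{L}_x$ the rightmost factor $\pi(q_x)$ annihilates the input, so the left-hand operator vanishes there, as does $\Omega(x,z)\Omega(z,y)\Omega(y,x)\pi(q_x)$. Hence the two operators coincide, giving
\begin{equation}
\rho_x(q_yq_z)\pi(q_x)=\Omega(x,z)\Omega(z,y)\Omega(y,x)\pi(q_x)\;,
\end{equation}
and cancelling the nonzero $\pi(q_x)$ yields the claim. There is no substantive obstacle here; the only care needed is bookkeeping with the identifications — confirming that the right-action convention $v\,\Omega(a,b)=\pi(q_b)v$ is applied in the correct order and that the triple composition genuinely returns a scalar multiple of the identity on the line $\mathcal{L}_x$.
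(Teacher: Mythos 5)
Your argument is correct and is essentially the paper's own proof: the paper records exactly the identity $\rho_x(q_yq_z)\pi(q_x)=\pi(q_x)\pi(q_y)\pi(q_z)\pi(q_x)=\Omega(x,z)\Omega(z,y)\Omega(y,x)\pi(q_x)$ and concludes by the same cancellation of the nonzero projection $\pi(q_x)$. Your write-up merely makes explicit the three applications of the defining relation $v\,\Omega(a,b)=\pi(q_b)v$ that the paper leaves implicit.
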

Therefore, the 3–point functions $\Delta$ defined in both categories agree.
\begin{proposition}
$\Omega(x,x)=\mathds{1},$ $|\Omega(x,y)|<1$ if $x\ne y$ and $\Omega(x,y)=\Omega^*(y,x).$
\end{proposition}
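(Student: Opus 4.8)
The plan is to verify the three propagator properties directly from the definition $v_x\Omega(x,y):=\pi(q_y)v_x$, where vectors in $\mathcal{L}_z$ are identified with vectors in $\mathcal{H}$ whose associated rank-one projection is $\pi(q_z)$. Throughout I would write $q$ for $\pi(q)$ to lighten notation, and I would fix normalized representatives $v_z\in\mathcal{L}_z$, so that $q_z=|v_z\rangle\langle v_z|$ in physics notation, i.e. $q_z w=\langle v_z,w\rangle v_z$ for all $w\in\mathcal{H}$.

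First I would dispatch $\Omega(x,x)=\mathds{1}$: since $v_x$ spans the image of the projection $q_x$, we have $q_x v_x=v_x$, so by definition $v_x\Omega(x,x)=q_x v_x=v_x$, which says $\Omega(x,x)$ is the identity map on $\mathcal{L}_x$. Next, for the Hermitian property $\Omega(x,y)=\Omega^*(y,x)$, I would compute the Hermitian pairing of the defining relation: for normalized $v_x\in\mathcal{L}_x$ and $v_y\in\mathcal{L}_y$, the scalar $\Omega(x,y)$ (relative to these frames) satisfies $v_y\,\Omega(x,y)=q_y v_x=\langle v_y,v_x\rangle v_y$, hence $\Omega(x,y)=\langle v_y,v_x\rangle$ as a number, and similarly $\Omega(y,x)=\langle v_x,v_y\rangle=\overline{\langle v_y,v_x\rangle}$. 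Since $\Omega$ is viewed as a map $\mathcal{L}_x\to\mathcal{L}_y$ built from this scalar, its adjoint $\Omega^*(y,x):\mathcal{L}_x\to\mathcal{L}_y$ carries the conjugate scalar in the opposite direction, and the two coincide; this is exactly condition \ref{orth}. I would phrase this carefully as an equality of elements of $\mathcal{L}_x^*\otimes\mathcal{L}_y$ versus $\mathcal{L}_y^*\otimes\mathcal{L}_x$ under the metric identifications, so that the statement is basis-independent.

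For the strict inequality $|\Omega(x,y)|<1$ when $x\ne y$, the expected main obstacle, I would argue by contradiction. From the computation above, $|\Omega(x,y)|=|\langle v_x,v_y\rangle|$ for normalized frames, which is $\le 1$ by Cauchy–Schwarz, with equality iff $v_x$ and $v_y$ are parallel, i.e. iff $q_x=q_y$ as rank-one projections, i.e. iff $\pi(q_x)=\pi(q_y)$. The key point is that $\pi(q_x)=\pi(q_y)$ forces $q_x=q_y$ in $M_\hbar$ (since $\pi$ is the faithful irreducible representation furnishing the isomorphism $M_\hbar\cong\textup{B}(\mathcal{H})$ of \cref{important}, it is injective), and then the injectivity of $q$ built into \cref{maind} gives $x=y$, contradicting $x\ne y$. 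I would make sure to invoke the faithfulness of the representation explicitly, since that is what transfers the separation of projections in $\textup{B}(\mathcal{H})$ back to injectivity of $x\mapsto q_x$; this is the only place the argument uses more than formal manipulation of the definition. Assembling the three items then completes the proof.
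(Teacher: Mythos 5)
Your proof is correct and follows essentially the same route as the paper: $\Omega(x,x)=\mathds{1}$ directly from the definition, the Hermitian property from $q_x^*=q_x$ (which you simply spell out via the scalar $\langle v_y,v_x\rangle$), and the strict inequality from the equality case of Cauchy--Schwarz combined with injectivity of $q$. Your explicit appeal to faithfulness of $\pi$ is a harmless refinement of what the paper leaves implicit after identifying $M_{\hbar}$ with $\textup{B}(\mathcal{H})$.
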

\begin{proof}
That $\Omega(x,x)=\mathds{1}$ follows from the definition and that $|\Omega(x,y)|<1$ if $x\ne y$ follows from the Cauchy-Schwarz inequality: for normalized vectors $v,\,w,$ 
\begin{equation}
    \langle v,w\rangle\le 1
    \end{equation}
and equality holds if and only if $v,\,w$ define the same orthogonal projection. Since $q$ is injective, the result follows. That $\Omega(x,y)=\Omega^*(y,x)$ follows from $q_x^*=q_x.$
\end{proof}
\begin{proposition}
Condition 5 holds.  
\end{proposition}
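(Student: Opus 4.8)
The plan is to reduce Condition 5 (\cref{bounded}) to the standing hypothesis \cref{extrac} of \cref{equivalence}, which is exactly the quantity it was designed to control. The key observation is that the pointwise norm $|\Omega(x,y)|$ equals $\sqrt{\rho_x(q_y)}$, so the integrals appearing in the two conditions are literally the same function of $x$.

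To make this precise, I would first specialize the $3$-point identity just proved, namely $\rho_x(q_yq_z)=\Omega(x,z)\Omega(z,y)\Omega(y,x)$, to the diagonal $z=y$. Using that $q_y$ is a projection ($q_y^2=q_y$, by the projection axiom), that $\Omega(y,y)=\mathds{1}$, and the Hermitian property $\Omega(y,x)=\Omega^*(x,y)$ established in the preceding proposition, one obtains
\begin{equation}
\rho_x(q_y)=\rho_x(q_y^2)=\Omega(x,y)\,\Omega(y,y)\,\Omega(y,x)=\Omega(x,y)\,\Omega^*(x,y)=|\Omega(x,y)|^2.
\end{equation}
Since $\rho_x$ is a state and $q_y=q_y^*q_y\ge 0$, the left-hand side is nonnegative, so taking the nonnegative square root gives $|\Omega(x,y)|=\sqrt{\rho_x(q_y)}$ unambiguously.

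With this identity in hand Condition 5 is immediate: for every $x\in M$,
\begin{equation}
\int_M |\Omega(x,y)|\,d\mu(y)=\int_M \sqrt{\rho_x(q_y)}\,d\mu(y),
\end{equation}
so taking the supremum over $x$ reproduces exactly the left-hand side of \cref{extrac}, which is finite by hypothesis. Hence $\sup_{x\in M}\int_M|\Omega(x,y)|\,d\mu(y)<\infty$, which is Condition 5.

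There is essentially no obstacle here; the only bookkeeping point is the positivity of $\rho_x(q_y)$, needed so that the square root is real and matches the pointwise norm. The genuine analytic content was front-loaded into the assumption \cref{extrac}, which was built into the statement of \cref{equivalence} precisely so that the technical boundedness Condition 5 on the propagator side becomes automatic in the $2\to 1$ direction.
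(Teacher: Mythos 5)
Your proposal is correct and follows essentially the same route as the paper, whose entire proof is the one-line observation that $|\Omega(x,y)|=\sqrt{\rho_x(q_y)}$ combined with the standing hypothesis \cref{extrac}. You merely spell out the verification of that pointwise identity (via the 3-point formula at $z=y$, idempotence of $q_y$, and the Hermitian property), which the paper leaves implicit.
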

\begin{proof}
This follows immediately from $|\Omega(x,y)|=\sqrt{\rho_x(q_y)}.$
\end{proof}
\begin{proposition}\label{rep}
$\int_M \Omega(x,y)\Omega(z,y)\,d\mu(z)=\Omega(x,y).$  
\end{proposition}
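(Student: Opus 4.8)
The remaining propagator axiom to verify is the reproducing/composition law (condition~\ref{conditionf}),
\[
\int_M \Omega(x,z)\,\Omega(z,y)\,d\mu(z)=\Omega(x,y),
\]
and my plan is to reduce it directly to the overcompleteness axiom. First I would fix normalized vectors $v_x\in\mathcal{L}_x$ and $v_y\in\mathcal{L}_y$, regarded as unit vectors in $\mathcal{H}$ spanning the lines $\pi(q_x),\pi(q_y)$; these trivialize $\textup{Hom}(\mathcal{L}_x,\mathcal{L}_y)\cong\mathbb{C}$, so it suffices to match the scalars representing the two sides. Unwinding the defining relation $v\,\Omega(a,b)=\pi(q_b)v$ twice gives
\[
v_x\,\Omega(x,z)\Omega(z,y)=\pi(q_y)\pi(q_z)\,v_x,
\]
since $v_x\Omega(x,z)=\pi(q_z)v_x$ already lies in $\mathcal{L}_z$, and applying $\Omega(z,y)$ to a vector of $\mathcal{L}_z$ means applying $\pi(q_y)$.

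Pairing against $v_y$ and using $\pi(q_y)v_y=v_y$, the scalar representing $\Omega(x,z)\Omega(z,y)$ in this trivialization is $\langle v_y,\pi(q_z)v_x\rangle$, while the scalar representing $\Omega(x,y)$ is $\langle v_y,\pi(q_y)v_x\rangle=\langle v_y,v_x\rangle$. Thus the claim becomes the scalar identity
\[
\int_M \langle v_y,\pi(q_z)v_x\rangle\,d\mu(z)=\langle v_y,v_x\rangle.
\]
The functional $s:A\mapsto\langle v_y,Av_x\rangle$ is the pairing with the trace-class operator $\lvert v_x\rangle\langle v_y\rvert$, hence $s\in M_{\hbar*}$. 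The weak form of the overcompleteness axiom (as restated just after \cref{maind}) then says precisely that $z\mapsto s(q_z)$ lies in $L^1(M,d\mu)$ and that $\int_M s(q_z)\,d\mu(z)=s(\mathds 1)=\langle v_y,v_x\rangle$, which is exactly the required identity. Since the trivialization was arbitrary, this is the claimed equality of $\textup{Hom}(\mathcal{L}_x,\mathcal{L}_y)$-valued expressions.

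The only point needing care — and the main (mild) obstacle — is the weak-sense bookkeeping: one must confirm that $s=\langle v_y,(\cdot)\,v_x\rangle$ genuinely belongs to the predual $M_{\hbar*}$ so that the overcompleteness axiom is applicable, and that the scalar integrand is absolutely integrable. Both are supplied by the overcompleteness axiom itself (its $L^1$ clause), equivalently by $|\Omega(x,z)\Omega(z,y)|\le|\Omega(x,z)|$ together with condition~\ref{bounded}, which was already established. Because reducing to scalars replaces the vector-valued integral by an ordinary complex one, no separate justification for interchanging the integral with the bounded operator $\pi(q_y)$ is required, and the argument closes once predual membership is recorded.
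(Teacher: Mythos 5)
Your proposal is correct and follows essentially the same route as the paper: both rest on the identity $v_x\Omega(x,z)\Omega(z,y)=\pi(q_y)\pi(q_z)v_x$ obtained by unwinding the definition of $\Omega$ twice, and then invoke the overcompleteness axiom $\mathds{1}=\int_M q_z\,d\mu(z)$. The only difference is that you spell out the weak-sense bookkeeping (pairing against the trace-class functional $A\mapsto\langle v_y,Av_x\rangle$ to reduce to a scalar identity), which the paper leaves implicit.
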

\begin{proof}
Since $v_x\Omega(x,z)\Omega(z,y)=\pi(q_y)\pi(q_z)v_x,$ this follows from
\begin{equation}
    \mathds{1}=\int_M q_z\,d\mu(z)\;.
\end{equation}
\end{proof}
\begin{proposition}
The map 
\begin{equation}
    \mathcal{H}\mapsto \mathcal{H}_{\textup{phy}}\;,\;\;v\mapsto \Psi\;,\; \Psi(x)=q_x v
\end{equation}
 is a unitary equivalence, with the inverse given weakly by 
 \begin{equation}\label{inte}
    \mathcal{H}_{\textup{phy}}\to\mathcal{H}\;,\;\;\Psi\mapsto \int_M \Psi(x)\,d\mu(x)\;.
\end{equation}
\end{proposition}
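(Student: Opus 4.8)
The plan is to prove the two maps are mutually inverse isometries, with the single workhorse being the overcompleteness axiom $\mathds{1}=\int_M q_x\,d\mu(x)$ (read weakly) together with the projection identities $q_x^2=q_x=q_x^*$. Write $\Psi_v(x):=q_x v$ for the forward map and $R\Psi:=\int_M\Psi(x)\,d\mu(x)$ (weakly) for the proposed inverse. There are four things to check: that $\Psi_v\in\mathcal{H}_{\textup{phy}}$, that $v\mapsto\Psi_v$ is an isometry, that $R$ is well defined, and that $R$ inverts the forward map on both sides; a surjective isometry is then unitary.

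First I would show $\Psi_v$ is a fixed point of $P_\Omega$, hence lies in $\mathcal{H}_{\textup{phy}}$. By the definition of $\Omega$, any $w_z\in\mathcal{L}_z$ satisfies $w_z\Omega(z,x)=q_x w_z$; applied to $\Psi_v(y)=q_y v$ this gives $\Psi_v(y)\Omega(y,x)=q_x q_y v$, so
\begin{equation}
P_\Omega(\Psi_v)(x)=\int_M q_x q_y v\,d\mu(y)=q_x\Big(\int_M q_y\,d\mu(y)\Big)v=q_x v=\Psi_v(x),
\end{equation}
where the middle step is overcompleteness applied to the trace-class functional $A\mapsto\langle w,q_x A v\rangle$ for arbitrary $w\in\mathcal{H}$, after which varying $w$ yields the vector identity. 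For the isometry, using $q_x^*q_x=q_x$ and overcompleteness once more with the functional $A\mapsto\langle v,Aw\rangle$,
\begin{equation}
\langle\Psi_v,\Psi_w\rangle_{L^2}=\int_M\langle q_x v,q_x w\rangle\,d\mu(x)=\int_M\langle v,q_x w\rangle\,d\mu(x)=\langle v,w\rangle,
\end{equation}
so $\|\Psi_v\|_2=\|v\|$ and in particular $v\mapsto\Psi_v$ is a continuous isometry into $\mathcal{H}_{\textup{phy}}$ (continuity of the section following from continuity of $x\mapsto q_x$).

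Next I would check $R$ is well defined and inverts the forward map. For $\Psi\in\mathcal{H}_{\textup{phy}}$ and $w\in\mathcal{H}$, the identity $\Psi(x)=q_x\Psi(x)$ (since $\Psi(x)\in\mathcal{L}_x$) and Cauchy--Schwarz give $|\langle w,\Psi(x)\rangle|=|\langle q_x w,\Psi(x)\rangle|\le|q_x w|\,|\Psi(x)|$; combining the isometry identity $\int_M|q_x w|^2\,d\mu=\|w\|^2$ with $\Psi\in L^2(M,\mathcal{L})$ shows $x\mapsto\langle w,\Psi(x)\rangle$ is in $L^1$ with $|\int_M\langle w,\Psi(x)\rangle\,d\mu(x)|\le\|w\|\,\|\Psi\|_2$, so $R\Psi\in\mathcal{H}$ is defined by Riesz representation. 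Then $\langle w,R\Psi_v\rangle=\int_M\langle w,q_x v\rangle\,d\mu(x)=\langle w,v\rangle$ gives $R\Psi_v=v$, while pulling the bounded operator $q_x$ through the weak integral and using $q_x\Psi(y)=\Psi(y)\Omega(y,x)$ gives
\begin{equation}
q_x(R\Psi)=\int_M q_x\Psi(y)\,d\mu(y)=\int_M\Psi(y)\Omega(y,x)\,d\mu(y)=P_\Omega(\Psi)(x)=\Psi(x),
\end{equation}
the last equality because $\Psi\in\mathcal{H}_{\textup{phy}}$; thus $\Psi_{R\Psi}=\Psi$. Hence the maps are mutually inverse and the forward map is a surjective isometry, i.e. unitary.

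I expect the genuine work to be bookkeeping rather than any deep step: the main obstacle is justifying each interchange of a weak-sense integral with $q_x$ and with an inner product. Every such interchange reduces to showing the relevant scalar integrand lies in $L^1(M,d\mu)$ — supplied exactly as above via Cauchy--Schwarz, the normalization $\|q_x\|=1$, condition \ref{bounded}, and the identity $\int_M|q_x w|^2\,d\mu=\|w\|^2$ — after which the ultraweak continuity of the functionals paired against the overcompleteness axiom, and Fubini, legitimize the exchanges and the displayed computations go through verbatim.
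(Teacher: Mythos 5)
Your proof is correct and uses exactly the same ingredients as the paper's (one-line) argument — the resolution of the identity and the reproducing property of $\Omega$ — merely carried out in full detail: the weak-integral interchanges you flag are justified as you say, and the four checks (range in $\mathcal{H}_{\textup{phy}}$, isometry, well-definedness of the weak integral, mutual inversion) are precisely what the paper leaves implicit.
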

\begin{proof}
This follows from \cref{rep} and the resolution of the identity.
\end{proof}
Note that, \cref{inte} is defined on the entire Hilbert space of sections, but as a result of overcompleteness it isn't injective on the entire space. Sometimes one can use Lagrangian polarizations to determine an orthogonal basis of the physical Hilbert space.
\subsubsection{Pullback of Hermitian Form}
One can use $q$ to pullback the Hermitian form to a complex–valued form on $M.$ In most cases $q$ is a smooth embedding, therefore $M$ inherits a Riemannian metric. If in addition $q(M)\subset \textup{P}(\mathcal{H})$ is an almost complex submanifold then the pullback of the Hermitian form turns $M$ into a K\"{a}hler manifold. We can compute the pullback from $\Delta$:
\begin{definition}
Let $\alpha:G^{(n)}\to\mathbb{C}$ be smooth, normalized and invariant under even permutations with respect to $S_{n+1}\acts G^{(n)}.$ Define
\begin{equation}
\mathcal{J}_0(\alpha):\mathfrak{g}\otimes\cdots\otimes\mathfrak{g}\to\mathbb{C}\;,\;\;\mathcal{J}_0(\alpha)(X_1,\ldots,X_n)=X_1\cdots X_n\alpha(m,\cdot,\ldots,\cdot)\;,
\end{equation}
where for $1\le k\le n,$ $X_k\in \mathfrak{g}_m$ differentiates in the $(k+1)$th component of $\alpha.$
\end{definition}
\begin{proposition}
The pullback of the Hermitian form is given by
 \begin{equation}
   \mathcal{J}_0(\log{\Delta})\;.
     \end{equation}
\end{proposition}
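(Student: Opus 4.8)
The plan is to compute both sides in a local smooth unit-norm frame and observe that they coincide termwise. Since $q$ is assumed smooth and $\pi\circ q:M\to\mathrm P(\mathcal H)$ lands in projective space, I would first fix, on a neighborhood $U$ of a point $m$, a smooth map $v:U\to\mathcal H$ with $\|v_x\|=1$ and $\pi(q_x)=[v_x]$; such a lift exists because $\mathcal L\to M$ is the pullback of the tautological bundle and is smooth. With this lift $q_x$ is the orthogonal projection onto $\mathbb C v_x$, so the defining relation $q_xAq_x=\rho_x(A)q_x$ applied to $A=q_yq_z$ gives the closed form
\begin{equation}
\Delta(x,y,z)=\rho_x(q_yq_z)=\langle v_x,v_y\rangle\langle v_y,v_z\rangle\langle v_z,v_x\rangle .
\end{equation}
On the other side, the Fubini--Study Hermitian form on $\mathrm P(\mathcal H)$ pulled back by $q$ is, at $m$ and for $X,Y\in T_mM$,
\begin{equation}
(q^*h)(X,Y)=\langle v_X,v_Y\rangle-\langle v_X,v\rangle\langle v,v_Y\rangle ,
\end{equation}
where $v:=v_m$ and $v_X:=dv(X)$. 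I would record that this right-hand side equals $\langle(\mathds 1-q_m)v_X,(\mathds 1-q_m)v_Y\rangle$ and hence is independent of the unit lift, since replacing $v_x$ by $e^{i\theta(x)}v_x$ only shifts $v_X$ by a multiple of $v$, which the projection $\mathds 1-q_m$ annihilates.

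Next I would specialize $\mathcal J_0$ to the pair groupoid, where $G^{(2)}=\mathrm{Pair}^{(2)}M=M^3$, $\mathfrak g_m=T_mM$, and ``differentiation in the $(k+1)$-th component'' is ordinary directional differentiation of $\log\Delta(m,\cdot,\cdot)$, evaluated at the identity $y=z=m$. Writing $\log\Delta(m,y,z)=\log\langle v_m,v_y\rangle+\log\langle v_y,v_z\rangle+\log\langle v_z,v_m\rangle$ (using the chosen lift and the branch fixed by $\log\Delta|_M=0$), the first summand is independent of $z$ and the third is independent of $y$, so both are killed by the mixed derivative appearing in $\mathcal J_0$. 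Only the middle term survives, and the identity $\partial_X\partial_Y\log g=\tfrac{\partial_X\partial_Y g}{g}-\tfrac{(\partial_X g)(\partial_Y g)}{g^2}$ applied to $g(y,z)=\langle v_y,v_z\rangle$, together with $g(m,m)=1$, $\partial_X^{(y)}g|_m=\langle v_X,v\rangle$, $\partial_Y^{(z)}g|_m=\langle v,v_Y\rangle$ and $\partial_X^{(y)}\partial_Y^{(z)}g|_m=\langle v_X,v_Y\rangle$, yields
\begin{equation}
\mathcal J_0(\log\Delta)(X,Y)=\langle v_X,v_Y\rangle-\langle v_X,v\rangle\langle v,v_Y\rangle=(q^*h)(X,Y).
\end{equation}

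Finally I would confirm that this local computation is coherent: the frame-dependent phase $-i\theta(y)+i\theta(z)$ produced by $v_x\mapsto e^{i\theta(x)}v_x$ is itself annihilated by the mixed derivative, matching the lift-independence already noted for $q^*h$, so the equality is global and frame-free. The main obstacle I anticipate is not the core computation but the bookkeeping around conventions: pinning down the exact normalization of the Fubini--Study Hermitian form (which slot is conjugate-linear, and any sign or factor of $2$ relating $h$ to the induced metric and symplectic form) so that the two formulas agree on the nose rather than up to a constant, and verifying the even-permutation-invariance hypothesis of $\mathcal J_0$ --- which holds because $\Delta$ is cyclic-invariant, so that $\mathcal J_0(\log\Delta)$ is well defined.
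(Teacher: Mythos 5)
Your computation is correct, and it is the natural one: the paper states this proposition without proof, so there is no argument to compare against, but your reduction of the mixed derivative of $\log\Delta(m,\cdot,\cdot)$ to the single cross term $\log\langle v_y,v_z\rangle$, yielding $\mathcal{J}_0(\log\Delta)(X,Y)=\langle v_X,v_Y\rangle-\langle v_X,v\rangle\langle v,v_Y\rangle=\langle(\mathds{1}-q_m)v_X,(\mathds{1}-q_m)v_Y\rangle$, together with the observation that cyclic invariance of $\Delta=\rho_x(q_yq_z)$ makes $\mathcal{J}_0$ applicable and that the frame phase is killed by the mixed derivative, is exactly the verification the statement requires. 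The normalization issue you flag resolves itself if the Hermitian form on $\mathrm{P}(\mathcal{H})$ is taken to be the restriction of the ambient inner product of $\mathcal{H}$ to the horizontal distribution of $\mathcal{H}\setminus\{0\}\to\mathrm{P}(\mathcal{H})$, which is the convention consistent with the paper's surrounding claims relating $\mathcal{J}_0(\log\Delta)$ to $\textup{VE}(\log\Delta)$ and $F(\nabla_\Omega)$.
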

\begin{remark}$\mathcal{J}_0(\alpha)\vert_{m}$ is naturally identified with the $n$-jet of $\alpha(m,\cdot,\ldots,\cdot).$ In addition, its antisymmetrization is equal to $\textup{VE}(\alpha)/2.$
\end{remark}
\section{Proof of Equivalence of Propagators and Path Integrals}\label{theproof}
In order to formally prove the desired equivalence of \cref{patho} with \cref{path1}, we state the following lemma, which uses a recent definition of the Riemann integral on manifolds (\cref{intee}):
\begin{lemma}\label{int}
Let $f:[0,1]\to\mathbb{R}$ be $C^1$ and let $F:[0,1]\times[0,1]\to\mathbb{R}$ be $C^1$ and such that it vanishes on the diagonal. Then
\begin{equation}\label{rss}
    \sum_{i=0}^{n-1}F(x_i,x_{i+1})\xrightarrow[]{\Delta x_i\to 0}\int_a^b f\,dx
\end{equation}
for all $0\le a\le b\le 1,$ where $a=x_0<\cdots<x_n=b,$ if and only if for all $x\in [0,1],$
\begin{equation}\label{condi}
    \partial_y F(x,y)\vert_{y=x}=f(x)\;.
\end{equation}
\end{lemma}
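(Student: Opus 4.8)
The plan is to show that the generalized Riemann sum in \cref{rss} always converges, independently of $f$, to the integral of the \emph{diagonal derivative} $g(x):=\partial_y F(x,y)\vert_{y=x}$, and then to observe that the lemma reduces to the elementary fact that $\int_a^b g=\int_a^b f$ for all $a\le b$ if and only if $g=f$. Note that $g$ is continuous since $F$ is $C^1$.

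First I would establish the key claim: for every $0\le a\le b\le 1$,
\[
\sum_{i=0}^{n-1}F(x_i,x_{i+1})\xrightarrow[\Delta x_i\to 0]{}\int_a^b g\,dx.
\]
Since $F$ vanishes on the diagonal, the mean value theorem applied to $y\mapsto F(x_i,y)$ on $[x_i,x_{i+1}]$ produces, for each $i$, a point $\xi_i\in(x_i,x_{i+1})$ with
\[
F(x_i,x_{i+1})=F(x_i,x_i)+\partial_y F(x_i,\xi_i)\,\Delta x_i=\partial_y F(x_i,\xi_i)\,\Delta x_i.
\]
Hence $\sum_i F(x_i,x_{i+1})=\sum_i \partial_y F(x_i,\xi_i)\,\Delta x_i$, which differs from the ordinary left-endpoint Riemann sum $\sum_i g(x_i)\,\Delta x_i$ by $\sum_i \big(\partial_y F(x_i,\xi_i)-\partial_y F(x_i,x_i)\big)\,\Delta x_i$. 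Because $\partial_y F$ is continuous on the compact square $[0,1]^2$ it is uniformly continuous there, so for any $\varepsilon>0$ this difference is bounded by $\varepsilon(b-a)$ once the mesh is small enough. As $g$ is continuous, its ordinary Riemann sums converge to $\int_a^b g\,dx$, and the claim follows.

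With the claim in hand both implications are immediate. For the direction $(\Leftarrow)$, if \cref{condi} holds then $g=f$ and the sums converge to $\int_a^b f\,dx$ by the claim. For $(\Rightarrow)$, if the sums converge to $\int_a^b f\,dx$ for all $a\le b$, then by uniqueness of limits $\int_a^b g\,dx=\int_a^b f\,dx$ for all $a\le b$; differentiating in $b$ and using continuity of both integrands (the fundamental theorem of calculus) gives $g(b)=f(b)$ for every $b$, which is exactly \cref{condi}.

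The only genuinely delicate point I anticipate is making the comparison between $\sum_i \partial_y F(x_i,\xi_i)\,\Delta x_i$ and $\sum_i g(x_i)\,\Delta x_i$ uniform over all partitions of mesh below a single threshold; this is precisely where the $C^1$ hypothesis enters, via uniform continuity of $\partial_y F$ on the compact square, and it is what allows the per-subinterval error to survive simultaneous refinement of every subinterval. Everything else is the standard theory of Riemann sums for a continuous integrand together with the fundamental theorem of calculus, so I do not expect further obstacles.
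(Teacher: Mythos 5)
Your proof is correct. The paper itself does not supply an argument for this lemma: it is stated as the one--dimensional instance of the generalized Riemann sum machinery in the appendix (\cref{intee}, \cref{intev}), which is in turn imported from \cite{Lackman2} and covers only the ``if'' direction (convergence of the sums to $\int\omega$ when the derivative condition holds). Your route is more elementary and more complete. The key step --- applying the mean value theorem to $y\mapsto F(x_i,y)$ and using that $F$ vanishes on the diagonal to write $F(x_i,x_{i+1})=\partial_yF(x_i,\xi_i)\,\Delta x_i$, then comparing with the left--endpoint Riemann sum of $g(x)=\partial_yF(x,x)$ via uniform continuity of $\partial_yF$ on the compact square --- is exactly the right mechanism, and it establishes the stronger fact that the sums \emph{always} converge to $\int_a^b g$, from which both implications fall out (the converse by the fundamental theorem of calculus applied to the continuous integrands $f$ and $g$). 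The one point worth making explicit if you write this up is the quantifier structure of the limit: the bound $\bigl|\sum_i(\partial_yF(x_i,\xi_i)-\partial_yF(x_i,x_i))\Delta x_i\bigr|\le\varepsilon(b-a)$ must hold for \emph{every} partition of mesh below a single threshold $\delta(\varepsilon)$, not merely along a sequence of refinements; you correctly flag that uniform continuity of $\partial_yF$ on $[0,1]^2$ is what delivers this. Your argument also makes transparent where the $C^1$ hypothesis on $F$ is genuinely used (uniform continuity of $\partial_yF$ and continuity of $g$), whereas the $C^1$ hypothesis on $f$ is not needed --- continuity of $f$ suffices for the fundamental theorem of calculus step.
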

\begin{theorem}(formal)
$\Omega$ is equal to \cref{path1} if and only if it satisfies \cref{patho}.
\end{theorem}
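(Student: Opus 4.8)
The plan is to make precise the sense in which \cref{path1} is defined --- as a formal inverse limit of an inverse system of $\mathcal{L}$-valued (complex) measures indexed by finite partitions of $[0,1]$ --- and then to match the consistency data and the integrand of this system term by term against the conditions of \cref{patho}. Concretely, to a partition $0=t_0<t_1<\cdots<t_n=1$ I associate the measure on the interior positions $(z_1,\dots,z_{n-1})\in M^{n-1}$ (with $z_0=x,\ z_n=y$) whose density against $d\mu(z_1)\cdots d\mu(z_{n-1})$ is the discretized parallel-transport weight $\Omega(z_0,z_1)\Omega(z_1,z_2)\cdots\Omega(z_{n-1},z_n)$, the juxtaposition being the fiberwise contraction valued in $\textup{Hom}(\mathcal{L}_x,\mathcal{L}_y)$. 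The formal path integral \cref{path1} is the inverse limit of these measures as the partition is refined, and its total mass is the value $\Omega(x,y)$.

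For the consistency of the inverse system --- the statement that refining a partition by inserting a new time and then marginalizing out the new position returns the coarser measure --- I would observe that inserting a position $z$ between $z_{i-1}$ and $z_i$ replaces the factor $\Omega(z_{i-1},z_i)$ by $\int_M \Omega(z_{i-1},z)\,\Omega(z,z_i)\,d\mu(z)$, and this equals $\Omega(z_{i-1},z_i)$ precisely by condition \ref{conditionf}. Thus \ref{conditionf} is equivalent to the inverse system being well defined and, applied repeatedly, to the total mass being partition-independent and equal to $\Omega(x,y)$. The remaining structural conditions I would match directly to properties of genuine parallel transport: condition \ref{nor} (that $\Omega(x,x)=\mathds{1}$) encodes that the constant path contributes the identity; condition \ref{orth} (that $\Omega(x,y)=\Omega^*(y,x)$) is the invariance of $\mathcal{D}\gamma$ under orientation reversal together with the unitarity of parallel transport of a Hermitian connection, $P(\gamma^{-1})=P(\gamma)^*$; and conditions \ref{norm2} and \ref{bounded} are the regularity inputs guaranteeing that the formal limit defines a bounded operator, namely the orthogonal projection $P_\Omega$ already constructed.

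The heart of the argument is identifying the discretized integrand with honest parallel transport, and this is where \cref{int} enters. Pulling $\Omega$ back along a fixed smooth path $\gamma$ and writing $\Omega(\gamma(s),\gamma(t))=\exp G(\gamma(s),\gamma(t))$ in a local trivialization, condition \ref{nor} forces $G$ to vanish on the diagonal, so $F(s,t):=G(\gamma(s),\gamma(t))$ satisfies the hypotheses of \cref{int}. The lemma then shows that the discretized product $\prod_i \Omega(\gamma(t_i),\gamma(t_{i+1}))$ converges to $\exp\!\big(\int_0^1 f(t)\,dt\big)$ with $f(t)=\partial_s G(\gamma(t),\gamma(s))\big|_{s=t}$, and by the chain rule $f(t)=\big(\partial_y\log\Omega(x,y)\big|_{y=x}\big)(\dot\gamma(t))$ evaluated at $x=\gamma(t)$. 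Since the connection form of $\nabla_\Omega$ is exactly $\partial_y\log\Omega(x,y)\big|_{y=x}$, this limit equals the parallel transport $P(\gamma)$ of $\nabla_\Omega$; hence the integrand of the inverse limit is the prescribed $P(\gamma)$ of $\nabla$ if and only if $\nabla_\Omega=\nabla$, which recovers exactly the ``integrating $\nabla$'' clause of \cref{patho}.

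I expect the main obstacle to be the formal bookkeeping rather than any single analytic estimate: because the approximating measures are complex-valued rather than probability measures, the Kolmogorov extension theorem is unavailable and one must argue purely at the level of the formal inverse limit, taking care that the ``only if'' direction genuinely forces each condition of \cref{patho} separately and not merely their conjunction. A secondary technical point is that \cref{int} is stated for real $C^1$ functions on $[0,1]$, so its application to the $\mathbb{C}$-valued $F$ requires splitting into real and imaginary parts and verifying the diagonal-derivative hypothesis \ref{condi} componentwise; this is routine but is what legitimately licenses passing from Riemann sums of $\log\Omega$ to the line integral of the connection form.
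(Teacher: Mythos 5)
Your proposal follows essentially the same route as the paper: iterate condition \ref{conditionf} to write $\Omega(x,y)$ as an $(n-1)$-fold integral of the product $\prod_k\Omega(x_k,x_{k+1})$, invoke \cref{int} (applied to $\log\Omega$ pulled back along a $C^1$ path) to identify the limit of the discretized product with parallel transport for $\nabla_\Omega$, and conclude that one recovers \cref{path1} exactly when $\nabla_\Omega=\nabla$; the inverse-system bookkeeping you describe is precisely what the paper carries out afterwards in \cref{invm}. Your added care about the diagonal-derivative hypothesis of \cref{int} and the componentwise real/imaginary application is a correct and welcome elaboration of a step the paper leaves implicit, but it does not change the argument.
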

\begin{proof}
Iterating (\ref{conditionf}) with $x_0=x,\,x_n=y,$ we have
\begin{equation}\label{prop}
\Omega(x,y)=\int_{M^{n-1}} \prod_{k=0}^{n-1} \Omega(x_k,x_{k+1})\,d\mu(x_1)\,\cdots\,d\mu(x_{n-1})\;.
  \end{equation}
Let $\gamma:[0,1]\to M$ be a $C^1$ path and let $0=t_0<\cdots<t_n=1.$ Then by \cref{int},
\begin{equation}\label{conv}
    \prod_{k=0}^{n-1} \Omega(\gamma_{t_k},\gamma_{t_{k+1}})\xrightarrow[]{\Delta t_i\to 0} P_{\Omega}(\gamma)\;,
\end{equation}
where $P_{\Omega}$ denotes parallel transport with respect to $\nabla_{\Omega}.$ Therefore, taking $n\to\infty$ in (\ref{prop}) gives
\begin{equation}
\Omega(x,y)=\int_{\gamma_0=x}^{\gamma_1=y}P_{\Omega}(\gamma)\,\mathcal{D}\gamma\;.
\end{equation}
We obtain \cref{path1} if and only if $\nabla_{\Omega}=\nabla.$    
\end{proof}
\subsection{The Measure $\mathbf{P(\gamma)\,\mathcal{D}\gamma}$}
$P(\gamma)\,\mathcal{D}\gamma$ can't be defined as a complex measure, but it can be defined as a formal inverse limit of an inverse system of complex measures. The reason the limit is formal is because the category of complex measure spaces doesn't have enough inverse limits.
\begin{definition}
A morphism between measurable spaces is a measurable function between them. A morphism between complex measure spaces is a measurable map $f:(M_1,\mathcal{F}_1,\mu_1)\to (M_2,\mathcal{F}_2,\mu_2)$ such that $f_*\mu_1=\mu_2.$ 
\end{definition}
\begin{definition}\label{invl}
Let $\{((M_i,\mathcal{F}_i,\mu_i),f_{ij})\}_{i,j\in J}$ be an inverse system in the category of complex measure spaces for which
\begin{equation}
   (M,\mathcal{F}):= \varprojlim \{((M_i,\mathcal{F}_i),f_{ij})\}_{i,j\in J}
\end{equation}
exists, and let 
\begin{equation}
    \pi_i:(M,\mathcal{F})\to (M_i,\mathcal{F}_i)
\end{equation}
be the projection map. We say that $g\in L^1(M,\mathcal{F},\mu)$ if there exists $i\in J$ and  $g_i\in L^1(M_i,\mathcal{F}_i,\mu_i)$ such that $g=g_i\circ\pi_i ,$ and we define 
\begin{equation}
    \int_M g\,d\mu:=\int_{M_i} g_i\circ\pi_i\,d\mu_i\;.
\end{equation}
\end{definition}
Note that, $\mu$ is just notation. It is formally the inverse limit of the measures $\mu_i,$ which may not actually exist. This definition is consistent because if $f:(M_1,\mathcal{F}_1,\mu_1)\to (M_2,\mathcal{F}_2,\mu_2)$ is a morphism and if $g\in L^1(M,\mathcal{F}_2,\mu_2),$ then
\begin{equation}
    \int_{M_1} g\circ f \,d\mu_1=\int_{M_2} g\,d\mu_2\;.
\end{equation}
\begin{definition}
Let $M$ be a manifold and let $\Delta_{[0,1]}$ denote the directed set of partitions of $[0,1],$ partially ordered by refinement. Let $x,y\in M$ and let $\mathcal{B}$ denote the Borel $\sigma$–algebra of 
\begin{equation}\label{borel}
   X=\{\gamma:[0,1]\to M:\gamma(0)=x,\,\gamma(1)=y\}\;.
\end{equation}
For $\Delta\in \Delta_{[0,1]},$ let $\mathcal{B}_{\Delta}$ be the finest sub–$\sigma$–algebra of $\mathcal{B}$ that doesn't separate paths that agree on $\Delta.$ That is, if $A\in\mathcal{B}$ then $A\in\mathcal{B}_{\Delta}$ if and only if: $\gamma_1\in A$ and $\gamma_1\vert_{\Delta}=\gamma_2\vert_{\Delta}$ implies that $\gamma_2\in A.$ 
\end{definition}
Note that, if $\Delta_1\le \Delta_2,$ then $\mathds{1}:(X,\Delta_2)\to (X,\Delta_1)$ is measurable. In the context of Poisson manifolds, it is more natural to take $\mathcal{B}_{\Delta}$ to be the finest sub–$\sigma$–algebra that doesn't separate paths that are homotopic relative to the points of $\Delta.$
\begin{proposition}\label{invm}
Let $\Omega$ be a propagator on $(M,d\mu)$ and let $\Delta\in\Delta_{[0,1]}$ be a partition with $(n+1)$ points. Consider the measurable space $(X,\mathcal{B}_{\Delta}).$ Let 
\begin{equation}
    \Omega_{\Delta}:M^{n-1}\to\mathbb{C}\;,\;\;\Omega_{\Delta}(x_1,\cdots, x_{n-1})=\Omega(x,x_1)\Omega(x_1,x_2)\cdots\Omega(x_{n-1},y)\;.
\end{equation}
Then $\{((X,\mathcal{B}_{\Delta},\Omega_{\Delta}\,d\mu),\mathds{1})\}_{\Delta\in\Delta_{[0,1]}}$ is an inverse system of  $\textup{Hom}(\mathcal{L}_x,\mathcal{L}_y)$–valued measure spaces\footnote{Because $\textup{Hom}(\mathcal{L}_x,\mathcal{L}_y)\cong \mathbb{C},$ only traditional measure theory is needed for this.} for which
\begin{equation}\label{inv}
  (X,\mathcal{B})=  \varprojlim(X,\mathcal{B}_{\Delta})\;.
\end{equation}
\end{proposition}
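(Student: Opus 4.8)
The plan is to establish the two claims in turn: that the indicated data form an inverse system in the category of complex measure spaces, and that the underlying measurable space of its limit is $(X,\mathcal{B})$. Throughout I identify, for a partition $\Delta=\{0=t_0<\cdots<t_n=1\}$, the node $(X,\mathcal{B}_\Delta)$ with $M^{n-1}$ through the interior evaluation map $\mathrm{ev}_\Delta(\gamma)=(\gamma(t_1),\dots,\gamma(t_{n-1}))$, so that $\mathcal{B}_\Delta=\mathrm{ev}_\Delta^{-1}(\mathrm{Borel}(M^{n-1}))$ and $\Omega_\Delta\,d\mu$ transports to a measure on $(X,\mathcal{B}_\Delta)$. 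Since $\mathcal{L}_x,\mathcal{L}_y$ are one-dimensional, $\mathrm{Hom}(\mathcal{L}_x,\mathcal{L}_y)\cong\mathbb{C}$ and only scalar measure theory is needed. First I would check that each node is a genuine finite complex measure space by bounding the total variation: integrating the interior variables one at a time, using $|\Omega|\le 1$ (conditions \ref{nor}, \ref{norm2}) and the uniform bound $C:=\sup_{x}\int_M|\Omega(x,\cdot)|\,d\mu<\infty$ (condition \ref{bounded}, together with $|\Omega(x,y)|=|\Omega(y,x)|$ from condition \ref{orth}), gives $\int_{M^{n-1}}|\Omega_\Delta|\,d\mu^{n-1}\le C^{\,n-1}<\infty$.

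The substantive part of the inverse-system claim is that for $\Delta_1\le\Delta_2$ the measurable identity $\mathds{1}\colon(X,\mathcal{B}_{\Delta_2})\to(X,\mathcal{B}_{\Delta_1})$ is a morphism, i.e.\ it pushes $\Omega_{\Delta_2}\,d\mu$ forward to $\Omega_{\Delta_1}\,d\mu$ in the sense of the morphism definition preceding \cref{invl}. I would reduce to the case in which $\Delta_2$ is obtained from $\Delta_1$ by inserting a single time $s$ with $t_k<s<t_{k+1}$. A cylinder set in $\mathcal{B}_{\Delta_1}$ places no constraint on the value $\gamma(s)$, so evaluating its $\Omega_{\Delta_2}$-measure requires the unconstrained integral $\int_M\Omega(x_k,x_s)\,\Omega(x_s,x_{k+1})\,d\mu(x_s)=\Omega(x_k,x_{k+1})$, which collapses to the corresponding factor of $\Omega_{\Delta_1}$ by the semigroup property (condition \ref{conditionf}); the $L^1$ bound of the previous paragraph lets Fubini carry out this single integration inside all the others. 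Inserting the finitely many extra points of $\Delta_2$ one at a time and inducting gives the general case. The remaining inverse-system axioms are formal: $\Delta_{[0,1]}$ is directed since any two partitions have a common refinement (their union), and the transition maps, all equal to $\mathds{1}_X$, compose correctly.

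For the limit, observe that every underlying set is $X$, every transition map is $\mathds{1}_X$, and $\Delta_1\le\Delta_2$ forces $\mathcal{B}_{\Delta_1}\subseteq\mathcal{B}_{\Delta_2}$; hence the categorical inverse limit of the measurable spaces is $X$ equipped with the initial $\sigma$-algebra for the projections, namely $\sigma\big(\bigcup_\Delta\mathcal{B}_\Delta\big)$. It then remains to prove $\mathcal{B}=\sigma\big(\bigcup_\Delta\mathcal{B}_\Delta\big)$. The inclusion $\supseteq$ is immediate from $\mathcal{B}_\Delta\subseteq\mathcal{B}$. For $\subseteq$, note that $\bigcup_\Delta\mathcal{B}_\Delta$ is the algebra of cylinder sets and that every $t\in(0,1)$ is an interior point of some partition (e.g.\ $\{0,t,1\}$), so $\sigma\big(\bigcup_\Delta\mathcal{B}_\Delta\big)$ is the $\sigma$-algebra generated by all evaluations $\mathrm{ev}_t$, $t\in[0,1]$. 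Its coincidence with $\mathcal{B}$ is the standard fact that, on the space of continuous paths into a second-countable manifold $M$, the Borel $\sigma$-algebra of the compact-open (uniform) topology equals the cylinder $\sigma$-algebra, separability being what allows uniform proximity to be detected on a countable dense set of times.

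I expect the main obstacle to be this last identification $\mathcal{B}=\sigma\big(\bigcup_\Delta\mathcal{B}_\Delta\big)$: it is the only step that is neither a formal categorical manipulation nor a direct application of the propagator axioms, and it requires fixing the topology on $X$ and invoking the coincidence of Borel and cylinder $\sigma$-algebras on path space, where second countability of $M$ is essential. By contrast the morphism step, though it is the conceptual heart of the construction, is technically just Chapman--Kolmogorov (condition \ref{conditionf}) combined with Fubini.
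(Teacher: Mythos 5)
Your proposal is correct and its skeleton matches the paper's (very terse) proof: the inverse‑system property is exactly condition 4 (Chapman–Kolmogorov) applied one inserted time at a time under Fubini, and the identification of the limit rests on $\mathcal{B}$ being generated by cylinder sets. The extra work you do — the total‑variation bound $C^{\,n-1}$ justifying Fubini, the reduction to a single inserted point, and the observation that a limit over identity transition maps is the supremum $\sigma$‑algebra $\sigma\big(\bigcup_\Delta\mathcal{B}_\Delta\big)$ — is all detail the paper omits but implicitly needs. The one substantive divergence is in the final step: the paper takes $X$ to be the set of \emph{all} maps $[0,1]\to M$ with fixed endpoints and simply asserts that $\mathcal{B}$ is generated by cylinder sets, which is really a definitional choice (the product $\sigma$‑algebra on $M^{[0,1]}$; for the genuine Borel $\sigma$‑algebra of the product topology on an uncountable index set the assertion would fail). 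You instead read $X$ as the space of continuous paths with the compact‑open topology and invoke the nontrivial fact that Borel equals cylinder there, using second countability of $M$. Both readings make the proposition true; yours proves something where the paper stipulates something, at the cost of silently shrinking the underlying set from all maps to continuous ones. If you keep your reading, you should flag that change of $X$ explicitly, since the author's later discussion (e.g.\ of discontinuous paths in the commented material and the Poisson case) relies on the larger space.
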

\begin{proof}
That \cref{inv} holds follows from the fact that $\mathcal{B}$ is generated by cylindrical sets, ie. sets of the form
\begin{equation}
    \{\gamma(t_1)\in B_1,\ldots,\gamma(t_n)\in B_n\}\;,
\end{equation}
where $B_1,\ldots,B_n$ are Borel measurable sets of $M.$
That this is an inverse system of  $\textup{Hom}(\mathcal{L}_x,\mathcal{L}_y)$–valued measure spaces follows directly from condition \ref{conditionf}.
\end{proof}
\begin{definition}
For $x,y\in M,$ we denote by $P(\gamma)\,\mathcal{D}\gamma$ the formal measure $\mu$ of \cref{invl}.
\end{definition}
The next proposition immediately follows from the definitions. 
\begin{proposition}
Let $t_1<\cdots<t_n\in (0,1)$ and $f_1,\ldots,f_n\in L^{\infty}(M).$ For $f:\{\gamma:[0,1]\to M\}\to \mathbb{C},$ let
\begin{equation}
  f_{t}(\gamma)=f(\gamma(t))\;.  
\end{equation}
Then\footnote{On the left, we are identifying an operator with its integral kernel.}
\begin{equation}\label{inde}
(Q_{f_1}\cdots Q_{f_n})(x,y)=\int_{\gamma(0)=x}^{\gamma(1)=y}f_{1,t_1}(\gamma)\cdots f_{n,t_n}(\gamma)\,P(\gamma)\,\mathcal{D}\gamma\;.
\end{equation}
\end{proposition}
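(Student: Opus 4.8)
The plan is to reduce both sides of \cref{inde} to one and the same finite–dimensional integral over $M^n$ and then match them; the whole content is the telescoping built into condition \ref{conditionf}.

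First I would treat the right–hand side. Choose a partition $\Delta=\{0=s_0<s_1<\dots<s_N=1\}$ of $[0,1]$ refining $\{t_1,\dots,t_n\}$, so that $t_i=s_{k_i}$ with $k_1<\dots<k_n$. The cylinder function $\gamma\mapsto f_1(\gamma(t_1))\cdots f_n(\gamma(t_n))$ depends on $\gamma$ only through its values on $\Delta$, hence is $\mathcal{B}_\Delta$–measurable and factors through $\pi_\Delta$. By \cref{invl} and \cref{invm} its integral against the formal measure $P(\gamma)\,\mathcal{D}\gamma$ is computed at the level of the finite measure space $(X,\mathcal{B}_\Delta,\Omega_\Delta\,d\mu)$, giving
\begin{equation}
\int_{M^{N-1}} f_1(x_{k_1})\cdots f_n(x_{k_n})\,\Omega(x,x_1)\Omega(x_1,x_2)\cdots\Omega(x_{N-1},y)\,\prod_{j=1}^{N-1}d\mu(x_j)\;.
\end{equation}
Now I would integrate out, one at a time, every variable $x_j$ whose index is not one of the $k_i$, using the relation of condition \ref{conditionf}, namely $\int_M\Omega(a,z)\Omega(z,b)\,d\mu(z)=\Omega(a,b)$; conditions \ref{orth} and \ref{bounded} supply the absolute convergence that licenses Fubini, exactly as in the boundedness proof for $P_\Omega$. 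This telescopes the product to
\begin{equation}\label{target}
\int_{M^n} f_1(z_1)\cdots f_n(z_n)\,\Omega(x,z_1)\Omega(z_1,z_2)\cdots\Omega(z_{n-1},z_n)\Omega(z_n,y)\,\prod_{i=1}^n d\mu(z_i)\;,
\end{equation}
with $z_i=x_{k_i}$. The consistency of the formal integral noted after \cref{invl} makes this independent of the chosen refinement.

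Next I would show the left–hand side also equals \cref{target}. Using $Q_{f_i}=P_\Omega M_{f_i}$ together with the kernel formula $Q_{f}(x,y)=\int_M f(z)\,\Omega(x,z)\Omega(z,y)\,d\mu(z)$ — itself the $n=1$ instance, obtained by the very reduction above — I compose the $\mathrm{Hom}(\mathcal{L}_x,\mathcal{L}_y)$–valued kernels along $\mathcal{L}_x\to\mathcal{L}_{w_1}\to\dots\to\mathcal{L}_y$,
\begin{equation}
(Q_{f_1}\cdots Q_{f_n})(x,y)=\int_{M^{n-1}} Q_{f_1}(x,w_1)\,Q_{f_2}(w_1,w_2)\cdots Q_{f_n}(w_{n-1},y)\,\prod_{j=1}^{n-1}d\mu(w_j)\;.
\end{equation}
Substituting each kernel produces, for each composition, an interior pair $\Omega(z_i,w_i)\Omega(w_i,z_{i+1})$, and integrating out $w_i$ by condition \ref{conditionf} collapses it to $\Omega(z_i,z_{i+1})$; the two end factors $\Omega(x,z_1)$ and $\Omega(z_n,y)$ survive. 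The result is precisely \cref{target}, so the two computations agree and \cref{inde} follows.

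The only genuinely delicate point is bookkeeping, not analysis: one must check that the left–to–right order of the operator product matches the time order $t_1<\dots<t_n$ of the insertions. This is forced by the conventions already fixed — $\gamma(0)=x$ is the source and $\gamma(1)=y$ the target, $\Omega$ acts on the right so that $\Omega(x,z)\Omega(z,y)$ reads as the composite $\mathcal{L}_x\to\mathcal{L}_z\to\mathcal{L}_y$, and condition \ref{conditionf} eliminates interior points in the same left–to–right manner — so $f_i$ sits at the path point $z_i$ nearest $\gamma(t_i)$ on both sides. With the ordering thus pinned down, everything else is the integrability already in hand, and the statement indeed follows immediately from the definitions.
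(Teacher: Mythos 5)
Your proposal is correct and is essentially the argument the paper intends: the paper offers no proof beyond ``immediately follows from the definitions,'' and the intended verification is exactly your reduction of both sides to the common finite--dimensional integral $\int_{M^n} f_1(z_1)\cdots f_n(z_n)\,\Omega(x,z_1)\cdots\Omega(z_n,y)\,\prod_i d\mu(z_i)$, using the cylinder--function definition of the formal measure on one side and kernel composition together with condition \ref{conditionf} on the other. The only superfluous step is passing through a general refinement $\Delta\supset\{t_1,\dots,t_n\}$ and integrating out the extra variables; taking $\Delta=\{0,t_1,\dots,t_n,1\}$ directly gives the target integral at once, with consistency across refinements already guaranteed by \cref{invm}.
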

Since \cref{inde} is independent of the values of $t_1,\ldots, t_n,$ it defines a linear map
\begin{equation}
    \bigoplus_{n=0}^{\infty}L^{\infty}(M)^{\otimes n}\to \textup{Hom}(\mathcal{L}_x,\mathcal{L}_y)\;.
\end{equation}
\section{Examples}
We'll first discuss the simplest example, given by overcomplete projective submanifolds, then we'll discuss $S^2$ from the perspective of Toeplitz quantization. We'll then survey examples coming from unitary representations of Lie groups and reproducing kernels.
\begin{remark}
Actually, the most basic example is the case of a compact manifold with a probability measure and a trivial flat connection. In this case, $\Omega$ is just the parallel transport map itself and the Hilbert space is one–dimensional (except the quantization map isn't injective). To get the example of a non–trivial flat connection one has to use the path integral of Poisson manifolds, since the fundamental groupoid needs to be used instead of the pair groupoid.
\end{remark}
\subsection{Overcomplete Projective Submanifolds}
Let $\mathcal{H}$ be a separable Hilbert space. The most natural quantizations are of overcomplete symplectic submanifolds of $\textup{P}(\mathcal{H}).$ On these spaces, the path integral is easily described and essentially no choices are required to do the quantization.
\begin{definition}\label{overcd}
We call a symplectic submanifold $M^{2n}\subset \textup{P}(\mathcal{H})$ overcomplete if there exists $C>0$ such that
\begin{equation}\label{overd}
    C\int_M q\,\omega_{\textup{FS}}^n
\end{equation}
is an orthogonal projection, where $q:M\subset \textup{P}(\mathcal{H})\xhookrightarrow{} \textup{B}(\mathcal{H})$ is the canonical embedding.
\end{definition}
If \cref{overd} is an orthogonal projection, then it is the identity on the subspace of $\mathcal{H}$ spanned by the rank–one subspaces associated to the points in $M.$ Therefore, without loss of generality we may assume that \cref{overd} is the identity. As a result, this notion of overcompleteness is consistent with that of \cref{maind} and the quantization is simply given by $q.$
\begin{remark}
Given any overcomplete projective manifold, we can generate another one by considering the time evolution under Schr\"{o}dinger's equation. In the case that $M=\mathbb{C}\textup{P}^n,$ this just results in Hamiltonian flow.
\end{remark}
\begin{proposition}\label{overcc}
Let $M\subset \textup{P}(\mathcal{H})$ be overcomplete and equipped with the pullback prequantum line bundle and Hermitian connection. Then there is a propagator $\Omega$ integrating the connection, with respect to $C\omega^n_{\textup{FS}}.$  
\end{proposition}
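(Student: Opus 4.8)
The plan is to realize the given overcomplete submanifold as a concrete instance of the \(2\to 1\) construction already carried out in the reverse direction of \cref{equivalence}, and then to identify the resulting connection \(\nabla_\Omega\) with the Fubini--Study connection. First I would invoke the remark following \cref{overcd}: after replacing \(\mathcal H\) by the closed span of the rank--one subspaces \(\{q_x\mathcal H\}_{x\in M}\), we may assume \(C\int_M q\,\omega_{\textup{FS}}^n=\mathds 1_{\mathcal H}\). Setting \(d\mu:=C\,\omega_{\textup{FS}}^n\), the triple \((M,\textup{B}(\mathcal H),q)\) then satisfies the projection and overcompleteness axioms of \cref{maind}: each \(q_x\) is a rank--one orthogonal projection, hence a minimal projection, and \(\mathds 1=\int_M q_x\,d\mu(x)\) weakly by construction.

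With the pullback tautological bundle \(\mathcal L\to M\) (whose fiber \(\mathcal L_x\) is the line \(q_x\mathcal H\subset\mathcal H\)), I would define the propagator exactly as in the reverse direction, by
\begin{equation}
v_x\,\Omega(x,y):=q_y v_x\qquad(v_x\in\mathcal L_x),
\end{equation}
where vectors in \(\mathcal L_z\) are identified with vectors in \(\mathcal H\); continuity of \(\Omega\) is immediate from continuity of \(q\). Conditions \ref{nor}--\ref{conditionf} of \cref{patho} then require no new work: \(\Omega(x,x)=\mathds 1\) follows from \(q_x^2=q_x\); the Hermitian symmetry \(\Omega(x,y)=\Omega^*(y,x)\) follows from \(q_x^*=q_x\); the strict bound \(|\Omega(x,y)|<1\) for \(x\ne y\) is Cauchy--Schwarz together with injectivity of \(q\); and the reproducing property \(\int_M\Omega(x,z)\Omega(z,y)\,d\mu(z)=\Omega(x,y)\) is the resolution of the identity \(\mathds 1=\int_M q_z\,d\mu(z)\) sandwiched between \(q_x\) and \(q_y\). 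In particular \cref{norma} gives \(\int_M|\Omega(x,y)|^2\,d\mu(y)=1\), so \(\Omega(x,\cdot)\) is an \(L^2\) unit vector for every \(x\).

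The substantive geometric point is the identification \(\nabla_\Omega=\nabla\), where \(\nabla\) is the Chern (Fubini--Study) connection of the pullback bundle. By the definition of \(\nabla_\Omega\) through the splitting \cref{split}, the section \(y\mapsto v_x\Omega(x,y)=q_y v_x\) is \(\nabla_\Omega\)-parallel at \(x\); so it suffices to show it is also \(\nabla\)-parallel at \(x\). Writing the Chern covariant derivative as \(d\) followed by fiberwise orthogonal projection, I would compute, for a curve \(\gamma\) through \(x\) with \(\dot q:=\tfrac{d}{dt}q_{\gamma(t)}\big|_0\), the quantity \(q_x\dot q\,v_x\). Differentiating \(q^2=q\) and multiplying on the left by \(q\) yields the standard identity \(q_x\dot q\,q_x=0\); since \(v_x=q_x v_x\), this gives \(q_x\dot q\,v_x=0\). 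Hence \(y\mapsto q_y v_x\) has vanishing Chern covariant derivative at \(x\), and since this holds for every \(x\in M\) and every \(v_x\in\mathcal L_x\), the horizontal lifts of the two connections coincide and \(\nabla_\Omega=\nabla\). Consequently \(\Omega\) integrates the Fubini--Study connection and \(F(\nabla_\Omega)=F(\nabla)=\omega\) is the pullback of \(\omega_{\textup{FS}}\), with respect to the measure \(C\,\omega_{\textup{FS}}^n\).

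The one place I expect genuine difficulty is condition \ref{bounded}, the uniform \(L^1\) bound \(\sup_x\int_M|\Omega(x,y)|\,d\mu(y)<\infty\), which is precisely the hypothesis \cref{extrac} needed to invoke \cref{equivalence}. When \(\textup{Vol}_\mu(M)<\infty\) it is automatic --- Cauchy--Schwarz and \cref{norma} give \(\int_M|\Omega(x,y)|\,d\mu(y)\le \textup{Vol}_\mu(M)^{1/2}\) uniformly in \(x\) --- and taking the trace of \(\mathds 1=\int_M q_x\,d\mu(x)\) shows \(\textup{dim}\,\mathcal H=\textup{Vol}_\mu(M)\), so finite volume holds exactly when \(\mathcal H\) is finite dimensional, covering the compact case such as \(\mathbb{C}\textup{P}^n\). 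For infinite dimensional \(\mathcal H\) the volume is infinite and this estimate degenerates, so there the bound must instead be extracted from decay of the overlaps \(|\Omega(x,y)|=|\langle v_x,v_y\rangle|\) away from the diagonal; establishing such decay from \cref{overcd} alone is the crux, and I would carry it as a standing integrability hypothesis (equivalently \cref{extrac}) rather than as a formal consequence of overcompleteness.
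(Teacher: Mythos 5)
Your construction of $\Omega$ is exactly the paper's ($v_x\Omega(x,y)=q_yv_x$), and your verifications of conditions \ref{nor}--\ref{conditionf} match the reverse-direction propositions of \cref{equivalence}, so the skeleton is the same. Where you genuinely diverge is the identification $\nabla_\Omega=\nabla$: the paper either takes this as the definition of the pullback connection or invokes \cref{prophol}, which requires the bundle to be holomorphic and $\Omega$ to be holomorphic with respect to $(-I,I)$. Your argument instead writes the Fubini--Study connection as $d$ followed by fiberwise projection and uses the identity $q_x\dot q\,q_x=0$ (obtained by differentiating $q^2=q$) to show that $y\mapsto q_yv_x$ is $\nabla$-parallel at $x$; since these sections are $\nabla_\Omega$-horizontal by construction, the two splittings of \cref{split} coincide. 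This is more elementary and strictly more general --- it needs no complex structure on $M$ or on $q(M)\subset\textup{P}(\mathcal{H})$, whereas \cref{prophol} is tailored to the K\"ahler setting --- at the cost of not simultaneously yielding the holomorphicity statements the paper reuses elsewhere (e.g.\ for the Bergman kernel). Your handling of condition \ref{bounded} is also more careful than the paper's: the one-line proof of \cref{overcc} silently assumes it, and as you note it is automatic exactly when $\textup{Vol}_\mu(M)=\textup{dim}\,\mathcal{H}<\infty$ (via Cauchy--Schwarz and \cref{norma}), while in the infinite-dimensional case it is a genuine integrability hypothesis on the off-diagonal decay of $|\langle v_x,v_y\rangle|$, matching \cref{extrac}; flagging it as a standing assumption rather than deriving it from \cref{overcd} is the honest reading, and it does hold in the paper's noncompact examples ($\mathbb{C}$, $\mathbb{H}$) by direct estimate.
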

\begin{proof}
\begin{equation}\label{topp}
x\,\Omega([x],[y])=q_{[y]}x
\end{equation}
is a propagator integrating the connection, with respect to $C\omega_{\textup{FS}}^n.$
\end{proof}
To prove that \cref{topp} determines the right connection, it is either by definition or the following lemma is required:
\begin{lemma}\label{prophol}
Let $(\mathcal{L},\nabla)\to (M,I)$ be a holomorphic line bundle and let $\Omega\in \Gamma(\pi_0^*\mathcal{L}^*\otimes \pi_1^*\mathcal{L})$ be the identity along the diagonal and holomorphic with respect to $(-I,I).$ Then $\nabla_{\Omega}=\nabla.$
\end{lemma}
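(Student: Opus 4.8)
The plan is to reduce the equality of connections to a pointwise statement on the diagonal and then split it into its $(0,1)$- and $(1,0)$-parts, the first being immediate from holomorphicity in $y$ and the second carrying the real content. Recall that $\nabla_\Omega$ is defined so that the horizontal lift of $X\in T_xM$ at a point $l_x\in\mathcal{L}_x$ is $(s^{(x)}_{l_x})_*X$, where $s^{(x)}_{l_x}\in\Gamma(\mathcal{L})$ is the local section $y\mapsto l_x\Omega(x,y)$; by the diagonal condition \ref{nor} we have $s^{(x)}_{l_x}(x)=l_x$, so these sections pass through every point of $\mathcal{L}$. Comparing horizontal lifts, $\nabla_\Omega=\nabla$ if and only if $\nabla s^{(x)}_{l_x}\big|_{y=x}=0$ for every $x$ and every $l_x$. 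Writing the difference $\alpha:=\nabla-\nabla_\Omega$ as a $\mathbb{C}$-valued $1$-form and using that $\nabla_\Omega s^{(x)}_{l_x}|_{y=x}=0$ by construction, it suffices to show $\alpha^{0,1}=0$ and $\alpha^{1,0}=0$, each of which reduces to computing the corresponding part of $\nabla s^{(x)}_{l_x}|_{y=x}$. Here $\nabla$ is the Hermitian (Chern) connection, as is already implicit in the hypothesis: making sense of $(-I,I)$-holomorphicity of a section of $\pi_0^*\mathcal{L}^*\otimes\pi_1^*\mathcal{L}$ requires the Hermitian metric.

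First I would dispose of the $(0,1)$-part. Since $\nabla$ is the Chern connection, $\nabla^{0,1}=\bar\partial$. Because $\Omega$ is holomorphic in the second variable (the factor carrying $+I$), for fixed $x$ the section $y\mapsto s^{(x)}_{l_x}(y)$ is holomorphic, so $\nabla^{0,1}s^{(x)}_{l_x}=\bar\partial s^{(x)}_{l_x}=0$ identically. Evaluating at $y=x$ and letting $l_x$ range over $\mathcal{L}_x$ gives $\alpha^{0,1}=0$.

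The substance is the $(1,0)$-part. I would pass to a local holomorphic frame $e$ of $\mathcal{L}$ with $h:=|e|^2$, so that the Chern connection form is $\theta=\partial\log h$. The key is to express the hypothesis in the correct frame: the natural $(-I,I)$-holomorphic frame of $\pi_0^*\mathcal{L}^*\otimes\pi_1^*\mathcal{L}$ is not $e^*(x)\otimes e(y)$ but the metric-twisted frame $h(x)\,e^*(x)\otimes e(y)$, since $\partial_x^{\mathcal{L}^*}(h\,e^*)=0$, which is precisely $\bar\partial$-closedness for the conjugate structure $-I$ (equivalently, the identification $\mathcal{L}^*\cong\overline{\mathcal{L}}$ makes $\pi_0^*\mathcal{L}^*$ holomorphic for $-I$). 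Writing $\Omega=\psi\cdot h(x)\,e^*(x)\otimes e(y)$ with $\psi$ holomorphic in $y$ and antiholomorphic in $x$, condition \ref{nor} reads $\psi(\bar x,x)\,h(x)=1$, i.e. $\psi(\bar x,x)=1/h(x)$ along the diagonal. Differentiating this in the holomorphic $x$-direction, and using that $\psi$ is antiholomorphic in $x$ so that $\partial_x$ acts only through the substitution $y=x$, yields $\partial_y\psi\big|_{y=x}=-\partial h/h^2$. Since $s^{(x)}_{e(x)}(y)=\psi(\bar x,y)\,h(x)\,e(y)$, it follows that $\partial_y\!\big(\psi(\bar x,y)h(x)\big)\big|_{y=x}=-\partial h/h=-\theta(x)$, which exactly cancels the connection-form term $g_x(x)\theta(x)=\theta(x)$; hence $\nabla^{1,0}s^{(x)}_{e(x)}\big|_{y=x}=0$ and $\alpha^{1,0}=0$.

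I expect the main obstacle to be conceptual rather than computational: correctly pinning down the holomorphic structure on $\pi_0^*\mathcal{L}^*\otimes\pi_1^*\mathcal{L}$ with respect to $(-I,I)$, which is where the Hermitian metric secretly enters through the twisted frame $h(x)e^*(x)\otimes e(y)$, and keeping straight which derivatives are holomorphic and which antiholomorphic. Once that frame is identified, the diagonal normalization $\psi(\bar x,x)=1/h(x)$ does all the work, and the cancellation against $\theta=\partial\log h$ is exactly the mechanism singling out the Chern connection as the unique metric-compatible connection of type $\nabla^{1,0}+\bar\partial$. This also explains why the conclusion holds for the coherent-state propagator of \cref{overcc}, for which the same cancellation occurs on the diagonal even though that representative is not globally $(-I,I)$-holomorphic.
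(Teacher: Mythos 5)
Your proof is correct and follows essentially the same route as the paper's: the $(0,1)$-part of $\nabla_{(0,X)}\Omega$ vanishes by holomorphicity in $y$, and the $(1,0)$-part vanishes by combining the diagonal normalization $\Omega(x,x)=\mathds{1}$ with the (anti)holomorphicity in the first variable. The only difference is presentational: the paper differentiates the diagonal constancy invariantly, writing $0=\nabla_{(X,0)}\Omega+\nabla_{(0,X)}\Omega$ and using $\nabla_{(X,0)}\Omega=0$ for $X\in T^{(1,0)}M$, whereas you carry out exactly that cancellation in a local holomorphic frame, where it appears as $\partial_y g_x\vert_{y=x}=-\partial\log h$ cancelling the Chern connection form.
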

\begin{proof}
Since $\Omega$ is holomorphic, $\nabla_{(0,X)} \Omega=0$ for any $X\in T^{(0,1)}_x M.$ Furthermore, since $\Omega$ is constant along the diagonal, for any $X\in T M\otimes\mathbb{C}$
\begin{equation}
    0=d\Omega(X,X)=\nabla_{(X,0)}\Omega+\nabla_{(0,X)}\Omega\;.
\end{equation}
For $X\in T^{(1,0)}M,$ it follows that $\nabla_{(0,X)}\Omega=0\,.$ Therefore, $\nabla_{(0,X)}\Omega=0$ for all $X\in T_x M\otimes\mathbb{C},$ implying the result.
\end{proof}
In this case, the Hilbert space is $\mathcal{H}$ (or more accurately, it is the one space spanned by $M$) and the quantization map and adjoint are as follows:
\begin{equation}
    Q_f=C\int_M fq\,\omega^n_{\textup{FS}}\;,\;\;Q^{\dagger}_A([x])=\langle x,Ax\rangle\;,
\end{equation}
where $x\in [x]$ is any normalized vector. As for when a projective manifold is overcomplete, this is the case if the Bergman kernel is constant along the diagonal:
\begin{lemma}
Suppose $(M,\Omega,I)$ is a K\"{a}hler manifold with very ample prequantum line bundle $\mathcal{L}.$ If the Bergman kernel is constant along the diagonal, then the rank–one orthogonal projection
\begin{equation}
  \Psi\mapsto \Psi(x)\frac{B(x,\cdot)}{B(x,x)}
\end{equation}
symplectically embeds $M$ as an overcomplete submanifold of $P(L^2_{\textup{hol}}(M,\mathcal{L})).$ Furthermore, the pullback of the canonical line bundle with Hermitian connection agrees with the prequantum line bundle with Hermitian connection (\cite{cahen}).
\end{lemma}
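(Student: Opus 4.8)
The plan is to recognize the displayed map as the coherent-state (equivalently, normalized Kodaira) embedding attached to the reproducing kernel $B$, and then to extract both assertions from the constant-diagonal hypothesis together with \cref{prophol} and the $2\to 1$ direction of \cref{equivalence}. First I would record the reproducing property $\langle B(x,\cdot),\Psi\rangle=\Psi(x)$ for $\Psi\in\mathcal{H}:=L^2_{\textup{hol}}(M,\mathcal{L})$. Very ampleness makes $\mathcal{L}$ base-point free, so $B(x,x)>0$ and $e_x:=B(x,\cdot)/\sqrt{B(x,x)}$ is a well-defined unit vector; the displayed map is then exactly the rank-one projection $q_x=|e_x\rangle\langle e_x|$, since $\langle e_x,\Psi\rangle e_x=\Psi(x)\,B(x,\cdot)/B(x,x)$. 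Hence $q\colon M\to\textup{P}(\mathcal{H})$ is the coherent-state map, and very ampleness is precisely the statement that it separates points and tangent directions, i.e. is an embedding. I would also note the canonical identification $\mathcal{L}_x\xrightarrow{\ \sim\ }(q^*\mathcal{O}(-1))_x$, $\ l_x\mapsto l_xB(x,\cdot)$, of $\mathcal{L}$ with the pullback of the tautological (canonical) bundle $\mathcal{H}\setminus\{0\}\to\textup{P}(\mathcal{H})$; under it the Hermitian metric inherited from $\mathcal{H}$ is $\|l_xB(x,\cdot)\|^2=|l_x|^2\,B(x,x)$.

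For overcompleteness I would test $\int_M q_x\,d\nu(x)$ diagonally: $\langle\Psi,(\int_M q_x\,d\nu)\Psi\rangle=\int_M|\langle e_x,\Psi\rangle|^2\,d\nu(x)=\int_M|\Psi(x)|^2\,d\nu(x)/B(x,x)$, which equals $\|\Psi\|^2=\int_M|\Psi(x)|^2\,\omega^n/n!$ exactly when $d\nu=B(x,x)\,\omega^n/n!$. When $B$ is constant on the diagonal, $B(x,x)\equiv c$, this is the resolution of identity $\mathds{1}=c\int_M q_x\,\omega^n/n!$, so $\tfrac1c\int_M q_x\,\omega^n/n!=\mathds{1}$ is an orthogonal projection; this is the overcompleteness of \cref{overcd} once we know $\omega^n=q^*\omega_{\textup{FS}}^n$, which the connection computation below supplies.

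The heart of the matter is the connection statement, and this is exactly where the constant-diagonal hypothesis does the work. Comparing the displayed map with $q_x\Psi(y)=\Psi(x)\Omega(x,y)$ identifies $\Omega(x,y)=B(x,y)/B(x,x)$; this satisfies the Hermiticity $\Omega(x,y)=\Omega^*(y,x)$ of \cref{patho} precisely because $B(x,y)=\overline{B(y,x)}$ and $B(x,x)\equiv c$ is constant, so that $\Omega(x,y)=B(x,y)/c$ is a bona fide propagator. The point is that the symmetric normalization $B(x,y)/\sqrt{B(x,x)B(y,y)}$ would otherwise spoil holomorphy, whereas with constant diagonal $\Omega=B/c$ is holomorphic in the second variable and anti-holomorphic in the first, hence holomorphic with respect to $(-I,I)$, and equals $\mathds{1}$ along the diagonal. \Cref{prophol} then gives $\nabla_\Omega=\nabla$, the Chern connection of the prequantum bundle. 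Since by the $2\to1$ direction of \cref{equivalence} the bundle $(\mathcal{L},\nabla_\Omega)$ is the pullback under $q$ of the canonical bundle with its Fubini–Study connection, this already proves that the pulled-back connection agrees with $\nabla$; the metrics agree up to the constant $c$ by the computation of the previous paragraph, and taking curvatures yields $q^*\omega_{\textup{FS}}=F(\nabla)=\omega$, which simultaneously closes the overcompleteness argument and exhibits the embedding as symplectic.

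The obstacle I anticipate is bookkeeping rather than conceptual. The coherent-state map $x\mapsto[B(x,\cdot)]$ is \emph{anti}-holomorphic, because $B$ is anti-holomorphic in its first slot, and the tautological bundle has negative curvature; keeping track of these two sign reversals — encoded in the $(-I,I)$ holomorphy convention of \cref{prophol} — is what must be done with care to land on $q^*\omega_{\textup{FS}}=\omega$ with the correct (positive) normalization rather than its negative, and to match $q^*\mathcal{O}(-1)$ with $\mathcal{L}$ rather than $\mathcal{L}^*$. Once the conventions are fixed, each step above is a short computation, and the identification with the prequantum data of \cite{cahen} is immediate.
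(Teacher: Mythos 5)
Your proposal is correct and follows essentially the same route as the paper: overcompleteness is extracted from the reproducing (projection) property of the Bergman kernel, and the identification of the pulled-back connection with the prequantum connection is reduced to \cref{prophol} via the observation that the constant diagonal makes $\Omega=B/c$ holomorphic with respect to $(-I,I)$ and equal to $\mathds{1}$ on the diagonal. The paper's own proof is a two-line citation of exactly these two facts; you have simply supplied the computations it leaves implicit.
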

\begin{proof}
This follows from the fact that the Bergman kernel has the overcompleteness property. The second part follows from \cref{prophol}.
\end{proof}
In \cite{cahen}, the Bergman kernel along the diagonal is denoted by $\theta.$ Those authors discuss consequences of it being constant. We've found that in addition, when it is constant the path integral associated to the prequantum connection exists exactly.
\\\\While the Bergman kernel isn't always constant, it is always asymptotically constant (\cite{ma}), as we will discuss in \cref{toep}. The idea of quantization then seems to be to determine an approximately sympletic, overcomplete embedding into projective space.
\\\\For the next proposition, see \cite{mar}, page 23 or \cite{cahen}, example 1.
\begin{proposition}
The Bergman kernel of a homogeneous K\"{a}hler manifold whose action lifts to the prequantum line bundle with Hermitian connection is constant along the diagonal.    
\end{proposition}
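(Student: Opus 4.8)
The plan is to exploit the $G$–symmetry directly, reducing the statement to the observation that the diagonal value $B(x,x)$ is an intrinsic, basis–independent quantity that is invariant under the induced unitary action. Let $G$ be the Lie group acting transitively on $M$ by K\"{a}hler biholomorphisms, and let $\tilde g:\mathcal{L}\to\mathcal{L}$ be the assumed lift, a connection– and metric–preserving bundle automorphism covering the action of $g$ on $M$; write $\tilde g_x:\mathcal{L}_x\to\mathcal{L}_{gx}$ for its fiberwise restriction. First I would verify that $(U_g\Psi)(x)=\tilde g(\Psi(g^{-1}x))$ defines a unitary operator on $L^2_{\textup{hol}}(M,\mathcal{L})$: unitarity holds because each $\tilde g_x$ is a fiberwise isometry and $g$ preserves the Liouville volume $\omega^n$ (being a K\"{a}hler isometry), and the subspace of holomorphic sections is preserved because $\tilde g$, preserving the Chern connection, is a holomorphic bundle map over a biholomorphism.

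The second step is to record that the diagonal value $B(x,x)=\sum_i\|\Psi_i(x)\|^2_{\mathcal{L}_x}$ (for any orthonormal basis $\{\Psi_i\}$ of $L^2_{\textup{hol}}(M,\mathcal{L})$) is a basis–independent positive scalar: it is the squared Hilbert–Schmidt norm of the evaluation functional $\textup{ev}_x:\Psi\mapsto\Psi(x)$, which makes no reference to a choice of basis. Here we use that $\mathcal{L}_x$ is one–dimensional, so $\textup{ev}_x$ has rank at most one and $B(x,x)$ is genuinely a number, exactly as it is used in the preceding lemma. The assertion of the proposition is then precisely that $x\mapsto B(x,x)$ is $G$–invariant.

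For the key step I would compute $B(gx,gx)$ using the fiber isometry. Since $(U_g^{-1}\Psi_i)(x)=\tilde g^{-1}(\Psi_i(gx))$ and $\tilde g^{-1}:\mathcal{L}_{gx}\to\mathcal{L}_x$ is an isometry, we get $\|\Psi_i(gx)\|_{\mathcal{L}_{gx}}=\|(U_g^{-1}\Psi_i)(x)\|_{\mathcal{L}_x}$, hence
\[
B(gx,gx)=\sum_i\|\Psi_i(gx)\|^2=\sum_i\|(U_g^{-1}\Psi_i)(x)\|^2.
\]
As $U_g$ is unitary, $\{U_g^{-1}\Psi_i\}_i$ is again an orthonormal basis, so by the basis–independence of the second step the right–hand side equals $B(x,x)$. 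Transitivity of $G$ on $M$ then forces $B(x,x)$ to be constant. Equivalently, one may first deduce the full equivariance $B(gx,gy)=\tilde g_y\,B(x,y)\,\tilde g_x^{-1}$ from $U_gPU_g^{-1}=P$, where $P$ is the Bergman projection, and specialize to $y=x$.

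The routine checks of the first paragraph are the only place where care is genuinely needed; this is the main (and fairly minor) obstacle. I expect the symmetry argument itself to be immediate once it is established that $U_g$ is unitary and preserves holomorphy, and once $B(x,x)$ has been identified with the intrinsic norm $\|\textup{ev}_x\|^2$. No compactness of $M$ is needed beyond ensuring that $L^2_{\textup{hol}}(M,\mathcal{L})$ is nontrivial and admits a reproducing kernel, which is built into the standing hypotheses.
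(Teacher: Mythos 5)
Your argument is correct and is essentially the standard one: the paper itself gives no proof, deferring to \cite{mar} and \cite{cahen}, and those references establish the claim exactly as you do, via the induced unitary action on $L^2_{\textup{hol}}(M,\mathcal{L})$, the basis–independence of $B(x,x)=\|\textup{ev}_x\|^2$, and transitivity. The only point worth adding is that even when the lift is merely projective (a phase ambiguity in $\tilde g$), the argument survives unchanged, since $B(x,x)$ is insensitive to multiplying $\tilde g_x$ by a unit scalar.
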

For example, the previous proposition applies to $\mathbb{C}\textup{P}^n, \textup{T}^*\mathbb{R}^n,$ Siegel upper half space.
\begin{remark}\label{witten}
The Bergman kernel is exactly of the form of equation (2.8) in \cite{brane}, and indeed it is equal to a delta function, but only on the physical Hilbert space. 
\\\\The symmetry problem raised by the authors only suggests that the path integral can't be defined uniquely. We circumvent this issue by defining the entire set of sections $\Omega$ which formally determine the path integral. 
\\\\The final point brought up by the authors is that the path integral doesn't have a useful perturbation theory. This can be observed in the following example. The perturbation series of $\Omega(x,y)$ in $\hbar$ is zero, except at $x=y$ where it's $1.$ However, after integrating $\Omega$ against smooth functions and sections we do get a non–zero perturbation series, eg. a formal deformation quantization.
\end{remark}
\subsubsection{The 2–Sphere}
In the following example we will compute the propagator of $S^2$ and the induced non-commutative product on the space of polynomial functions of degree $\le n.$  We will do this example from the perspective of Toeplitz quantization. This is a special case of overcomplete projective manifolds. First we'll compute $\Omega,$ then we'll compute the noncommutative product, then we'll show that for $n=1$ the map $Q^{\dagger}$ agrees with the usual identification of the Pauli matrices with the coordinate functions on $S^2\xhookrightarrow{}\mathbb{R}^3.$
\begin{exmp}
Let $M=S^2$ with symplectic form $n\omega,$ where $\omega$ is the Fubini–Study form. In the standard trivialization on the complement of the north pole,
\begin{equation}
    n\omega=ni\frac{dz\wedge d\bar{z}}{(1+|z|^2)^2}\;.
\end{equation}
\begin{enumerate}
    \item[$\mathbf{\Omega:}$] The Hermitian metric of the fiber of the prequantum line bundle $\mathcal{L}\to S^2$ over $z$ is given by 
\begin{equation}
    \langle \lambda,\beta\rangle_z=\frac{\bar{\lambda}\beta}{(1+|z|^2)^n}\;,
\end{equation}
and an orthonormal basis for holomorphic sections is given by 
\begin{equation}\label{basis}
\Psi_k(z)=\sqrt{\frac{n+1}{2\pi n}{n\choose k}}\,z^k\;,\;\;k=0,\ldots, n\;.
\end{equation}
The Bergman kernel is given by
\begin{align}
     B(w,z)=\sum_i \Psi_i^*(w)\otimes\Psi_i(z)=\frac{n+1}{2\pi n(1+|w|^2)^n}\sum_{i=0}^n {n\choose k}(\bar{w}z)^k=\frac{n+1}{2\pi n}\bigg(\frac{1+\bar{w}z}{1+|w|^2}\bigg)^n\;.
\end{align}
It is constant along the diagonal, so \cref{prophol} implies that the path integral is equal to
\begin{align}
    \Omega(w,z)=\frac{B(w,z)}{B(w,w)}=\bigg(\frac{1+\bar{w}z}{1+|w|^2}\bigg)^n\;.
    \end{align}
The projection $q_w$ is given by the projection onto the normalized section (which we'll also denote by $q_w$)
\begin{align}
    q_w(z)=\sqrt{\frac{n+1}{2\pi n}}\Omega(w,z)=
    \frac{1}{\sqrt{(1+|w|^2)^n}}\sum_{k=0}^n\sqrt{{n \choose k}}\bar{w}^k \Psi_k(z)\;.
\end{align}
\item[$\mathbf{Q^{\dagger}:}$] Using \cref{basis} to make the identification 
\begin{equation}
    B(\Gamma_{hol}(\mathcal{L}))\cong M_{n\times n}(\mathbb{C})\;,
    \end{equation} 
the embedding 
\begin{equation}\label{emb}
   Q^{\dagger}: B(\Gamma_{hol}(\mathcal{L}))\xhookrightarrow{}C^{\infty}(\mathbb{C}\textup{P}^1)\;,\;\; Q^{\dagger}_A(z)=\langle q_z,Aq_z\rangle\;.
\end{equation}
is given by
\begin{equation}
    Q^{\dagger}_A(z)=\frac{1}{(1+|z|^2)^{n}}\sum_{j,k=0}^n \sqrt{{n\choose j}{n\choose k}}z^j\bar{z}^k A_{jk}\;.
\end{equation}
The image of this map is 
\begin{equation}
   \frac{1}{(1+|z|^2)^n}P_n(z,\bar{z})\;,
\end{equation}
where $P_n(z,\bar{z})$ is the space of polynomials that are at most degree $n$ in each of $z,\bar{z}.$ This space of functions is equal to $P_n(x_1,x_2,x_3),$ the set of polynomials in $x_1, x_2, x_3$ that have degree $\le n,$ where $x_1,x_2,x_3$ are the coordinate functions on $\mathbb{C}\textup{P}^1\cong S^2\subset \mathbb{R}^3.$ The identification of these two spaces of functions is given by
\begin{equation}
x_1+ix_2=\frac{2z}{1+|z|^2}\;,\;\;x_3=\frac{-1+|z|^2}{1+|z|^2}\;.
\end{equation}
The noncommutative product inherited by $P_n(x_1,x_2,x_3)$ is such that
\begin{equation}
p_1 *_{1/n} p_2=p_1p_2+\frac{i}{2n}\big(\{p_1,p_2\}-ig^{-1}(X_{p_1},X_{p_2})\big)+\mathcal{O}(1/n^2)\;,
\end{equation}
where $g$ is the K\"{a}hler metric and $X_{p_1},X_{p_2}$ are the respective Hamiltonian vector fields. Explicitly, if we write such a function as 
\begin{equation}
\frac{a(z,\bar{z})}{(1+|z|^2)^n}\;,\;\;a(z,\bar{z})=\sum_{j,k=0}^{n}\sqrt{{n\choose j}{n\choose k}}a_{jk} z^j\bar{z}^k\;,
\end{equation}
then the product is given by
\begin{equation}
\frac{a(z,\bar{z})}{(1+|z|^2)^n}\ast_{1/n}\frac{b(z,\bar{z})}{(1+|z|^2)^n}=\frac{c(z,\bar{z})}{(1+|z|^2)^n}\;,\;\;c_{jk}=\sum_{i=0}^n a_{ji} \,b_{ik}\;.
\end{equation}
One can check that, for $\{i,j,k\}=\{1,2,3\},$ 
\begin{align}
& x_i\ast_{1/n} x_i=x_i^2+\frac{1}{n}(x_j^2+x_k^2)\;,
\\&(x_1+ix_2)\ast_{1/n}(x_1-ix_2)=x_1^2+x_2^2+\frac{1}{n}(1+x_3)^2\;,
\\& (x_1-ix_2)\ast_{1/n}(x_1+ix_2)=x_1^2+x_2^2+\frac{1}{n}(1-x_3)^2\;.
\end{align}
\item[$\mathbf{\sigma:}$] For $n=1,$ the map $A\mapsto Q^{\dagger}_A$ gives the standard identification of the Pauli matrices with the coordinate functions on $S^2$:
\begin{align}\label{pauli}
    \begin{pmatrix}
        0 & 0\\
        1 & 0
    \end{pmatrix}\mapsto x_1+ix_2\;,\;\;
    \begin{pmatrix}
        0 & 1\\
        0 & 0
    \end{pmatrix}\mapsto x_1-ix_2\;,\;\;
   \begin{pmatrix}
        -1 & 0\\
        0 & 1
    \end{pmatrix}\mapsto x_3\;.
\end{align}
These matrices are commonly denoted $\sigma_{+},\,\sigma_-,\,\sigma_z,$ respectively.
\\\\Regarding \cref{witten}, $\Omega(w,z)$ is of the form $f(w,z)^{1/\hbar}$ for $\hbar=1/n,$ where $f$ is such that $f(z,z)=1,\, |f(w,z)|<1$ for $w\ne z.$ Since 
\begin{equation}
    |x|<1\implies \frac{x^{1/\hbar}}{\hbar^k}\xrightarrow[]{\hbar\to 0}0 
\end{equation}
for all $k>0,$ it follows that the perturbation series of $\Omega$ vanishes for $w\ne z.$ 
\end{enumerate}
\end{exmp}
\subsection{Examples From Unitary Representations}
The following construction is dual to Kirillov's orbit method (see chapter 5 of \cite{klauder4}). Let $G$ be a compact Lie group (compactness can be relaxed). Let
\begin{equation}
    \pi:G\to U(\mathcal{H})
\end{equation}
be an irreducible unitary representation. Let $w\in \mathcal{H}$ be normalized and let $H\subset G$ be the subgroup of elements which act by scalar multiplication on $w.$ Let $M=G/H$ with $d\mu=dx$ the induced left invariant measure.  
\\\\For $x\in G/H,$ let $q_x$ be the orthogonal projection onto $U(x')w,$ where $x'\in G$ is any vector in the fiber over $x,$ ie.
\begin{equation}
    q_x v=\langle U(x')w, v\rangle\, U(x')w\;.
\end{equation}
This defines an injective map
\begin{equation}
    q:G/H\to \textup{P}(\mathcal{H})\;.
\end{equation} 
The overcompleteness axiom holds due to Schur's lemma: let $g\in G,\,v\in\mathcal{H}.$  Then
\begin{align}
    & U(g)\int_{G/H}q_x \,dx\;v=\int_{G/H}U(g)q_x v\,dx=\int_{G/H}\langle U(x')w, v\rangle\, U(g)U(x')w\,dx
    \\&\nonumber =\int_{G/H}\langle U(x')w, v\rangle\, U(gx')w\,dx=\int_{G/H}\langle U(g^{-1}x')w, v\rangle \,U(x')v_0\,dx
    \\&\nonumber =\int_{G/H}\langle U(x')w, U(g)v\rangle \,U(x')w\,dx
    \nonumber=\int_{G/H}q_x \,dx\; U(g)v\;,
\end{align}
where for the fourth equality we have used that $dx$ is left–invariant, and for the fifth equality we have used that $U$ is unitary. Therefore, 
\begin{equation}
    \int_{G/H}q_x \,dx
\end{equation}
commutes with $U(g)$ for all $g\in G,$ and it follows from Schur's lemma that it is constant. We can therefore rescale $dx$ so that this constant is $1.$
\subsection{Examples From Reproducing Kernels}
Let $(M,d\mu)$ be a manifold with a Borel measure, let $(\mathcal{L},\langle\cdot,\cdot\rangle)\to M$ be a line bundle with Hermitian connection and let $\mathcal{H}\subset L^2(M,\mathcal{L})$ be a closed subspace for which
\begin{equation}
    \mathcal{L}\to\mathcal{L}_x\;,\;\;\Psi\mapsto\Psi(x)
\end{equation}
is continuous, for all $x\in M$ (eg. if $\mathcal{H}$ is finite–dimensional). For each $x\in M$ we get a bounded sesquilinear form given by
\begin{equation}
\mathcal{H}\otimes_{\mathbb{C}}\mathcal{H}\to\mathbb{C}\;,\;\;(\Psi_1,\Psi_2)\mapsto \langle\Psi_1(x),\Psi_2(x)\rangle _x\;,
\end{equation}
and the Riesz representation theorem guarantees that there is a bounded operator $q_x$ such that 
\begin{equation}
    \langle\Psi_1(x),\Psi_2(x)\rangle _x=\langle\Psi_1,q_x\Psi_2\rangle_{L^2}\;.
\end{equation}
We can rescale $q_x$ so that it is a rank–one orthogonal projection — to see this, choose a normalized vector $l_x\in\mathcal{L}_x.$ Writing $\Psi(x)=\lambda l_x$ determines a bounded linear functional $\mathcal{H}\to\mathbb{C},\,\Psi\mapsto\lambda.$ Letting $v_x$ be the vector determined by the Riesz representation theorem, we have that $q_x(v)=\langle v_x,v\rangle\,v_x.$ After rescaling $d\mu(x)$ in the inverse way as done to $q_x,$ it satisfies
\begin{equation}
\langle\Psi_1,\Psi_2\rangle_{L^2}=\int_M\langle\Psi_1,q_x\Psi_2\rangle_{L^2}\,d\mu\;,
\end{equation}
and therefore $x\mapsto q_x$ satisfies the overcompleteness axiom. The map $M\to \textup{P}(\mathcal{H})\,,\;x\mapsto q_x$ is injective if and only if the pointwise inner product of sections separates points of $M$. These can be used to quantize all compact symplectic manifolds (\cite{bor}).
\section{Path Integral Quantization of Symplectic Manifolds}
In this paper we are focusing on the non–perturbative aspects of quantization, but a complete quantization of a symplectic manifold $(M^{2n},\omega)$ includes a nice perturbation theory with respect to an $\hbar$–dependency. Using path integrals, we want an $\hbar$–family of propagators $\Omega_{\hbar}$ with respect to $d\mu_{\hbar}=\omega^n_{\hbar}\,,$ such that applying the functor from the category of path integrals to the category of abstract coherent state quantizations results in an abstract coherent state \textbf{deformation quantization}. Here,
\begin{equation}
    \hbar\omega_{\hbar}\xrightarrow[]{\hbar\to 0}C\omega
\end{equation}
for some $C>0.$ The coarsest requirement is that the cohomology class of $\hbar\log{[\Delta_{\hbar}]}$ is equal to $[\omega].$ More precisely, we need $\hbar \textup{\textup{VE}}(\log{\Delta_{\hbar}})\xrightarrow[]{\hbar\to 0}i\omega,$ or equivalently $\hbar F(\nabla_{\Omega})\xrightarrow[]{\hbar\to 0}i\omega.$\footnote{This means that we are not requiring that $\hbar F(\nabla_{\Omega})=i\omega$ exactly, which means that the embedding into projective space needn't be symplectic. In this precise sense, a quantization is an approximately sympelctic, overcomplete embedding into projective space. See \cref{afs}.}
\\\\We also need that for $x\ne y,$ $\Omega_{\hbar}(x,y)\xrightarrow[]{\hbar\to 0}0$ in such a way that
\begin{equation}
    \int_{M^2} f(y)g(z)\Omega_{\hbar}(x,y)\Omega_{\hbar}(y,z)\Omega_{\hbar}(z,x)\,\omega_{\hbar}^n(y)\omega_{\hbar}^n(z)\xrightarrow{\hbar\to 0} f(x)g(x)\;,
\end{equation}
and that the left side is smooth at $\hbar=0.$
\\\\In practice, propagators tend to come in such families and an important example is of Toeplitz quantization.
\subsection{ Berezin–Toeplitz Quantization}\label{toep}
Let $(M^{2n},\omega,I)$ be a prequantizable compact K\"{a}hler manifold with very ample prequantum line bundle $(\mathcal{L},\nabla,\langle\cdot,\cdot\rangle)\to M.$ Let $\{\Psi_{\alpha}\}_{\alpha}$ be an orthonormal basis for the space of holomorphic sections of $\mathcal{L}^k.$ The Bergman kernel $B_k$ (\cite{ma}) is a section of $\pi_0^*\mathcal{L}^{k*}\otimes\pi_1^*\mathcal{L}^k\to M\times M$ and is given by
\begin{equation}
    B_k(x,y)=\sum_{\alpha}\Psi_{\alpha}(x)^*\otimes \Psi_{\alpha}(y)\;.
\end{equation}
This is the integral kernel for the orthogonal projection onto holomorphic sections. We define a propagator with respect to the measure determined by the rescaled symplectic form
\begin{equation}
\omega_k(x)=B_k(x,x)^{1/n}\omega(x)\;,
\end{equation}
given by
\begin{equation}
    \Omega_k(x,y)=\frac{B_k(x,y)}{\sqrt{B_k(x,x)}\sqrt{B_k(y,y)}}\;.
\end{equation}
Since $B_k(x,x)^{1/n}/k \xrightarrow[]{k\to\infty} 1/\pi\,,$ 
it follows that
\begin{equation}\label{afss}
 \frac{\omega_k}{k}\to \frac{\omega}{\pi}\;.
\end{equation}
Furthermore, for any $m$ (\cite{zel})
\begin{equation}\label{afs}
   \big\|\frac{q_k^*\omega_{FS}}{k}-\omega\big\|_{C^m}=\mathcal{O}(1/k)\;.
\end{equation}
\Cref{afss}, \cref{afs} say that $q_k$ approximately symplectically embeds $M$ into projective space as an overcomplete submanifold, \cref{overcd}. The quantum operators are given by
\begin{align}
&\nonumber Q_f(x,y)=\int_{M}f(z)\Omega_k(x,z)\Omega_k(z,y)\,\omega^n_{1/k}(z)
\\&\label{norml}=\frac{1}{\sqrt{B_k(x,x)}\sqrt{B_k(y,y)}}\int_{M}f(z)B_k(x,z)B_k(z,y)\,\omega^n(z)\;.
\end{align}
These are the integral kernels of the normalized  Berezin–Toeplitz operators — the integral kernels of the usual  Berezin–Toeplitz operators act on the Hilbert space of holomorphic sections and are given by \cref{norml}, without the fraction on the outside of the integral. There is a unitary equivalence intertwining the  Berezin–Toeplitz operators with the operators \cref{norml}, given by
\begin{equation}
L^2_{\textup{hol}}(M,\mathcal{L}^k)\to\mathcal{H}_{\textup{phy}}\;,\;\;\Psi\mapsto\frac{\Psi}{\sqrt{B_k}}
\end{equation}
We have that\footnote{This holds from estimates in \cite{ma}): $B_k(x,x)$ is bounded away from $0$ in $(k,x)$ and $|B_k(x,z)|\xrightarrow[]{k\to\infty} 0$ uniformly in $z$ on the complement of any open set containing $x.$}
\begin{align}
\rho_{x}(Q_f)=Q_f(x,x)= &\int_{M}f(z)|\Omega_k(x,z)|^2\,\omega^n_{1/k}(z)
\\& \label{lim}=\frac{1}{B_k(x,x)}\int_{M}f(z)|B_k(x,z)|^2\,\omega^n(z)\xrightarrow[]{k\to\infty}f(x)\;,
\end{align}
where to take the limit in \cref{lim} we are assuming that $f$ is continuous. 
\\\\Finally, there is a unique star product $\star_{1/k}$ such that (\cite{mar})
\begin{align}
    \frac{1}{k^m}\|Q_fQ_g-Q_{f\star_{1/k}^m g}\|\to\xrightarrow{k\to\infty}0\;,
\end{align}
where $\star_{1/k}^m$ is the truncation of the star product above order $m.$ By \cref{trace}
\begin{equation}
\textup{dim}\,\mathcal{H}_{\textup{phy}}=\textup{Vol}_{\omega^n_{1/k}}(M)\;.
\end{equation}
\section{The Path Integral of Lie Algebroids}
We will define the path integral of Poisson manifolds —mirroring \cref{pman}, we begin with a definition of the path integral of Lie algebroids. Before doing this, we set things up.
\\\\
Let $\Pi_1(\mathfrak{g})\rightrightarrows M$ be the source simply connected Lie groupoid integrating $\mathfrak{g}\to M,$ let $d\mu$ be a continuously varying measure on the orbits of $\mathfrak{g}$ and let $(\mathcal{L},\nabla,\langle\cdot,\cdot\rangle)\to \Pi_1(\mathfrak{g})$ be a multiplicative line bundle with Hermitian connection (\cite{eli}). We want to define the formal path integral 
\begin{equation}\label{pathalgp}
    \Omega(g)=\int_{[\gamma]= g}P(\gamma)\,\mathcal{D}\gamma\;,
    \end{equation}
where the integral is over algebroid morphisms $T[0,1]\to \mathfrak{g}$ with homotopy class $g\in \Pi_1(M)$ and $P(\gamma)$ denotes parallel transport over $\gamma.$
\begin{definition}
We let $(G,d\mu)\rightrightarrows M$ denote a Lie groupoid with source and target maps $s,t$ and a continuously varying measure $d\mu$ along the orbits, ie. for $f\in  C_c(M),$
\begin{equation}
    O\mapsto\int_O f\,d\mu
\end{equation}
is continuous, where $O$ is a point in the space of orbits.
\end{definition}
Since the target map restricted to a source fiber surjects onto an orbit, we get the following:
\begin{definition}
$d\mu$ induces a Haar measure on $G,$ for which the source fibers of $G$ are equipped with the following $\sigma$–algebra: for each $x\in M,$ the $\sigma$–algebra of $s^{-1}(x)$ is the smallest one for which
\begin{equation}
    t_x:s^{-1}(x)\to M
\end{equation}
is measurable. If $G$ is source simply connected, we may instead take the $\sigma$–algebra of $s^{-1}(x)$ to be the smallest one for which
\begin{equation}
    \tilde{t}_x:s^{-1}(x)\to \widetilde{M}
\end{equation}
is measurable, where $\widetilde{M}$ is the universal cover of $M$ and $\tilde{t}_x$ is the lift of $t\vert_{s^{-1}(x)}:s^{-1}(x)\to M.$ We will also denote this Haar measure by $d\mu.$
\end{definition}
We denote the groupoid convolution by
\begin{equation}
\ast:\Gamma(\mathcal{L})\otimes_{\mathbb{C}}\Gamma(\mathcal{L})\to\Gamma(\mathcal{L})\;.
\end{equation}
\begin{definition}\label{involution}
There is an involution $^*:\Gamma(\mathcal{L})\to\Gamma(\mathcal{L})$
defined as follows: using the multiplicative structure of $\mathcal{L},$ for each $g\in G$ there is a natural map
\begin{equation}
    \mathcal{L}_{g}\otimes\mathcal{L}_{g^{-1}}\to\mathbb{C}
\end{equation}
since the pullback of $\mathcal{L}$ to the identity bisection is trivial. Combining this with the Hermitian metric, we get an adjoint map 
\begin{equation}
    ^*:\mathcal{L}_g\to\mathcal{L}_{g^-1}\;.
    \end{equation}
The involution is defined by $\Psi^*(g)=(\Psi(g^{-1}))^*.$
\end{definition}
\begin{definition}
Any $\Omega\in\Gamma(\mathcal{L})$ for which $\Omega\vert_M=\mathds{1}$ determines a map 
\begin{equation}
    \nabla_{\Omega}:\mathfrak{g}\to T_{M}\mathcal{L}\;,
\end{equation}
which is obtained by differentiating $\Omega$ along the source fibers at the identity bisection.
\end{definition} 
\subsection{Definition of the Propagator}
\begin{definition}
With the previous notation, let $\Omega\in\Gamma(\mathcal{L})$ be continuous and such that its restriction to each source fiber is measurable. We say that $\Omega$ is an (equal–time) propagator if
\begin{enumerate}
    \item $\Omega\vert_{M}=\mathds{1},$
    \item $|\Omega(g)|<1$ for $g\not\in \textup{Iso}(M)_0,$\footnote{$\textup{Iso}(M)_0$ is the connected component of the bundle of isotropy groups of $G.$}
    \item $\Omega^*=\Omega,$
    \item $\Omega\ast\Omega=\Omega,$
    \item $\sup_{x\in M} \int_{s^{-1}(x)}|\Omega(g)|\,d\mu(g)\,<\infty\,,$

\end{enumerate}
If $\Omega$ is smooth and $\nabla_{\Omega}=\nabla\vert_{\mathfrak{g}},$ then we say that $\Omega$ is a propagator integrating $\nabla.$
\end{definition}
Conditions 1, 3, 4 say that $\Omega$ is a normalized self–adjoint idempotent. 
\begin{definition}
We define a 2–cochain
\begin{equation}
\Delta:G^{(2)}\to\mathbb{C}\;,\;\; \Delta(g_1,g_2)=\Omega(g_1)\Omega(g_2)\Omega(g_2^{-1}g_1^{-1})
\end{equation}
which we call the 3–point function.
\end{definition}
In the following, $\textup{VE}$ is the van Est map (\cref{vanest}):
\begin{definition} 
If $\textup{VE}(\log{\Delta})=\Pi\in \Lambda^2\mathfrak{g}^*$ we say that $\Omega$ is a propagator integrating $\Pi\,.$
\end{definition}
In the previous definition, $\Pi$ is equal to the restriction of the curvature of $\nabla$ to the source fibers. When quantizing Poisson manifolds it is really $\Pi$ that we are interested in, rather than the specific connection. 
\\\\Similarly to the case of the path integral of manifolds, $\Delta$ determines a cohomology class of the local groupoid:
\begin{definition}\label{normd}
Let $U$ be a neighborhood of $M\xrightarrow[]{}G^{(2)}$ such that $\Delta\vert_U$ is nowhere zero. We define
\begin{equation}
   [\Delta]:U\to\mathbb{C}^*\;,\;\; [\Delta](g_1,g_2)=\frac{\Omega(g_1)\Omega(g_2)\Omega(g_2^{-1}g_1^{-1})}{|\Omega(g_1g_2)|^2}\;.
\end{equation}
$[\Delta]$ is a 2-cocycle on the local groupoid, valued in $\mathbb{C}^*.$ It follows that $\log{[\Delta]}$ is a 2-cocycle on the local groupoid, valued in $\mathbb{C}$ (where we choose the logarithm so that $\log{[\Delta]}\vert_M=0$), and assuming it is smooth this determines a 2-cocycle on the Lie algebroid.
\end{definition}
\begin{remark}
For Poisson manifolds $(M,\Pi),$ the first condition formally says that
\begin{equation}
    \int_{X:TD\to T^*M}\mathcal{D}X\,e^{\frac{i}{\hbar}\int_D X^*\Pi}=1\;,
\end{equation}
where the path integral is over Lie algebroid morphisms $X:TD\to T^*M$ such that $X(0)=x,$ where $0\in\partial D.$
\end{remark}
\subsection{Quantization of Lie Algebroids}
The quantization of a Lie algebroid is essentially a smoothly varying quantization of its orbits. Before defining it, we note that there is a measure on the arrows of $G$ induced by the Haar measure and the measure on $M.$ Since $\Gamma(\mathcal{L})$ acts on itself from the right, we get a $W^*$–algebra $W^*(G,\mathcal{L})$ by taking the weak–closure of $C_c(G,\mathcal{L})\xhookrightarrow{}B(L^2(G,\mathcal{L})).$ 
\begin{proposition}\label{norm}
    $\int_{s(g)=x}|\Omega(g)|^2\,d\mu(g)=1\;.$
\end{proposition}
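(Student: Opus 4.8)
The plan is to mirror the proof of \cref{norma} from the manifold (pair groupoid) case: evaluate the idempotent identity $\Omega\ast\Omega=\Omega$ at an identity arrow and recognize the resulting source-fiber integral as $\int_{s^{-1}(x)}|\Omega|^2\,d\mu$. First I would write out the groupoid convolution at an identity. For $\Psi_1,\Psi_2\in\Gamma(\mathcal{L})$ and $g\in G$ the convolution is
\[
(\Psi_1\ast\Psi_2)(g)=\int_{s^{-1}(s(g))}\Psi_1(gk^{-1})\,\Psi_2(k)\,d\mu(k),
\]
where the products $\Psi_1(gk^{-1})\Psi_2(k)$ are taken using the multiplicative structure of $\mathcal{L}$ and the integral uses the source-fiber Haar measure. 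Taking $g=x\in M\subset G$, so that $s(g)=x$ and $gk^{-1}=k^{-1}$ for $k\in s^{-1}(x)$, gives
\[
(\Omega\ast\Omega)(x)=\int_{s^{-1}(x)}\Omega(k^{-1})\,\Omega(k)\,d\mu(k).
\]
By condition 4 the left-hand side equals $\Omega(x)$, which by condition 1 is $\mathds{1}=1\in\mathcal{L}_{x}\cong\mathbb{C}$.

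The next step is to identify the integrand with $|\Omega(k)|^2$. Self-adjointness $\Omega^*=\Omega$ (condition 3) together with the definition $\Omega^*(k)=(\Omega(k^{-1}))^*$ of the involution (\cref{involution}) gives $\Omega(k^{-1})=(\Omega(k))^*$. The product $\Omega(k^{-1})\Omega(k)$ is then the image of $(\Omega(k))^*\otimes\Omega(k)$ under the natural pairing $\mathcal{L}_{k^{-1}}\otimes\mathcal{L}_k\to\mathcal{L}_{x}\cong\mathbb{C}$ supplied by the multiplicative structure, since $k^{-1}k=x$ is an identity. By the very construction of the adjoint $(\,\cdot\,)^*$ from this pairing and the Hermitian metric in \cref{involution}, this image equals $\langle\Omega(k),\Omega(k)\rangle=|\Omega(k)|^2$; this is precisely the compatibility that makes $(\,\cdot\,)^*$ and $\ast$ a $*$-algebra structure. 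Combining the two displays yields $\int_{s^{-1}(x)}|\Omega(k)|^2\,d\mu(k)=1$. Finiteness of the integral, needed to legitimize these manipulations, follows from condition 5: since the idempotent computation exhibits the integrand as the nonnegative function $|\Omega|^2$ and the dominating bound $|\Omega|\le 1$ gives $|\Omega|^2\le|\Omega|$, we have $\int_{s^{-1}(x)}|\Omega|^2\,d\mu\le\int_{s^{-1}(x)}|\Omega|\,d\mu<\infty$.

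The main obstacle is bookkeeping rather than analysis: pinning down the groupoid convolution convention and checking that evaluation at the identity $x$ genuinely produces integration over the source fiber $s^{-1}(x)$ rather than the target fiber — the two agree here by invariance of the Haar measure, but the orientation of the factorization $g=g_1g_2$ must be tracked carefully so that the surviving factor is $\Omega(k^{-1})\Omega(k)$ and not its reverse. The one piece with real content is verifying that the pairing $\mathcal{L}_{k^{-1}}\otimes\mathcal{L}_k\to\mathbb{C}$ composed with the involution reproduces the pointwise norm $|\Omega(k)|^2$ with the correct normalization; this is the groupoid analogue of the elementary identity $\Omega(x,z)\Omega(z,x)=|\Omega(x,z)|^2$ used in \cref{norma}, now routed through the multiplicative Hermitian structure of \cref{involution}.
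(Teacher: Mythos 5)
Your argument is exactly the paper's: the paper's entire proof is ``This follows from conditions 1, 3, 4,'' and your write-up simply fills in the details of evaluating $\Omega\ast\Omega=\Omega$ at an identity arrow and using $\Omega^*=\Omega$ to recognize the integrand as $|\Omega|^2$. The expansion is correct, including the identification of $\Omega(k^{-1})\Omega(k)$ with $|\Omega(k)|^2$ via the pairing in \cref{involution}.
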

\begin{proof}
    This follows from conditions 1, 3, 4.
\end{proof}
\begin{definition}
We define $\mathfrak{g}_{\hbar}$ to be the corner associated to $\Omega,$ ie.
\begin{equation}
\mathfrak{g}_{\hbar}=\Omega\ast W^*(G,\mathcal{L})\ast\Omega\;.
\end{equation}
\end{definition}
In other words, $\mathfrak{g}_{\hbar}$ consists of sections which are fixed by $\Omega$ on the right and on the left. 
\\\\The following is a general property of corners of $W^*$–algebras and follows from the definition:
\begin{proposition}
The canonical map \begin{equation}
    r:W^*(G,\mathcal{L})\to \mathfrak{g}_{\hbar}\;,\;\;r(A)= \Omega\ast A\ast\Omega
\end{equation} is $^*$–linear, continuous and fixes $\mathfrak{g}_{\hbar}.$ That is, $r$ is a $^*$–linear retraction. 
\end{proposition}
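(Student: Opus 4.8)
The plan is to recognize $\Omega$ as an orthogonal projection inside $W^*(G,\mathcal{L})$ and then invoke the standard fact that compression by a projection is an ultraweakly continuous $^*$-linear retraction onto the associated corner. First I would record that conditions 1, 3, 4 make $\Omega$ a self-adjoint idempotent: condition 4 gives $\Omega\ast\Omega=\Omega$, and condition 3 gives $\Omega^*=\Omega$, where $^*$ is the involution of \cref{involution}. Since the representation of $C_c(G,\mathcal{L})$ on $L^2(G,\mathcal{L})$ is a $^*$-representation, the convolution involution agrees with the operator adjoint, so $\Omega$ acts as a self-adjoint idempotent, i.e. an orthogonal projection, on $L^2(G,\mathcal{L})$. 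Thus $\Omega$ is a projection that left- and right-multiplies $W^*(G,\mathcal{L})$, which is exactly what makes the corner $\mathfrak{g}_{\hbar}=\Omega\ast W^*(G,\mathcal{L})\ast\Omega$ well defined.

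Granting this, the three asserted properties are purely formal. For $^*$-linearity, convolution is bilinear so $r$ is linear, and using the anti-multiplicativity of the involution, $(\Psi_1\ast\Psi_2)^*=\Psi_2^*\ast\Psi_1^*$, together with $\Omega^*=\Omega$, I get $r(A)^*=(\Omega\ast A\ast\Omega)^*=\Omega^*\ast A^*\ast\Omega^*=\Omega\ast A^*\ast\Omega=r(A^*)$. For continuity, $r$ is the composite of left multiplication by $\Omega$ with right multiplication by $\Omega$, and in any $W^*$-algebra multiplication by a fixed element is separately ultraweakly (normal) continuous; hence $r$ is ultraweakly continuous. For the retraction property, if $B\in\mathfrak{g}_{\hbar}$ then $B=\Omega\ast C\ast\Omega$ for some $C\in W^*(G,\mathcal{L})$, and associativity of convolution with $\Omega\ast\Omega=\Omega$ gives $r(B)=\Omega\ast(\Omega\ast C\ast\Omega)\ast\Omega=(\Omega\ast\Omega)\ast C\ast(\Omega\ast\Omega)=\Omega\ast C\ast\Omega=B$.

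The only genuinely analytic point, and the main obstacle, is the very first step: verifying that convolution by the (generally non-compactly supported) section $\Omega$ defines a bounded operator on $L^2(G,\mathcal{L})$ and actually lies in the weak closure $W^*(G,\mathcal{L})$, rather than merely in its multiplier algebra. The boundedness mirrors the proof that $P_{\Omega}$ is bounded in the manifold case of \cref{pman} and uses the triangle and Hölder inequalities together with conditions 3 and 5 and \cref{norm}. Membership in the weak closure can be arranged either by approximating $\Omega$ by its compactly supported truncations and passing to a weak limit, or by observing that the projection commutes with the commutant of the regular representation and hence lies in the double commutant. Everything after that is the standard corner calculus, which is why the statement follows directly from the definition.
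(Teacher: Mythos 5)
Your proof is correct and matches the paper's intent: the paper offers no argument beyond the remark that this is ``a general property of corners of $W^*$--algebras and follows from the definition,'' and your verification (self-adjoint idempotency of $\Omega$ from conditions 1, 3, 4, then the standard corner calculus for $^*$-linearity, separate ultraweak continuity of multiplication, and idempotency giving the retraction) is exactly the standard argument being invoked. Your closing observation --- that one must still check convolution by $\Omega$ is bounded on $L^2(G,\mathcal{L})$ and lands in $W^*(G,\mathcal{L})$ rather than merely its multiplier algebra --- is a genuine point the paper leaves implicit, and your proposed remedies are the right ones.
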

\begin{definition}
Let 
\begin{equation}
L^p(M):=\{f:M\to\mathbb{C}:\textup{ess} \sup{\|f\|_p}<\infty\}\;,
\end{equation}
where $\|f\|_p$ is the orbit–wise constant function given by the orbit–wise $L^p$–norm.
\end{definition}
\begin{definition}
The quantization map is given by
\begin{equation}
Q:\bigoplus_{1\le p\le\infty} L^p(M)\to \mathfrak{g}_{\hbar}\;,\;\;Q_f=r(t^*f)\;.
\end{equation}
Equivalently, $Q_f=r(s^*f).$
\end{definition}
We can quantize $x\in M$ by defining $q_x=Q_{\delta_x}$ and these do resolve the identity, but they are only continuous within the orbits. This next proposition is a simple computation.
\begin{proposition}\label{useful}
\begin{equation}
(Q_f\ast Q_h)(g)=\int_{g'g''g'''=g}f(t(g'))h(t(g''))\Omega(g')\Omega(g'')\Omega(g''')\,d\mu(g')d\mu(g'')\;.
\end{equation}  
\end{proposition}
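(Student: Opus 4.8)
The plan is to unwind the two definitions $Q_f=r(t^*f)=\Omega\ast M_{t^*f}\ast\Omega$ and $Q_h=\Omega\ast M_{t^*h}\ast\Omega$, where $M_{t^*f}$ denotes the operator of pointwise multiplication by the function $t^*f$ on $G$, viewed as an element of the ambient $W^*$–algebra, and then to exploit associativity of the groupoid convolution together with the idempotency $\Omega\ast\Omega=\Omega$ (condition $4$ in the definition of a propagator). First I would record the effect of a single factor: unwinding $\Omega\ast M_{t^*f}\ast\Omega$ as a section of $\mathcal L$, the multiplication operator simply inserts one scalar $f\circ t$ at the junction of the two propagators, so that, up to the paper's orientation conventions,
\begin{equation}
Q_f(g)=\int_{g_1g_2=g}f(t(g_1))\,\Omega(g_1)\,\Omega(g_2)\,d\mu\;,
\end{equation}
with the equivalent $s^*f$ description obtained by rewriting the base point via $s(g_1)=t(g_2)$. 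The key structural point is that $M_{t^*f}$ contributes no propagator of its own; it only deposits a scalar.

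The heart of the computation is the convolution $Q_f\ast Q_h$. By associativity of $\ast$,
\begin{equation}
Q_f\ast Q_h=\Omega\ast M_{t^*f}\ast\Omega\ast\Omega\ast M_{t^*h}\ast\Omega\;,
\end{equation}
and here I would apply condition $4$ in its pointwise form $\int_{g_1g_2=w}\Omega(g_1)\Omega(g_2)\,d\mu=\Omega(w)$ to collapse the adjacent pair $\Omega\ast\Omega=\Omega$, reducing four propagator factors to three. Unwinding the remaining chain $\Omega\ast M_{t^*f}\ast\Omega\ast M_{t^*h}\ast\Omega$ into a section, decomposing $g=g'g''g'''$ and integrating over the two free factors $g',g''$ (the third being determined), yields the asserted
\begin{equation}
(Q_f\ast Q_h)(g)=\int_{g'g''g'''=g}f(t(g'))\,h(t(g''))\,\Omega(g')\Omega(g'')\Omega(g''')\,d\mu(g')\,d\mu(g'')\;,
\end{equation}
the two multiplication operators depositing $f(t(g'))$ and $h(t(g''))$ at the two junction points, as forced by the groupoid relations $t(g_1g_2)=t(g_1)$ and $s(g_1)=t(g_2)$.

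The computation is essentially bookkeeping, so the only points requiring care are (i) correctly tracking at which base point each multiplication operator evaluates $f$ and $h$, which is dictated purely by the source/target relations and the chosen orientation of the convolution, and (ii) justifying the interchanges of integration (Fubini) and the collapse $\Omega\ast\Omega=\Omega$ inside the iterated integral. Both are controlled by the integrability hypothesis $\sup_{x}\int_{s^{-1}(x)}|\Omega|\,d\mu<\infty$ (condition $5$) together with the normalization $\int_{s^{-1}(x)}|\Omega|^2\,d\mu=1$ of \cref{norm}, which guarantee that the relevant iterated integrals are absolutely convergent and that Fubini applies. I expect the main obstacle, modest as it is for \cref{useful}, to be making the multiplication–versus–convolution interaction precise: verifying that $M_{t^*f}$ inserts exactly the scalar $f(t(\cdot))$ at the correct junction while itself contributing no propagator, so that the final expression carries precisely three factors of $\Omega$ rather than four.
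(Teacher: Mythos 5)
Your proposal is correct and is exactly the computation the paper has in mind (the paper omits it as ``a simple computation''): unwind $Q_f=\Omega\ast t^*f\ast\Omega$, collapse the adjacent pair via $\Omega\ast\Omega=\Omega$, and let the two multipliers deposit $f$ and $h$ at the junction points, with absolute convergence coming from conditions 2 and 5. The only thing to watch is the composition convention — the paper composes so that $t(g')=s(g'')$ is the first junction and $s(g'g'')=s(g')$, which is precisely what makes the arguments come out as $f(t(g'))$ and $h(t(g''))$ in the stated formula — and you have flagged and handled this correctly.
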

We now define the state map:
\begin{definition}
Let 
\begin{equation}
\rho:\mathfrak{g}_{\hbar}\to L^{\infty}(M)\;,\;\;\rho_A(x)=A(x)\;,
    \end{equation}
where we are identifying $x\in M$ with its identity arrow.
\end{definition}
The next proposition immediately follows from the definitions.
\begin{proposition}
\begin{equation}
    \rho_{Q_f}(x)=\int_{s^{-1}(x)}f(t(g))|\Omega(g)|^2\,d\mu\;.
\end{equation}
\end{proposition}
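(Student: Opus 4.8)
The plan is to reduce the statement to a single integral over a source fibre by writing out the convolution kernel of $Q_f$ and evaluating it on the unit bisection; the computation is the one behind \cref{norm}, now with the weight $f\circ t$ carried along. I would first unwind the two definitions involved. By the definition of the state map, $\rho_{Q_f}(x)=Q_f(1_x)$, the value of the section $Q_f\in\mathfrak g_{\hbar}$ on the identity arrow over $x$; by the definition of the quantization map, $Q_f=r(s^{*}f)=\Omega\ast(s^{*}f)\ast\Omega$. The multiplier $s^{*}f$ acts in the convolution algebra as the diagonal kernel supported on the unit bisection $M\hookrightarrow G$ with weight $f$, so that $(s^{*}f)\ast\Omega$ is the honest section $g\mapsto f(t(g))\,\Omega(g)$ and
\begin{equation}
Q_f(g)=\big(\Omega\ast(s^{*}f)\ast\Omega\big)(g)=\int_{g_1g_2=g}f(t(g_2))\,\Omega(g_1)\Omega(g_2)\,d\mu,
\end{equation}
the weight being attached to the interior composable point $t(g_2)=s(g_1)$. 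The same section results from $r(t^{*}f)$, which is why the two descriptions of $Q$ agree.

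I would then specialize $g=1_x$. The composability constraint $g_1g_2=1_x$ forces $g_2=g_1^{-1}$ and $s(g_2)=x$, so the double integral collapses to a single integral over the source fibre; writing $g:=g_2\in s^{-1}(x)$ and $g_1=g^{-1}$ gives
\begin{equation}
\rho_{Q_f}(x)=\int_{s^{-1}(x)}f(t(g))\,\Omega(g^{-1})\Omega(g)\,d\mu(g).
\end{equation}
It remains to identify $\Omega(g^{-1})\Omega(g)=|\Omega(g)|^2$, and this is exactly where the propagator hypotheses enter: by \cref{involution} together with the self-adjointness condition $\Omega^{*}=\Omega$ one has $\Omega(g^{-1})=\Omega(g)^{*}$, and the product $\Omega(g)^{*}\Omega(g)\in\mathcal L_{1_{s(g)}}\cong\mathbb C$ is by definition the composite of the fibrewise pairing $\mathcal L_{g}\otimes\mathcal L_{g^{-1}}\to\mathbb C$ with the Hermitian metric, hence equals $\langle\Omega(g),\Omega(g)\rangle=|\Omega(g)|^2$. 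Substituting this yields the claimed formula.

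The computation itself is routine; the only genuine point requiring care is the first step, namely justifying that the multiplier $s^{*}f$ is represented by the diagonal kernel and that sandwiching it between the two copies of $\Omega$ produces the displayed convolution kernel, with the weight sitting on the interior point rather than on an endpoint (placing it on an endpoint would instead give the pointwise value $f(x)$, which is not what one wants). Everything after that is groupoid Fubini plus the fibrewise adjoint identity, and as a sanity check the case $f\equiv 1$ collapses to $\rho_{Q_1}(x)=\int_{s^{-1}(x)}|\Omega(g)|^2\,d\mu=1$, which is precisely \cref{norm} together with $Q_1=\Omega$ and $\Omega\vert_M=\mathds 1$.
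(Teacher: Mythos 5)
Your proposal is correct and is exactly the computation the paper has in mind: the paper offers no written proof, declaring the identity "immediate from the definitions," and your unwinding of $\rho_{Q_f}(x)=Q_f(1_x)$ with $Q_f=\Omega\ast(t^*f)\ast\Omega$, the collapse of the convolution at an identity arrow to a single source-fibre integral with $g_2=g_1^{-1}$, and the identification $\Omega(g^{-1})\Omega(g)=|\Omega(g)|^2$ via the involution and $\Omega^*=\Omega$ is precisely that omitted argument. Your attention to where the weight $f\circ t$ sits (on the interior composable point) and the sanity check against \cref{norm} for $f\equiv 1$ are both apt.
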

The map $f\mapsto\rho_{Q_f}$ generalizes the Berezin transform.
\begin{definition}
We say that $\rho(M)$ has enough states if $\rho_A(x)=0$ for all $x\in M$ implies that $A=0.$
\end{definition}
In the case that $\rho(M)$ has enough states we get an embedding $\mathfrak{g}_{\hbar}\xhookrightarrow{}L^{\infty}(M).$
\begin{definition}
We let
\begin{equation}
\mathcal{H}_{S}:=\{A\in\mathfrak{g}_{\hbar}:\rho_{A^*\ast A}\in L^1(M)\}\;.
\end{equation}
\begin{definition}
We have inner products on $\mathfrak{g}_{\hbar},\,L^2(M)$ valued in 
\begin{equation}
    \{f\in L^{\infty}(M): f\textup{ is constant along orbits}\}\;,
    \end{equation}
given by
\begin{align}
    &\langle A,B\rangle_{\mathcal{H}_S}(x)=\int_{O_x}\rho_{A^*B}\,d\mu\;,
    \\&\langle f,g\rangle_{L^2}(x)=\int_{O_x} \bar{f}g\,d\mu\;,
\end{align}
where $O_x$ is the orbit containing $x.$ 
\end{definition}
\end{definition}
The next proposition can be taken to be the  defining feature of abstract coherent state quantizations of Lie algebroids with orbit–wise measures:
\begin{proposition}
$Q\vert_{L^2}(L^2(M))\subset \mathcal{H}_S$ and $\rho\vert_{\mathcal{H}_S}=Q^{\dagger}\vert_{L^2}\,,$ in the sense that for $f\in L^2(M),\, A\in\mathcal{H}_S,$
\begin{equation}
\langle A,Q_f\rangle_{\mathcal{H}_S}=\langle \rho_A,f\rangle_{L^2}\;.
\end{equation}
\end{proposition}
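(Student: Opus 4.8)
The plan is to transcribe the symplectic Lemma asserting that $Q\vert_{L^2}$ and $\rho\vert_{\mathcal{H}_S}$ are adjoint into the convolution algebra $\mathfrak{g}_{\hbar}$, with the reciprocity $\rho_x(q_yA)=\rho_y(Aq_x)$ used there replaced by the trace property of orbit-wise evaluation at units. Fix an orbit $O_x$, write $A(z)$ for the value of a section at the identity arrow over $z$ (so $\rho_A(z)=A(z)$), and set $\tau(B):=\int_{O_x}\rho_B\,d\mu$. Then $\langle A,Q_f\rangle_{\mathcal{H}_S}(x)=\tau(A^{\ast}\ast Q_f)$ while $\langle\rho_A,f\rangle_{L^2}(x)=\int_{O_x}f\,\overline{\rho_A}\,d\mu$, so the content to prove is $\tau(A^{\ast}\ast Q_f)=\int_{O_x}f\,\overline{\rho_A}\,d\mu$ (that $\rho_A\in L^2(M)$ for $A\in\mathcal{H}_S$, needed for the right-hand side, follows symmetrically from the containment estimate below). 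Two algebraic facts drive everything. First, $A\in\mathfrak{g}_{\hbar}$ means $A=\Omega\ast A\ast\Omega$, so conditions 3 and 4 ($\Omega^{\ast}=\Omega$, $\Omega\ast\Omega=\Omega$) give $A^{\ast}=\Omega\ast A^{\ast}\ast\Omega$ and the collapses $A^{\ast}\ast\Omega=\Omega\ast A^{\ast}=A^{\ast}$. Second, $\tau$ is a trace: since evaluating a convolution at a unit forces the two factors to be mutually inverse, $\tau(X\ast Y)=\int_{G\vert_{O_x}}X(h)\,Y(h^{-1})\,d\mu(h)$, and the inversion $h\mapsto h^{-1}$, which preserves the Haar measure, gives $\tau(X\ast Y)=\tau(Y\ast X)$.

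The main computation then proceeds as follows. Writing $Q_f=\Omega\ast(t^{\ast}f)\ast\Omega$ (the definition $Q_f=r(t^{\ast}f)$, with $t^{\ast}f$ acting by multiplication) and using $A^{\ast}\ast\Omega=A^{\ast}$, one gets $A^{\ast}\ast Q_f=A^{\ast}\ast\tilde{\Omega}$, where $\tilde{\Omega}:=(t^{\ast}f)\,\Omega$ is the genuine section $g\mapsto f(t(g))\,\Omega(g)$. Applying the trace property, $\tau(A^{\ast}\ast\tilde{\Omega})=\tau(\tilde{\Omega}\ast A^{\ast})$. On the fiber computing $(\tilde{\Omega}\ast A^{\ast})(z)$ one has $t(h)=z$, so $f(t(h))=f(z)$ factors out, leaving $(\tilde{\Omega}\ast A^{\ast})(z)=f(z)\,(\Omega\ast A^{\ast})(z)=f(z)\,\overline{A(z)}$ by the collapse $\Omega\ast A^{\ast}=A^{\ast}$ and $A^{\ast}(z)=\overline{A(z)}$. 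Integrating over $O_x$ gives $\int_{O_x}f\,\overline{\rho_A}\,d\mu=\langle\rho_A,f\rangle_{L^2}(x)$, as desired. Equivalently, one may expand by \cref{useful} and reduce by hand, substituting $h\mapsto h^{-1}$ to move $f$ onto the unit; the trace property simply packages this bookkeeping.

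For the containment $Q_f\in\mathcal{H}_S$ I would specialize $A=Q_f$ (so $A^{\ast}=Q_{\bar f}$ by $^{\ast}$-linearity) in the identity above to get $\langle Q_f,Q_f\rangle_{\mathcal{H}_S}(x)=\int_{O_x}\overline{\rho_{Q_f}}\,f\,d\mu$ and bound the right-hand side. Since $\rho_{Q_f}(z)=\int_{s^{-1}(z)}f(t(g))\,|\Omega(g)|^2\,d\mu$ is an average of $f$ against the probability density $|\Omega|^2$ (\cref{norm}), Cauchy–Schwarz yields $|\rho_{Q_f}(z)|^2\le\int_{s^{-1}(z)}|f(t(g))|^2|\Omega(g)|^2\,d\mu$; integrating over $O_x$ and using $\int_{t^{-1}(w)}|\Omega|^2=1$ as well (condition 3 with \cref{norm}) gives $\|\rho_{Q_f}\|_{L^2(O_x)}\le\|f\|_{L^2(O_x)}$, hence $\langle Q_f,Q_f\rangle_{\mathcal{H}_S}(x)\le\|f\|_{L^2(O_x)}^2$ uniformly in the orbit, so $\rho_{Q_f^{\ast}\ast Q_f}\in L^1(M)$.

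The hard part is analytic rather than algebraic: each use of the trace property and each interchange of the orbit integral with the fiber integrals must be justified by absolute convergence of the underlying triple-$\Omega$ integrals. I expect this to be the main obstacle, and would dispatch it exactly as in the boundedness proof for $P_{\Omega}$ — factoring $|\Omega|=|\Omega|^{1/2}|\Omega|^{1/2}$ and applying Cauchy–Schwarz, with condition 5 ($\sup_x\int_{s^{-1}(x)}|\Omega|\,d\mu<\infty$) and \cref{norm} controlling the $\Omega$-factors, while $A\in\mathcal{H}_S$ and $f\in L^2(M)$ supply the remaining square-integrability. These same estimates legitimize the formal chain when $A=Q_f$, so the containment and the adjoint identity are established together.
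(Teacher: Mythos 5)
Your proof of the adjoint identity is essentially the paper's own argument: the paper's one-line manipulation $\int_{O_x}(A^*\ast\Omega\ast t^*f\ast\Omega)(y)\,d\mu(y)=\int_{O_x}(\Omega\ast A^*\ast\Omega)(y)f(y)\,d\mu(y)=\int_{O_x}\overline{A(y)}f(y)\,d\mu(y)$ is exactly your cyclic permutation of $\tau$ at units (justified by inversion-invariance of the Haar system) combined with the corner collapse $\Omega\ast A^*\ast\Omega=A^*$; you have merely made the trace property explicit. Where you genuinely diverge is the containment $Q_f\in\mathcal{H}_S$: the paper estimates $\int_{O_x}|\rho_{Q_{\bar f}\ast Q_f}|$ head-on by expanding the triple-$\Omega$ integral via \cref{useful} and applying the triangle inequality, conditions 2 and 5, H\"older, and \cref{norm}, whereas you specialize the adjoint identity to $A=Q_f$ and then invoke the $L^2$-contractivity of the Berezin transform ($\|\rho_{Q_f}\|_{L^2(O_x)}\le\|f\|_{L^2(O_x)}$ via Jensen against the probability density $|\Omega|^2$ on source and target fibers). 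Your route is arguably cleaner as a statement, but it is mildly circular — the trace manipulation for $A=Q_f$ presupposes the very absolute convergence you are trying to establish — and, as you yourself note, discharging that presupposition requires precisely the H\"older-plus-condition-5 estimate that constitutes the paper's direct proof. So the two approaches end up resting on the same analytic input; the paper just puts the estimate first so that nothing formal needs retroactive justification.
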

\begin{proof}
Let $O_x$ be the orbit of $x.$ Then
\begin{align}
  &\int_{y\in O_x}|\rho_{Q_{f}^*\ast Q_f}|(y)\,d\mu(y)=\int_{y\in O_x} |Q_{\bar{f}}\ast Q_f|(y)\,d\mu(y)
  \\&\le\int_{y\in O_x}\int_{g'g''g'''=y}\big|\bar{f}(t(g'))f(t(g''))\Omega(g')\Omega(g'')\Omega(g''')\big|\,d\mu(g')d\mu(g'')d\mu(y)
  \\&\le C\int_{s(g''),t(g'')\in O_x}|\bar{f}(s(g''))f(t(g''))\Omega(g'')|d\mu(g'')
  \\&\le C\|f\|^2_{L^2(M)}\;,
\end{align}
where to go from the second to third line we have used \cref{useful}, the triangle inequality and conditions 2 and 5. From the third to fourth line we have used  and H\"{o}lder's inequality and \cref{norm}. Note that, $C$ is independent of $x.$
The adjoint property follows from
\begin{align}
&\int_{y\in O_x}(A^*\ast\Omega\ast t^*f\ast\Omega)(y)\,d\mu(y)=\int_{y\in O_x}(\Omega\ast A^*\ast \Omega)(y)f(y)\,d\mu(y)
\\&=\int_{y\in O_x}\widebar{A(y)}f(y)\,d\mu(y)\;,
\end{align}
where we have used that $\Omega\ast A\ast\Omega=A$ by definition of $\mathfrak{g}_{\hbar}.$
\end{proof}
\begin{remark}
Let $0,\,1,\,\infty$ are cyclically ordered points on the boundary of a disk. For a Poisson manifold $(M,\Pi),$ we formally have that 
\begin{equation}
    \rho_{Q_f\ast Q_h}(x)=\int_{X:TD\to T^*M}\mathcal{D}X\,f(X(0))h(X(1))e^{\frac{i}{\hbar}\int_D X^*\Pi}\;,
\end{equation}
where the path integral is over algebroid morphisms such that $X(\infty)=x.$ According to \cite{bon}, this is equivalent to the Poisson sigma model description of Kontsevich's star product Writing it out exactly,
\begin{equation}
    \rho_{Q_f\ast Q_h}(x)=\int_{(g_1,g_2)\in G^{(2)},\, s(g_1)=x}f(t(g_1))h(t(g_2))\Delta(g_1,g_2)\,d\mu(g_1)d\mu(g_2)\;.
\end{equation}
\end{remark}
\section{Quantization of Poisson Structures}
One way of quantizing a Poisson manifold having a dense symplectic leaf is by finding a propagator on the symplectic groupoid over the symplectic leaf that extends to the entire symplectic groupoid. We show two examples of this here, before discussing Riemann surfaces.
\subsection{Quartic Zero}
Consider the Poisson structure on $S^2$ which in coordinates on the complement of the south pole is given by 
\begin{equation}\label{quartic}
    \Pi=\frac{1}{2i}|z|^4\partial_z\wedge\partial_{\bar{z}}\;.
\end{equation}
This Poisson structure is symplectic on the complement of the north pole, $z=0.$ 
\\\\
The symplectic groupoid of \cref{quartic} is given by $T^*S^2\rightrightarrows S^2.$ In coordinates $(z,\lambda)$ and for $1-\lambda|z|^2\bar{z}\ne 0,$ the source and target maps are given by 
\begin{equation}
s(z,\lambda)=z\,,\;\;t(z,\lambda)=\frac{z}{1-\lambda|z|^2\bar{z}}\;.
\end{equation}
For arrows with source and target $z=0,$ the composition is given by vector addition, ie. 
\begin{equation}
 (0,\lambda_1)\cdot(0,\lambda_2)=(0,\lambda_1+\lambda_2)\;,
\end{equation}
and the full subgroupoid over the complement of the north pole is the pair groupoid.
\\\\The multiplicative line bundle is trivial 
and the Haar measure is given by
\begin{equation}
    d\mu=\frac{i}{2}|z|^4d\lambda\wedge d\bar{\lambda}\;.
\end{equation}
The propagator is given by
\begin{equation}
    \Omega(z,\lambda)=\frac{1}{2\pi\hbar}e^{\frac{-|\lambda|^2|z|^4+\bar{\lambda}z-\lambda\bar{z}}{4\hbar}}\;.
\end{equation}
This can be described geometrically as follows: let $\alpha\in T^*S^2,$ then
\begin{equation}
\Omega(\alpha)=e^{\frac{\Pi(\alpha,I(\alpha))}{4\hbar}}P(t\mapsto t\alpha)
\end{equation}
where $I$ is the almost complex structure and $P(t\mapsto t\alpha)$ is the parallel transport map over the curve 
\begin{equation}
  t\mapsto t\alpha\;,\;\;  t\in [0,1]\;.
\end{equation}
The $\sigma$–algebra on a source fiber over the open symplectic leaf is the Borel $\sigma$–algebra, and the $\sigma$-algebra on the source fiber over the north pole is the trivial one consisting of the empty set and the entire source fiber, with the measure of this source fiber equaling $1.$ 
\\\\On the coordinate patch containing the south pole, the resulting formal deformation quantization is given by the Wick algebra:
\begin{equation}
    f\star_{\hbar}g=\textup{prod}\circ e^{\hbar  \frac{\partial}{\partial\bar{z}}\otimes \frac{\partial}{\partial z}}f\otimes g\;.
\end{equation}
\subsection{Podle\`{s} sphere}
The Poisson manifold known as the Podle\`{s} sphere is the Poisson structure on $S^2$ given by 
\begin{equation}
\Pi=\frac{1}{2i}|z|^2(1+|z|^2)\partial_z\wedge\partial_{\bar{z}}\;.
    \end{equation}
Its geometric quantization is described in \cite{bon}. The symplectic groupoid is $T^*S^2\rightrightarrows S^2.$ The full subgroupoid over the complement of the north pole is the pair groupoid, and in coordinates on the complement of the south pole the source and target maps are 
\begin{equation}
    s(z,\lambda)=z\,,\;\;t(z,\lambda)=\frac{z}{1-\bar{\lambda}(1+|z|^2)\bar{z}}\;.
\end{equation}
In these coordinates the multiplicative line bundle is trivial. To describe the propagator, let 
\begin{equation}
   B(x,y)=\sum_{n=0}^{\infty}c_n(\bar{x}y)^n e^{\frac{1}{2\hbar}(Li_2(-|x|^2)+Li_2(-|y|^2)}\;,
\end{equation}
where $L_2(t)=-\int_0^t \frac{\log(1-t')}{t'}\,dt'$ is the dilogarithm and
\begin{equation}
    c_n=\Big(2\pi\int_0^{\infty}\frac{t^n}{\sqrt{1+t}}e^{\frac{1}{\hbar}Li_2(-t)}\Big)^{-1}\;.
\end{equation}
On the complement of the north pole, this is the Bergman kernel. This orthonormal basis was computed in \cite{bon}. The propagator is given by 
\begin{equation}
    \Omega(x,y)=\frac{B(x,y)}{\sqrt{B(x,x)}\sqrt{B(y,y)}}\;.
\end{equation}
\subsection{Constant Curvature Surfaces}
The complete simply connected surfaces of constant curvature $\le 0$ are $\mathbb{C},\, \mathbb{H}$ (we have already discussed the positive curvature case). The propagators are given by 
\begin{enumerate}
    \item $\mathbb{C}:$
  \begin{equation}
      \Omega(z_1,z_2)=e^{-\frac{|z_1|^2+|z_2|^2-2z_1\widebar{z_2}}{2\hbar}}\;,\;\;\omega_{\hbar}=\frac{i}{4\pi \hbar }dz\wedge d\bar{z}\;.
  \end{equation}
   \item $\mathbb{H}=\{z\in\mathbb{C}:y>0\}:$ 
    \begin{equation}
        \Omega(z_1,z_2)=\bigg(2i\frac{\sqrt{\textup{Im}(z_1)\textup{Im}(z_2)}}{z_1-\bar{z_2}}\bigg)^{\frac{2}{\hbar}}\;,\;\;\omega_{\hbar}=\frac{i}{4\pi \hbar }\frac{dz\wedge d\bar{z}}{\textup{Im}(z)^2}\;.
    \end{equation}
\end{enumerate}
These propagators are the integral kernels of the orthogonal projections onto square–integrable holomorphic functions. A simple computation shows that $\textup{VE}(\log{\Delta})$ is proportional to $\omega_{\hbar}.$
A direct computation shows the following:
\begin{lemma}
For both $\mathbb{C}$ and $\mathbb{H},\,$ the $\mathbb{C}^*$–valued 2-cocycle on the pair groupoid
\begin{equation}
  [\Delta](x,y,z)=\frac{\Omega(x,y)\Omega(y,z)\Omega(z,x)}{|\Omega(x,z)|^2}  
\end{equation}
 is invariant under orientation–preserving isometries, as is the map $(x,y)\mapsto |\Omega(x,y)|^2.$
\end{lemma}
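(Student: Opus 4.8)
The plan is to exploit the fact that for both $\mathbb{C}$ and $\mathbb{H}$ the propagator $\Omega(x,y)$ is a concrete explicit function, and that the orientation-preserving isometry groups are transitive enough to reduce all verifications to a finite computation. First I would recall the relevant isometry groups: for $\mathbb{C}$ these are the orientation-preserving Euclidean motions $z\mapsto e^{i\theta}z + b$ (with $\theta\in\mathbb{R}$, $b\in\mathbb{C}$), and for $\mathbb{H}$ these are the M\"obius transformations $z\mapsto \frac{az+b}{cz+d}$ with $a,b,c,d\in\mathbb{R}$ and $ad-bc=1$, i.e.\ $\mathrm{PSL}(2,\mathbb{R})$. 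The key structural observation is that both propagators have the form $\Omega(x,y)=K(x,y)^{1/\hbar}$ (up to the harmless factor of $2$ in the exponent) where $K$ is a manifestly geometric kernel, so it suffices to track how $K$ and $|\Omega|^2$ transform under the generators of each group.

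The main step is a \emph{cocycle} computation. Under an isometry $g$, I would show that $\Omega(gx,gy) = \lambda_g(x)\,\overline{\lambda_g(y)}\,\Omega(x,y)$ for some $S^1$-valued (or $\mathbb{C}^*$-valued) phase factor $\lambda_g$ depending on a single point. For $\mathbb{C}$ this is immediate: the quadratic exponent $|z_1|^2+|z_2|^2-2z_1\overline{z_2}$ is literally invariant under rotations and, under translation $z\mapsto z+b$, picks up only terms that factor as a function of $z_1$ times a function of $z_2$, hence cancel in the combination $\Delta/|{\cdot}|^2$. For $\mathbb{H}$ I would use that $\frac{\sqrt{\operatorname{Im}(z_1)\operatorname{Im}(z_2)}}{z_1-\overline{z_2}}$ transforms under $z\mapsto\frac{az+b}{cz+d}$ by multiplication by a factor of the form $(\text{phase in }z_1)\cdot(\text{phase in }z_2)$, which follows from the standard transformation laws $\operatorname{Im}\!\big(\frac{az+b}{cz+d}\big)=\frac{\operatorname{Im}(z)}{|cz+d|^2}$ and a parallel identity for the difference $z_1-\overline{z_2}$. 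Once this coboundary form is established, I observe that $[\Delta](x,y,z)=\frac{\Omega(x,y)\Omega(y,z)\Omega(z,x)}{|\Omega(x,z)|^2}$ is a product in which every per-point factor $\lambda_g$ appears together with its conjugate exactly once, so all phases cancel and $[\Delta](gx,gy,gz)=[\Delta](x,y,z)$; the same cancellation gives $|\Omega(gx,gy)|^2=|\Omega(x,y)|^2$ since $|\lambda_g|$ contributes symmetrically.

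I would organize the write-up by treating $\mathbb{C}$ and $\mathbb{H}$ separately, in each case verifying the coboundary transformation law on the generators of the isometry group (rotations and translations for $\mathbb{C}$; the generators of $\mathrm{PSL}(2,\mathbb{R})$, namely $z\mapsto z+b$, $z\mapsto a^2 z$, and $z\mapsto -1/z$, for $\mathbb{H}$), since both groups are generated by these and the coboundary property is closed under composition. The anticipated main obstacle is the $z\mapsto -1/z$ inversion on $\mathbb{H}$: here one must carefully verify that $\frac{\sqrt{\operatorname{Im}(z_1)\operatorname{Im}(z_2)}}{z_1-\overline{z_2}}$ acquires a genuine product-of-single-variable factor rather than a cross term, which requires the algebraic identity relating $(-1/z_1)-\overline{(-1/z_2)}$ to $(z_1-\overline{z_2})$ divided by $z_1\overline{z_2}$, combined with the $\operatorname{Im}$ transformation law; once that single identity is checked, the rest is bookkeeping. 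I expect no difficulty from the $1/\hbar$ exponent, since raising a coboundary to a power preserves the coboundary structure.
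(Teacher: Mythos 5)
Your proposal is correct and amounts to the paper's own (unstated) argument: the paper merely says ``a direct computation shows'' the lemma, and your coboundary organization --- establishing $\Omega(gx,gy)=\lambda_g(x)\,\overline{\lambda_g(y)}\,\Omega(x,y)$ with $|\lambda_g|=1$ (via the quadratic-exponent bookkeeping on $\mathbb{C}$ and the identities $\operatorname{Im}(gz)=\operatorname{Im}(z)/|cz+d|^2$, $gz_1-\overline{gz_2}=(z_1-\overline{z_2})/\bigl((cz_1+d)\overline{(cz_2+d)}\bigr)$ on $\mathbb{H}$) and then cancelling the unimodular factors in $[\Delta]$ and $|\Omega|^2$ --- is exactly that computation carried out cleanly; note the Möbius identity holds for all of $\mathrm{PSL}(2,\mathbb{R})$ at once, so you need not reduce to generators. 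The only point deserving a sentence in the final write-up is the branch of the $2/\hbar$ power on $\mathbb{H}$, which does not affect $|\Omega|^2$ and enters $[\Delta]$ only through the same choice already required to define $\Omega$ itself.
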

As a result, we get a universal description of quantizations of the quotients of $\mathbb{C}$ and $\mathbb{H}$ by any subgroup of orientation–preserving isometries, since $[\Delta]$ descends to the source simply connected groupoid. This includes all complete Riemann surfaces of non–positive constant curvature. Using $[\Delta]$ allows for the simplest description of the multiplicative line bundle and it arises from a change of trivialization. We explain this in the following lemma, but first we recall \cref{normd}:
\begin{equation}
  [\Delta](g_1,g_2)=\frac{\Omega(g_1)\Omega(g_2)\Omega(g_2^{-1}g_1^{-1})}{|\Omega(g_1g_2)|^2}  \;.
\end{equation}
Assuming $\Delta$ is nowhere vanishing, this is a 2-cocycle defined on all of $G^{(2)}.$
\begin{lemma}
Let $(G,d\mu)$ be a Lie groupoid with a measure along the orbits, and let $\Omega$ be a non–vanishing propagator on $\mathcal{L}\to G^{(1)}.$ Then $G^{(1)}\times \mathbb{C}\to G$ with multiplication given by
\begin{equation}
(g_1,a)\cdot(g_2,b)=(g_1g_2,ab[\Delta](g_1,g_2))\;,
\end{equation}
with the metric given by $\langle (g,a),(g,b)\rangle_g=\bar{a}b|\Omega(g)|^2$ and the propagator given by the constant $1$ is isomorphic to $(\mathcal{L},\Omega).$ 
\end{lemma}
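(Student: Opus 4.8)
The plan is to use the non-vanishing section $\Omega$ itself as a global trivializing frame for $\mathcal{L}$, and to check that, read in this frame, the metric, the partial multiplication, and the propagator become exactly the three objects in the statement. First I would define the bundle map
\begin{equation}
\phi:G^{(1)}\times\mathbb{C}\to\mathcal{L}\;,\qquad \phi(g,a)=a\,\Omega(g)\;.
\end{equation}
Because $\Omega(g)\neq 0$ for every $g$, each $\Omega(g)$ is a basis of the line $\mathcal{L}_g$, so $\phi$ is a fiberwise-linear bijection covering the identity of $G^{(1)}$, i.e.\ a line-bundle isomorphism. Pulling the Hermitian metric of $\mathcal{L}$ back along $\phi$ gives $\langle\phi(g,a),\phi(g,b)\rangle_g=\bar a b\,\langle\Omega(g),\Omega(g)\rangle_g=\bar a b\,|\Omega(g)|^2$, which is precisely the stated metric, so $\phi$ is a fiberwise isometry. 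Likewise $\phi$ sends the constant section $g\mapsto(g,1)$ to $g\mapsto\Omega(g)$, so it carries the propagator $1$ of the model bundle to $\Omega$.

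The substantive step is matching the multiplications. Using the partial multiplication $\mathcal{L}_{g_1}\otimes\mathcal{L}_{g_2}\to\mathcal{L}_{g_1g_2}$ of the multiplicative line bundle, and the fact that $\Omega(g_1g_2)$ spans $\mathcal{L}_{g_1g_2}$, I would write
\begin{equation}
\Omega(g_1)\,\Omega(g_2)=c(g_1,g_2)\,\Omega(g_1g_2)
\end{equation}
for a unique scalar $c(g_1,g_2)\in\mathbb{C}$, and then identify $c$ with $[\Delta]$. Multiplying both sides on the right by $\Omega((g_1g_2)^{-1})$ and recalling $\Delta(g_1,g_2)=\Omega(g_1)\Omega(g_2)\Omega(g_2^{-1}g_1^{-1})$ gives $\Delta(g_1,g_2)=c(g_1,g_2)\,\Omega(g_1g_2)\,\Omega((g_1g_2)^{-1})$. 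To finish I need the identity
\begin{equation}
\Omega(g)\,\Omega(g^{-1})=|\Omega(g)|^2\;,
\end{equation}
after which $c(g_1,g_2)=\Delta(g_1,g_2)/|\Omega(g_1g_2)|^2=[\Delta](g_1,g_2)$, and hence $\phi(g_1,a)\,\phi(g_2,b)=ab\,c(g_1,g_2)\,\Omega(g_1g_2)=\phi\big(g_1g_2,\,ab\,[\Delta](g_1,g_2)\big)$, exactly the twisted multiplication in the statement.

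The main obstacle is establishing $\Omega(g)\,\Omega(g^{-1})=|\Omega(g)|^2$, which is where self-adjointness of the propagator enters. By the involution of \cref{involution}, $\Omega^*=\Omega$ is equivalent to $\Omega(g^{-1})=\Omega(g)^*$, so $\Omega(g)\,\Omega(g^{-1})=\Omega(g)\,\Omega(g)^*$. The adjoint $^*:\mathcal{L}_g\to\mathcal{L}_{g^{-1}}$ is by definition the combination of the Hermitian metric with the canonical pairing $\mathcal{L}_g\otimes\mathcal{L}_{g^{-1}}\to\mathbb{C}$ (the multiplication into the trivialized identity fiber), so unwinding that definition on the diagonal yields $\Omega(g)\,\Omega(g)^*=\langle\Omega(g),\Omega(g)\rangle_g=|\Omega(g)|^2$. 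Once this convention is pinned down the remaining verifications are immediate; associativity of the twisted product, should one wish to record it, follows from $[\Delta]$ being a $2$-cocycle on the local groupoid, as already noted in \cref{normd}.
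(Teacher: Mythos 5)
Your proof is correct and takes essentially the same route as the paper: the paper's entire proof consists of exhibiting the map $(g,a)\mapsto(g,a\Omega(g))$, which is exactly your $\phi$, and the identity $\Omega(g)\Omega(g^{-1})=|\Omega(g)|^2$ that you derive from the involution is the same one the paper uses implicitly right afterward when verifying that the constant $1$ is a propagator. You have simply written out the fiberwise-isometry and multiplication-matching checks that the paper leaves to the reader.
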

\begin{proof}
The isomorphism is given by
\begin{equation}
(g,a)\mapsto (g,a\Omega(g))\;.
\end{equation}
\end{proof}
To explicitly see that $1$ is indeed a propagator,
\begin{align}
(1\ast 1)(g)=&\nonumber\int_{\{g_1g_2=g\}}[\Delta](g_1,g_2)\,d\mu=\int_{\{g_1g_2=g\}}\frac{\Omega(g_1)\Omega(g_2)\Omega(g^{-1})}{|\Omega(g)|^2}\,d\mu
\\&=\frac{\Omega(g)\Omega(g^{-1})}{|\Omega(g)|^2}=1\;.
\end{align}
For a subgroup $\Gamma$ of orientation preserving isometries, $[M/\Gamma]$ isn't necessarily a manifold. In the case that it isn't, $\Pi_1([M/\Gamma])$ is described by the double groupoid (\cite{mehta}) $\textup{Pair}\,\Gamma\ltimes \textup{Pair}\,M\rightrightarrows \Gamma\ltimes M.$
\begin{corollary}
Let $M=\mathbb{C}$ or $\mathbb{H}$ and let $\Gamma$ be a subgroup of orientation–preserving isometries. Then $(\mathcal{L},\Omega)$ descends to $\Pi_1([M/\Gamma])\;.$
\end{corollary}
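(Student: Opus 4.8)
The plan is to reduce the statement to the descent of two $\Gamma$–invariant objects, the cocycle $[\Delta]$ and the metric $(x,y)\mapsto|\Omega(x,y)|^2$, and then to reassemble the multiplicative line bundle from them via the reconstruction lemma proved just above. Concretely, that lemma identifies $(\mathcal{L},\Omega)$ over $\textup{Pair}\,M$ with the trivial bundle $\textup{Pair}\,M\times\mathbb{C}$ whose multiplication is twisted by $[\Delta]$, whose Hermitian form is $\langle(g,a),(g,b)\rangle_g=\bar{a}b\,|\Omega(g)|^2$, and whose propagator is the constant function $1$. Thus $(\mathcal{L},\Omega)$ carries no information beyond the pair $\big([\Delta],|\Omega|^2\big)$, and it suffices to show that this pair descends.

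Since $M$ is simply connected, $\textup{Pair}\,M=\Pi_1(M)$, and $\Pi_1([M/\Gamma])$ is presented by the double groupoid $\textup{Pair}\,\Gamma\ltimes\textup{Pair}\,M\rightrightarrows\Gamma\ltimes M$. The group $\Gamma$ acts diagonally on $\textup{Pair}\,M=M\times M$ and on $\textup{Pair}^{(2)}M=M^3$, and by the invariance lemma both $[\Delta]:M^3\to\mathbb{C}^*$ and $(x,y)\mapsto|\Omega(x,y)|^2$ are invariant under this action; the Haar measure $d\mu$, being built from the $\Gamma$–invariant form $\omega_\hbar$, is invariant as well. A $\Gamma$–invariant $\mathbb{C}^*$–cocycle on $\textup{Pair}\,M$ together with a $\Gamma$–invariant metric is exactly the data of the corresponding objects on the quotient groupoid: one defines the descended cocycle on a composable pair of arrows of $\Pi_1([M/\Gamma])$ by lifting the arrows to $\textup{Pair}\,M$ and evaluating $[\Delta]$, the choice of lift being immaterial by invariance. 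Feeding the descended cocycle and metric back into the reconstruction lemma produces a multiplicative line bundle with Hermitian form over $\Pi_1([M/\Gamma])$ whose constant section $1$ is the desired propagator.

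The step I expect to require the most care is verifying that the descended cocycle genuinely satisfies the $2$–cocycle identity on the enlarged groupoid, where composable pairs now involve the $\Gamma$–arrows of the action groupoid and not merely pairs in $M\times M$. This is where the double–groupoid bookkeeping of \cite{mehta} enters: a composable pair in $\Pi_1([M/\Gamma])$ is a pair of arrows each carrying both a $\textup{Pair}\,M$–component and a $\Gamma$–component, subject to a matching condition in which the $\Gamma$–translation intertwines the target of one with the source of the next, and one must check that evaluating $[\Delta]$ on any $\Gamma$–compatible lift is consistent with this composition. The verification is forced by the combination of the cocycle identity for $[\Delta]$ on $\textup{Pair}\,M$ (already established) and its $\Gamma$–invariance: translating one leg of a composable pair by an element of $\Gamma$ leaves $[\Delta]$ unchanged, so the formula is well defined, and the explicit computation above showing $1\ast 1=1$ then goes through verbatim on each source fibre of the quotient groupoid once the invariance of $d\mu$ is used. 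Assembling these checks yields the descent of $(\mathcal{L},\Omega)$ to $\Pi_1([M/\Gamma])$.
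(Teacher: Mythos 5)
Your proposal is correct and follows essentially the same route as the paper: the corollary is deduced there (implicitly, with no separate written proof) from exactly the two ingredients you use, namely the invariance of $[\Delta]$ and of $|\Omega|^2$ under orientation–preserving isometries together with the reconstruction lemma identifying $(\mathcal{L},\Omega)$ with the trivial bundle twisted by $[\Delta]$ with metric $|\Omega|^2$ and propagator $1$. Your additional care about the double–groupoid bookkeeping for $\Pi_1([M/\Gamma])$ and the invariance of $d\mu$ is a reasonable elaboration of what the paper leaves unstated.
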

If $M/\Gamma$ is prequantizable to $\mathcal{L}_0\to M/\Gamma$ then $\mathcal{L}\cong s^*\mathcal{L}_0^*\otimes t^*\mathcal{L}_0.$ It follows that there is a representation of $T(M/\Gamma)_{\hbar}$ on $L^2(M/\Gamma,\mathcal{L}_0).$
\begin{appendices}\label{app}
We'll review relevant definitions and results about groupoids. We'll start with the definition of the nerve of a groupoid and the van Est map given in \cite{Lackman2}. The definitions we use are different than the standard ones (\cite{lackman0}, \cite{weinstein1}), but they are equivalent and are more suitable for our purposes.
\\\\A basic but important example of a Lie groupoid is the pair groupoid:
\begin{definition}\label{pair}
The pair groupoid is denoted $\textup{Pair}\,M\rightrightarrows M.$ It is the unique Lie groupoid such that between any two points of $M$ is a unique arrow. 
\end{definition}
\begin{definition}
Given an integrable Lie algebroid $\mathfrak{g},$ its source simply connected groupoid is denoted $\Pi_1(\mathfrak{g}).$ In the case that $\mathfrak{g}=TM,$ we may also denote it by $\Pi_1(M).$
\end{definition}
The space of $n$-composable arrows of the pair groupoid is $\textup{Pair}^{(n)}M=M^{n+1}.$ More generally:
\begin{definition}
Let $G\rightrightarrows M$ be a Lie groupoid with source and target maps $s, t.$ We define the nerve of $G$ in degree $n$ to be
\begin{equation} 
    G^{(n)}=\underbrace{G\sideset{_s}{_{s}}{\mathop{\times}} G \sideset{_s}{_{s}}{\mathop{\times}} \cdots\sideset{_s}{_{s}}{\mathop{\times}} G}_{n \text{ times}}\,.
    \end{equation}
\end{definition}
Using this definition, the action of the symmetric group is easy to describe:
\begin{definition}
For $\sigma\in S_{n+1}$ and for $(g_1,\ldots,g_n)\in G^{(n)},$ we let
\begin{equation}
    \sigma\cdot(g_1,\ldots,g_n):=(g^{-1}_{\sigma^{-1}(0)}g_{\sigma(1)},\ldots,g^{-1}_{\sigma^{-1}(0)}g_{\sigma(n)})\,,
\end{equation}
where $g_0:=\textup{id}({s(g_1)}).$ 
\end{definition}
The result is a point in $G^{(n)}$ whose common source is $t(g_{\sigma^{-1}(0)}).$ If $\sigma$ fixes $0\in\{0,\ldots,n\}$ then $\sigma$ is just a permutation. 
\begin{definition}
Let $G\rightrightarrows M$ be a Lie groupoid and $\mathfrak{g}\to M$ its Lie algebroid. We define $n$-cochains as follows:
\begin{align}
    &C^n(G)=\{\Omega:G^{(n)}\to\mathbb{C}\}\;,
    \\&C^n(\mathfrak{g})=\{\omega:\mathfrak{g}^{\otimes n}\to\mathbb{C}\}\;.
\end{align}
\end{definition}
For the most part, we are interested in $n$-cochains that are invariant under $A_n\subset S_n.$
\begin{definition}
Let $G\rightrightarrows M$ be a Lie groupoid. We define $\mathcal{A}^n_0 G$ to be those $n$–cochains on $G$ that are invariant under $A_n\subset S_n$ and that vanish on the identity bisection.
\end{definition}
A vector $\xi\in\mathfrak{g}$ at a point $x\in M$ is a vector tangent to the source fiber of $G$ at $M.$ Given an $n$-cochain $\Omega$ and a point $x\in X,$ we can restrict 
\begin{equation}
    \Omega:\underbrace{G\sideset{_s}{_{s}}{\mathop{\times}} \cdots\sideset{_s}{_{s}}{\mathop{\times}} G}_{n \text{ times}}\to\mathbb{C}
\end{equation}to a map
\begin{equation}
    \Omega_x:\underbrace{s^{-1}(x){\mathop{\times}}  \cdots{\mathop{\times}} s^{-1}(x)}_{n \text{ times}}\to\mathbb{C}\,,
\end{equation}and it makes sense to differentiate $\Omega_x$ in each of the $n$ components independently.
\begin{definition}\label{vanest}
Let $G\rightrightarrows M$ be a Lie groupoid and $\mathfrak{g}\to M$ its Lie algebroid. For each $n\ge 1$ we define the van Est map 
\begin{equation}
    \textup{VE}:\mathcal{A}^n_0 G\to \Lambda^n\mathfrak{g}\,,\;\; \Omega\mapsto \textup{VE}(\Omega)
\end{equation}
as follows: for $\xi_1,\ldots,\xi_n\in\mathfrak{g}_x,$ we let \begin{equation}
    \textup{VE}(\Omega)(\xi_1,\ldots,\xi_n)=n!\,\xi_n\cdots\xi_1\Omega_x\;,
    \end{equation}
where $\xi_i$ differentiates $\Omega_x$ in the $ith$ component.\footnote{The $n!$ is due to the fact that the standard definition of the van Est map involves an alternating sum over permutations, but it doesn't divide by the number of permutations.}
\end{definition}
This definition of the van Est map has the advantage that it is defined as a map $\mathcal{A}^n_0 G\to \Lambda^n\mathfrak{g}.$ Using the standard definition (\cite{weinstein1}), $C^{\infty}(M)$–linearity of $\textup{VE}(\Omega)$ is a property that needs to be checked. This simplifies the proof of \cref{intev}.
\\\\This next definition is the main component of the proof of the equivalence of categories in \cref{theproof}:
\begin{definition}\label{intee}
Let $M$ be an oriented $n$-dimensional manifold, let $\omega$ be an $n$-form on $M$ and let $\textup{VE}(\Omega)/n!=\omega.$ Then given a triangulation $\Delta_M$ of $M,$ the (generalized) Riemann sum of $\omega$ is defined to be
\begin{equation}
    \sum_{\Delta\in \Delta_M}\Omega(\Delta)\;,
\end{equation}
where the sum is over all $n$-dimensional simplices.
\end{definition}
\begin{theorem}\label{intev}
Suppose that $\textup{VE}(\Omega)/n!=\omega.$ Then
\begin{equation}
\sum_{\Delta \in\Delta_M }\Omega(\Delta)\xrightarrow[]{\Delta\to 0}\int_M\omega\;,
\end{equation}  
where the limit is taken over barycentric subdivisions of any triangulation $\Delta_M.$
\end{theorem}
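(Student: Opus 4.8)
The plan is to reduce the global statement to a local one on a single top--dimensional simplex and then to analyze the $n$--jet of $\Omega$ along the diagonal, taking \cref{int} as the one--dimensional model and base case. Both $\int_M\omega$ and the generalized Riemann sum of \cref{intee} are additive over the simplices of a triangulation, and the hypothesis $\textup{VE}(\Omega)/n!=\omega$ is local, since by \cref{vanest} it only constrains the $n$--jet of $\Omega$ along the identity bisection. So I would first pass to a coordinate chart with a subordinate triangulation and reduce to proving $\sum_{\Delta}\Omega(\Delta)\to\int_\sigma\omega$ for a single oriented simplex $\sigma\subset\mathbb{R}^n$, the sum running over the simplices of the $k$--th barycentric subdivision as $k\to\infty$.

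The core is a Taylor analysis of $\Omega$ about the diagonal. For a subsimplex with ordered vertices $v_0,\ldots,v_n$ and edge vectors $e_i=v_i-v_0$, I would group the expansion of $\Omega(v_0,\ldots,v_n)$ by total homogeneity degree $d$ in the $e_i$. A degree--$d$ term contributes $O(\delta^d)$ per subsimplex, where $\delta$ is the mesh; since a subdivision of mesh $\delta$ has $O(\delta^{-n})$ subsimplices, the terms with $d>n$ sum to $o(1)$ (these are the routine remainders), only the terms with $d=n$ can survive in the limit, and the terms with $d<n$ must be shown to cancel. The degree--$n$ contribution is exactly the coefficient extracted by the van Est map, whose totally antisymmetric part the hypothesis identifies with $\omega$; this is the higher--dimensional analogue of the identity $\partial_yF(x,y)|_{y=x}=f(x)$ that drives \cref{int}.

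The heart of the argument, and the step I expect to be the main obstacle, is twofold: controlling the lower--degree ($d<n$) and non--antisymmetric parts of the $n$--jet, which a priori produce divergent $O(\delta^{d-n})$ contributions and must cancel in the barycentric limit, and then pinning down the exact proportionality constant relating the surviving antisymmetric degree--$n$ term to $\int_\sigma\omega$. The two structural inputs are the invariance of $\Omega$ under even permutations of its vertices together with its vanishing on the identity bisection, and the flag/ordering combinatorics of barycentric subdivision, under which each interior face is traversed with opposite orientations by the two adjacent subsimplices. I would argue that the even--permutation symmetry forces the offending lower--degree terms to occur in orientation--reversing pairs across the barycentric flags, so that their contributions telescope to zero, leaving only the antisymmetric top--degree term; the delicate point is that this cancellation must be run simultaneously in all $\binom{n+1}{2}$ diagonal directions, and the bookkeeping of the resulting combinatorial factor against the simplex--volume normalization and the $n!$ built into \cref{vanest} is exactly what makes the constant come out right (and is genuinely invisible in the $n=1$ case, where the factor is trivial).

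Finally I would assemble the local estimates: the top--degree antisymmetric term integrates to $\int_\sigma\omega$, the $d>n$ remainder is $o(1)$ as the mesh tends to zero, and the $d<n$ remainder vanishes by the symmetry/orientation cancellation, yielding $\sum_\Delta\Omega(\Delta)\to\int_\sigma\omega$; additivity over the original triangulation then gives the global limit. The van Est chain--map property, compatibility of the groupoid differential $\delta^*$ with $d$, can be invoked to streamline the identification of the surviving term with the closed form $\omega$ and hence with $\int_M\omega$.
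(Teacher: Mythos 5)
You should first be aware that the paper does not prove \cref{intev}: it is stated in the appendix as a review of a result imported from \cite{Lackman2}, so there is no in-paper argument to compare yours against, and your proposal has to be judged on its own terms. Its skeleton is the right one --- localize to a chart, Taylor expand $\Omega$ about the identity bisection in the edge vectors $e_i=v_i-v_0$, discard degree $d>n$, identify the degree-$n$ part with the van Est image --- and you correctly isolate the degrees $1\le d\le n-1$ as the crux. You are also right that the $n=1$ model (\cref{int}) hides the difficulty, since there the only sub-top-degree term is the constant term, which dies by the vanishing on the diagonal.

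The gap is in your mechanism for the degrees $1\le d\le n-1$. Pairing subsimplices of the barycentric subdivision so that the offending terms ``telescope'' across oppositely oriented shared faces cannot work, because these contributions can be sign-definite. Take $\Omega(v_0,\ldots,v_n)=\sum_{i<j}|v_i-v_j|^2$ in a chart: it is invariant under all permutations of the vertices, vanishes on the identity bisection, and, being quadratic, satisfies $\textup{VE}(\Omega)=0$ once $n\ge 3$, so the hypothesis holds with $\omega=0$; yet $\Omega(\Delta)\ge \operatorname{diam}(\Delta)^2\ge c_n\operatorname{vol}(\Delta)^{2/n}$ on every simplex, and since the $N=((n+1)!)^k$ simplices of the $k$-th barycentric subdivision have equal volumes, $\sum_\Delta\Omega(\Delta)\ge c_nN^{1-2/n}V^{2/n}\to\infty$ for $n\ge 3$ while $\int_M\omega=0$. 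No orientation pairing cancels a positive quantity. The conclusion is that the theorem cannot be proved from the hypotheses visible in this paper alone: the missing input is that $\Omega$ vanishes on all degenerate simplices (two arguments equal), which holds for alternating cochains and for the cochains actually used here, such as $\log{[\Delta]}$. That single condition forces every monomial of the $n$-jet of $\Omega$ at the diagonal to contain each of $e_1,\ldots,e_n$, hence to have degree at least $n$; the $d<n$ terms then vanish \emph{pointwise}, which is the real mechanism, and the degree-$n$ part collapses to the single fully mixed coefficient seen by $\textup{VE}$. Once you have that, the two points you wave at still need to be done honestly: matching the $n!$ in the hypothesis against the $1/n!$ volume of the standard simplex so the constant is $\int_M\omega$ and not $n!\int_M\omega$, and the remainder estimate, since iterated barycentric subdivision destroys shape-regularity and a crude $O(\operatorname{diam}^{n+1})$ bound per simplex, multiplied by $((n+1)!)^k$ simplices of diameter $\sim(n/(n+1))^k$, does not tend to zero.
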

\end{appendices}

\end{document}